\documentclass[11pt,a4paper]{amsart}

\usepackage[utf8]{inputenc}
\usepackage[T1]{fontenc} 
\usepackage[normalem]{ulem}
\input xy
\xyoption{all}
\usepackage[all,cmtip]{xy}
\usepackage{tikz-cd}
\usepackage{amsfonts}
\usepackage{amsthm}
\usepackage{amssymb}
\usepackage{mathrsfs}
\usepackage{amsmath}  
\usepackage{amscd}

\usepackage[svgnames,x11names,dvipsnames]{xcolor}
\usepackage[colorlinks,linkcolor=Red3,citecolor=Red3,urlcolor=Cyan3]{hyperref}
\usepackage{enumitem}
\setlist[itemize,1]{leftmargin=*,wide}
\setlist[enumerate,1]{leftmargin=*,wide,label=\upshape\bfseries\arabic*.}
\usepackage{xstring}
\usepackage[abspath]{currfile}
\usepackage[yyyymmdd]{datetime}

\newtheorem{theorem}{Theorem}[section]

\newtheorem{lemma}[theorem]{Lemma}
\newtheorem{proposition}[theorem]{Proposition}
\theoremstyle{definition}
\newtheorem{definition}[theorem]{Definition}
\newtheorem{example}[theorem]{Example}
\theoremstyle{remark}
\newtheorem{remark}[theorem]{Remark}

\numberwithin{equation}{section}

\newcommand{\GL}{\operatorname{Gl}}
\DeclareMathOperator{\Spec}{Spec}

\DeclareMathOperator{\Iso}{Iso}
\DeclareMathOperator{\Res}{Res}
\DeclareMathOperator{\Ker}{Ker}
\DeclareMathOperator{\Dim}{dim}

\DeclareMathOperator{\rk}{wt_{rk}}

\DeclareMathOperator{\Aut}{Aut}
\DeclareMathOperator{\diag}{diag}

\DeclareMathOperator{\ord}{ord}

\newcommand{\Mat}{\operatorname{Mat}}
\newcommand{\wt}{\operatorname{wt}}
\newcommand{\fm}{\mathfrak{m}}
\newcommand{\ev}{\operatorname{ev}}

\newcommand{\Supp}{\operatorname{Supp}}

\newcommand{\im}{\operatorname{Im}}
\newcommand{\F}{\mathbb{F}}

\makeatletter
\newcommand*{\doublerightarrow}[2]{\mathrel{
		\settowidth{\@tempdima}{$\scriptstyle#1$}
		\settowidth{\@tempdimb}{$\scriptstyle#2$}
		\ifdim\@tempdimb>\@tempdima \@tempdima=\@tempdimb\fi
		\mathop{\vcenter{
				\offinterlineskip\ialign{\hbox to\dimexpr\@tempdima+1em{##}\cr
					\rightarrowfill\cr\noalign{\textrm{Ker}n.5ex}
					\rightarrowfill\cr}}}\limits^{\!#1}_{\!#2}}}

\begin{document}

\title{Differential Goppa codes}

%    Information for first author
\author{D. Gonz\'alez Gonz\'alez}
%    Address of record for the research reported here
\address{Departamento de Matem{\'a}ticas, Universidad de Salamanca}
%    Current address
%\curraddr{Departamento de Matem{\'a}ticas, Universidad de Salamanca}
\email{davidgonzalezgonzalez1.7@usal.es}
%    \thanks will become a 1st page footnote.
\thanks{The first author is supported by a predoctoral contract funded by the Consejer\'ia de Educaci\'on of the Junta de Castilla y Le\'on and cofounded by the European Social Fund Plus (FSE$+$)}

%    Information for second author
\author{A. L. Mu{\~n}oz Casta{\~n}eda}
\address{Departamento de  Matem{\'a}ticas, Universidad de Le{\'o}n}
\email{amunc@unileon.es}
%\thanks{The second author was supported  by ......}

\author{L. M. Navas Vicente}
\address{Departamento de Matem{\'a}ticas, Universidad de Salamanca}
\email{navas@usal.es}

\thanks{The three authors are supported  by the project PID2023-150787NB-I00, funded by MICIU /AEI /10.13039/501100011033 /FEDER, UE}

%    General info
\subjclass[2020]{11T71, 14G50, 94B27, 14F10, 13N10}
\keywords{Algebraic-geometric codes; Jet bundles; Hasse-Schmidt derivatives; Algebraic-differential calculus}

%\date{\today}

%\dedicatory{This paper is dedicated to ......}

\maketitle

%\leavevmode
%%

%%%%%%%%%%%%%%%%%%%%%%%%%%%%%%%%%%%%%
%%%%%%%%%%%%%%%%%%%%%%%%%%%%%%%%%%%%%
%%%%%%%%%%%%%%%%%%%%%%%%%%%%%%%%%%%%%
%%%%%%%%%%%%%%%%%%%%%%%%%%%%%%%%%%%%%

%\begin{description}
%
%\item[\sffamily Archivo fuente] {\sffamily\textcolor{Red3}{\currfilename}}
%
%\item[\sffamily Última modificación] {\sffamily\textcolor{Red3}{\today:\currenttime}.}
%
%\end{description}

\begin{abstract}
Rosenbloom and Tsfasman, in their foundational work on the $m$-metric, introduced algebraic-geometric codes defined by multiple points on a smooth projective curve $X$. This construction involves a divisor $G$ and another divisor $D=\sum n p_i$, where $p_i$ are distinct rational points with $p_i \notin \text{supp}(G)$ and $n\in\mathbb{N}$. Although these codes are significant, their formal development for arbitrary genus remains incomplete in the literature, as most studies have concentrated on the genus $0$ case.

We present a rigorous treatment of this class of codes. Starting with a smooth projective curve $X$, an invertible sheaf $L$, and an effective divisor $D=\sum n_i p_i$ where the $n_i$ are not necessarily equal, as well as tuples of uniformizers $t_D$ at the points of $D$ and trivializations $\gamma_D$ for the localizations $L_{p_i}$, the associated differential Goppa code is defined. This code arises from the theory of $n$-jets of invertible sheaves on curves, which enables the description of codewords using Hasse-Schmidt derivatives of sections of $L$.

The variation of the code under changes in the data $(t_D, \gamma_D)$ is examined, and the group acting on these parameters is described. The behavior of the minimum Hamming distance under such variations is analyzed, with explicit examples provided for curves of genus $0$ and $1$. A duality theorem is established, involving principal parts of meromorphic differential forms. It is demonstrated that Goppa codes constitute a proper subclass of differential Goppa codes, and that every linear code admits a differential Goppa code structure on $\mathbb P^1$ using only two rational points.
\end{abstract}

\tableofcontents{}

%%%%%%%%%%%%%%%%%%%%%%%%%%%%%%%%%%%%%%%%%%%
%%%%%%%%%%%%%%%%%%%%%%%%%%%%%%%%%%%%%%%%%%%

%-------------------------------------------------
\section{Introduction}

Classical algebraic-geometric codes over finite fields are constructed by evaluating global sections of invertible sheaves at a set of distinct rational points on a smooth projective curve. This construction gives rise to geometric Goppa codes and constitutes one of the most fruitful links between algebraic geometry and coding theory; see, for example, \cite{Goppa1983,TsfasmanBasic, Stichtenoth}. In this framework, each evaluation point has multiplicity one, so the encoded information is described only by the values of the sections at the fixed points.

However, from a geometric perspective, it is natural to consider effective divisors with arbitrary multiplicities. When a point appears with a multiplicity greater than one, the local structure of the curve suggests that the relevant information should no longer be limited to the value of a section, but should also incorporate its higher-order data. This leads immediately to the language of jets, sheaves of principal parts, and Hasse--Schmidt derivatives (see \cite{EGAIV,Laksov1984,LaksovThorup1991,Laksov1994,PerkinsonCurves,Piene1977}). From this viewpoint, the generalization of classical evaluation codes consists of replacing simple pointwise evaluation with an infinitesimal evaluation whose order is controlled by the multiplicities of the divisor.

The emergence of such ideas in coding theory can be traced back to the work of Rosenbloom and Tsfasman on the $m$-metric \cite{RT}, 
where the presence of ordered blocks reflects precisely the relevance of higher-order local information. 
Nevertheless, the explicit developments that have appeared in the literature have essentially concentrated on the genus-zero case; that is, on constructions over the projective line. To the authors’ knowledge, the only exception is \cite{CanMontero2025}, where the Bottleneck metric is studied over curves of arbitrary genus.
In that context, multiple evaluations recover and extend Reed--Solomon-type families through derivatives or hyperderivatives, giving rise to so-called multiplicity codes and affine variants \cite{Bhandari2024, CanHorowitz2025}.

The genus-zero case has demonstrated that these constructions possess both structural and algorithmic interest. 
In particular, they have yielded list decoding results with near-optimal parameters for univariate codes with multiplicities \cite{Kopparty2012, Guruswami2013, Goyal2024}; they have led to high-rate families with locality properties and sublinear decoding \cite{Kopparty2014, Wu2015}; and they have motivated efficient systematic encoding algorithms \cite{Coxon2019}. 
More recently, extensions based on hyperderivatives and divided differences have been studied, continuing to operate essentially within the univariate framework corresponding to $\mathbb{P}^1$ \cite{Bhandari2024, CanHorowitz2025}. 
This development highlights that considering multiplicities at evaluation points is not a minor refinement, but a source of new families of codes with distinctive properties.

The aim of this work is to introduce and study a geometric generalization of these constructions for smooth projective curves of arbitrary genus. 
Given a geometrically irreducible smooth projective curve $X$ defined over $\mathbb{F}_q$, an invertible sheaf $ L $ over $X$, and an effective divisor $D=\sum_{i=1}^s n_i p_i$.
We define a linear code associated with the evaluation of jets of global sections of $L$ along $D$. 
This construction depends explicitly on the choice of local uniformizers $t_D$ at the points in the support of $D$ and on suitable trivializations $\gamma_D$ of $L$ at those points. 
Concretely, the codewords of the code $C(X,L,D,t_D,\gamma_D)$ associated with these data are described by the Hasse--Schmidt derivatives of the sections with respect to these local parameters. When all multiplicities are equal to one, the classical situation of geometric Goppa codes is  recovered.
Moreover, while the minimum distance with respect to the Rosenbloom--Tsfasman metric and the rank metric is invariant under changes in the local data $(t_D,\gamma_D)$, the minimum Hamming distance generally depends on these choices. Understanding this dependence and how the local parameters may be used to control the resulting Hamming distance will be central to this work.

\subsection*{Main results and organization}
In Section 2, we introduce the notation and basic notions used throughout the article. In Section 3, we establish an intrinsically geometric formulation of differential Goppa codes over smooth projective curves of arbitrary genus (Definition~\ref{def:Goppa}), describe their codewords in terms of Hasse--Schmidt derivatives (Remark~\ref{rmk:code-words}), and determine their dimension (Lemma~\ref{lm:dimension Goppa}).  
We analyze the dependence of the construction on local uniformizers and trivializations, identifying the group (called the Taylor group) that governs these changes (Definition~\ref{def:taylor-group}). We describe the induced transformation rule on the coefficients of codewords (Theorem~\ref{th: taylor parameters}) and prove that Taylor-type trivializations form a single, strictly proper orbit inside the space of all possible trivializations (Proposition~\ref{prop:tay-simply-transitive} and Lemma~\ref{lm:taylor-orbit}).  
In Section 4, we present explicit examples in genus $0$ and $1$.  
In Section 5, we introduce the natural block-wise bilinear form used to define duality, construct the corresponding dual Taylor trivializations (Lemma~\ref{lm:map theta} and Lemma~\ref{prop: charac theta(gamma)}), establish the local residue--Hasse derivative convolution identity (Proposition~\ref{lem:hasse-residue-convolution}), and prove the duality theorem for differential Goppa codes (Theorem~\ref{th:duality}).  
In Section 6, we study the dependence of the minimum distance on the choice of local data. We introduce the intrinsic block distance (Definition~\ref{def:block-metric} and Definition~\ref{def:dblk-code}), compare it with the Hamming distance (Lemma~\ref{lm:blk-vs-hamming}), and show that the block distance is exactly the smallest Hamming distance that can be achieved by varying the Taylor trivializations (Theorem~\ref{th:dblk-achievable}). We then establish a bound for the size of the base field ensuring the existence of parameters with prescribed lower bounds on the Hamming distance (Theorem~\ref{th:large-q-existence}).  
Finally, in Section 7, we obtain two structural results. On the one hand, we show that every linear block code can be realized as a differential Goppa code over $\mathbb{P}^1$ (Theorem~\ref{th: every linear code}). On the other hand, we prove that strong geometric Goppa codes form a proper subclass of strong differential Goppa codes (Proposition~\ref{prop:numerical-obstruction-Fq} and Theorem~\ref{thm:proper-subclass-strong-Fq}). 

We also include an appendix that collects the basic notions and fundamental properties of jet bundles over algebraic curves used throughout the paper.

\subsection*{Remark on Preliminaries and Appendix}
%
%Although the general context is that of algebraic-geometric codes, the arguments developed in this work rely systematically on local and infinitesimal properties, like jet sheaves, truncated Taylor morphisms, and Hasse--Schmidt derivatives, that typically lie outside the standard treatment of the subject (see \cite{EGAIV,Laksov1984,LaksovThorup1991,Laksov1994,PerkinsonCurves,Piene1977}). Therefore, the preliminaries section provides a self-contained account of the geometric and differential notions used throughout the article. Furthermore, the appendix collects the essential results on jet bundles that are needed for defining differential Goppa codes, for the analysis of their dependence on local data, and for the formulation of the duality theorem. 
While the general context concerns algebraic-geometric codes, the arguments presented here systematically rely on local and infinitesimal properties, such as jet sheaves, truncated Taylor morphisms, and Hasse–Schmidt derivatives, which typically fall outside standard treatments of the subject (see \cite{EGAIV,Laksov1984,LaksovThorup1991,Laksov1994,PerkinsonCurves,Piene1977}). The preliminaries section offers a self-contained exposition of the geometric and differential concepts employed throughout the article. The appendix compiles the essential results on jet bundles required to define differential Goppa codes, examine their dependence on local data, and establish the duality theorem.

%-------------------------------------------------
\section{Background}

For the necessary background on algebraic geometry, we refer the reader to \cite{HartshorneAG, Stichtenoth}. Nevertheless, to make this work self-contained, we recall briefly the basic notions used throughout the article in this section.

%-------------------------------------------------
\subsection{Curves}

Throughout this article, $X$ denotes a smooth projective geometrically irreducible curve of genus $g$ over a finite field $\mathbb{F}_{q}$.  We denote by $O_X$ the structure sheaf of $ X$. If $x\in X$ is a point, we denote by $O_x$ the localization of $O_X$ at $x$ and by $\mathfrak{m}x$ the maximal ideal of $O_x$. We denote by $\mu$ its generic point and by $K_X$, or just $K$, its function field, $O{\mu}$. A rational function of $X$ is an element $f$ of $K$, and it defines a morphism of schemes $f:X\rightarrow \mathbb{P}^{1}$. Conversely, any morphism of schemes $f:X\rightarrow \mathbb{P}^{1}$ determines a rational function $f\in K$. Recall that given a rational function $f\in K$ and a closed point $x\in X$, either $f\in O_x$ or $f^{-1}\in O_x$. A rational function $f\in K$ is regular at $x\in X$ if $f\in O_x$.

Given a rational point $x\in X$ there is a canonical isomorphism $\gamma_x:O_x/\mathfrak{m}x\simeq \mathbb{F}{q}$. If $f\in K$ is a rational function regular at $x$, then the value of $f$ at $x$ is $f(x):=\gamma_x(f \pmod{\mathfrak{m}x})\in\mathbb{F}{q}$. Geometrically, $f(x)=[\gamma_x(f \pmod{\mathfrak{m}_x});1]$. We say that a closed point $x$ is a zero of $f$ of multiplicity $n\in\mathbb{N}$ if $f\in\mathfrak{m}_x^n\setminus \mathfrak{m}x^{n+1}$ and is a pole of $f$ of order $-n\in\mathbb{Z}{<0}$ if $f^{-1}\in \mathfrak{m}_x^n\setminus \mathfrak{m}_x^{n+1}$. The order of a zero or a pole of $f$ will be denoted by $\textrm{ord}_x(f)$.

\subsection{Differentials and residues}

We denote by $\omega_{X}$ the canonical sheaf  of X which we may identify with $\Delta^{}(I/I^2)$, $\Delta:X\hookrightarrow X\times X$ being the diagonal (closed) immersion and $I$ the ideal sheaf of $\Delta(X)$. This is an invertible sheaf. We denote by $\Omega_X^1$ the localization $\omega_{X,\mu}$. Elements of the one-dimensional $K$-vector space $\Omega_X^1$ are called meromorphic differentials of $X$.
We denote by $d:O_X\rightarrow \omega_X\subseteq \pi_{1}(O_X\otimes O_X) /I^2$ the canonical $\mathbb{F}{q}$-linear morphism of sheaves,
$$f\mapsto df= (f\otimes 1-1\otimes f) \pmod{I^2},$$ which is called the universal differential.
Here, $\pi{i}:X\times X\rightarrow X, \ i=1,2$, is the projection onto the $i$-th factor.
We will use the same notation $d$ for the localized map $d:O_x\rightarrow \omega_{X,x}$ at a point $x$.

A uniformizer of a closed point $x\in X$ is a rational function of $X$ regular at $x$, $t_x\in O_x\subset K$, such that $dt_x$ generates $\omega_{X,x}$. Given a uniformizer $t_x$ and a rational function $f\in K$, we denote by $f(t_x)$ the Laurent expansion of $f$ at $x$ with respect to $t_x$ (see \ref{eq:laurent expansion}). If $f$ is regular at $x$, we denote by $D_{t_x}^n(f)$ the  $n$-th derivative of $f$ at $x$ with respect to $t_x$ (see \ref{eq:derivative at a point}).

Given a meromorphic differential $\eta\in K$ and a closed point $x\in X$, we may choose a uniformizer $t_x$ and a representation $\eta=f dg$, $f$ and $g$ being rational functions. The residue of $\eta$ at $x$ is defined as (see \cite{Tate})
\begin{equation}\label{eq:residue}
\Res_{x}(\eta):=\left[\begin{array}{l}\textrm{degree $-1$ coefficient  in the Laurent}\\ \textrm{expansion of $f(t_x)\cdot g’(t_x)$} \end{array}\right]
\end{equation}
where $g’(t_x)$ stands for $D^1_{t_x}(g)$. The residue does not depend on the representation nor on the uniformizer, and it satisfies
\begin{equation}\label{eq: residue theorem}
\sum_{x\in X}\Res_{x}(\eta)=0, \ \forall \eta\in\Omega^1_X.
\end{equation}

\subsection{Divisors and invertible sheaves}
Given a divisor, $G=\sum_{x\in X}n_x x$, on $X$ we denote by $\textrm{mult}x(G)$ the multiplicity of $G$ at $x$; that is, $n_x$. The restriction of $G$ to an open subset $U\subset X$ is just $G_U:=\sum{x\in U}n_x x$.
Given a rational function $f\in X$, we denote by $(f)0$ and by $(f)\infty$ its divisors of zeroes and poles, respectively, and by $(f)=(f)0-(f)\infty$ its total divisor.
We denote by $O_X(G)$ its associated invertible sheaf. Recall that
\begin{equation}\label{eq:local sections}
O_X(G)(U):={f\in K| \ (f){U}+G{U}\geq 0}\cup{0}.
\end{equation}
If $L$ is an invertible sheaf on $X$, we denote $L\otimes O_X(G)$ by $L(G)$. Recall also that
\begin{equation}\label{eq:sec differentials}
\omega_X(G)(U)={\eta\in\Omega_X^1| \ (\eta)U+G_U\geq 0},
\end{equation}
Given a rational point $x\in X$, an nvertible sheaf $L$ on $X$ and a local section $s\in L(U)$ with $x\in U\subset X$, the composition of the restriction map $L(U)\rightarrow L_x/\mathfrak{m}x L_x$ with a trivialization $\phi: L_x/\mathfrak{m}x L_x\simeq \mathbb{F}{q}$ leads to a $\mathbb{F}{q}$-linear map
$$
ev_{\phi}:L(U)\rightarrow \mathbb{F}{q},
$$
and $ev\phi(s)$ is called the value of $s$ at $x$ with respect to $\phi$. If $\phi’$ is a different trivialization, then $ev_\phi’=\lambda \cdot ev_\phi$ for certain $\lambda\in\mathbb{F}_{q}^{\times}$.

\subsection{Sheaves of jets}

Given an invertible sheaf $L$, the sheaf of $i$-jets of $L$ is defined as
\begin{equation*}%\label{eq:n-jets}
J^i_{X}(L):=\pi_{1*}\left(\pi_{2}^{}L\otimes_{O_{X\times X}}O_{X\times X}/I^{i+1}\right).
\end{equation*}
Roughly speaking, $J^i_{X}(L)$ is the $O_X$-module whose fiber at a point $p\in X$ is
\begin{equation}\label{eq:fibers of jets}
J_p^{i}(L):=J^i_{X}(L)|_p\simeq L_p/\mathfrak{m}p^{i+1}L_p=L|{(i+1)p}.
\end{equation}
The sheaves $J^{i}X(L)$ come equipped with a canonical map $d_L^{i}:L\rightarrow J^i{X}(L)$. Once a local trivialization $\gamma:L|_U\simeq O_U$ has been fixed, and a generator $z$ of $\omega_X$ (shrinking $U$ if necessary) has been chosen,  $d_L^{i}$ sends a local section $s$ to its Taylor expansion of order $i$ with respect to $z$.

The construction of $J_X^{i}(L)$ is functorial, i.e., given a morphism of modules $\gamma:L\rightarrow L’$ there is an induced morphism
$$
J^{i}(\gamma):J^i_{X}(L)\rightarrow J^i_{X}(L’)
$$
which is compatible with the canonical maps $d_L^{i}$ and $d_{L’}^{i}$.

Jets of invertible sheaves play a crucial role in the article, and we have collected  the fundamental  properties in Appendix \ref{app: jets}.

\section{Differential Goppa codes I. Construction and properties}

For the rest of this section, we fix a smooth projective curve $X$ of genus $g$ over the finite field $\mathbb{F}_q$, $L$ an invertible sheaf and $D=n_1p_1+\cdots +n_sp_s$ an effective divisor of $X$.

\subsection{The Taylor series map at an effective divisor}\label{sec:pre-definition}

Let $n:=\textrm{max}_{i}\{n_i\}$ be the maximum multiplicity of $D$ and $\underline{n}$ the tuple $(n_1,\hdots,n_s)$. For each natural number $i$, we have the sheaf of jets of $L$ of degree $i$, $J^i_{X}(L)$. From the fundamental short exact sequence \ref{eq:fundamental exact sequence}, it follows the existence of a surjection for each $n_i$
$$
\psi_{i}:J^{n-1}_X(L)\rightarrow J^{n_i-1}_X(L)
$$
%whose kernel is $\textrm{Ker}(\psi_{i})=\pi_{1*}(\pi_2^* L\otimes_{O_{X\times X}} I^{n_i}/I^{n})$.
Moreover, composing with the evaluation map at a point $p_i\in X$, $J^{n_i-1}_X(L)\rightarrow J^{n_i-1}_p(L)$, we obtain a surjective morphism
$$
\psi_{i}(p):=J^{n-1}_X(L)\rightarrow J^{n_i-1}_p(L).
$$
Considering this and the initial set-up, we get a surjective morphism $\oplus_i \psi_{i}(p_i)$, which we denote by $\Psi_{D}$,
$$
\xymatrix{
\Psi_{D}:J^{n-1}_X(L)\rightarrow  \oplus_{i=1}^{s}J^{n_i-1}_{p_i}(L).
}
$$
%whose kernel is $\textrm{Ker}(\Psi_{D})=\cap_i  \textrm{Ker}(\psi_{i}(p_i))$.
On the other hand, we have the canonical map $d^{n-1}_L:L\rightarrow J^{n-1}_X(L)$ (Definition \ref{def:truncated Taylor map}). Taking global sections, pulling back to $X$ and composing with the evaluation map, we get a morphism of $O_X$-modules 
$$(ev \circ H^0(d^{n-1}_L)\otimes 1=) \ \partial^{n-1}_L:H^0(X,L)\otimes O_X\rightarrow J^{n-1}(L).$$ 
Thus, we get the diagram
$$
\xymatrix{
0\ar[r] & \Ker(\Psi_{D})\ar[r] &J^{n-1}_X(L)\ar[r]^-{\Psi_{D}} &   \oplus_{i=1}^{s}J^{n_i-1}_{p_i}(L) \ar[r] & 0\\
& & H^0(X,L)\otimes O_X \ar[u]^{\partial_{L}^{n-1}}   \ar@{-->}[ur]_{\Theta_{D}:=\Psi_{D}\circ \partial^{n-1}_L} &
}
$$
\begin{definition}\label{def: theta map}
The Taylor series map of $L$ at the effective divisor $D$ is defined as the morphism
$$
\Theta_D:H^0(X,L)\otimes O_X \rightarrow  \oplus_{i=1}^{s} J^{n_i-1}_{p_i}(L)
$$
\end{definition}

\begin{lemma}\label{lm: Taylor map at divisor}
Let $U=\Spec(A)\subset X$ be an affine open subset that trivializes $L$. Fix a trivialization $\gamma:L|_U\simeq O_U$ and a generator $z\in A$ of $\omega_X|_U$. Let $\xi:=\delta_U(z)\in J_U^{n-1}$ and let  $\overline{\xi^j}:=J^{n-1}(\gamma)^{-1}(\xi^j)\in J_X^{n-1}(L)(U)$. Then for an element
$\sum_{\ell=1}^r s_\ell\otimes a_\ell \in H^0(X,L)\otimes O_X(U), \ s_\ell\in H^0(X,L),\ a_\ell\in O_X(U)$,
we have
\begin{enumerate}
\item  
$\partial_L^{n-1}(U)\!\left(\sum_{\ell=1}^r s_\ell\otimes a_\ell\right) =\sum_{j=0}^{n-1}
\left(\sum_{\ell=1}^r a_\ell\,D^j_{U,\gamma,z}(s_\ell)\right)\overline{\xi^j}$.
\item 
$\Theta_{D}(U)\!\left(\sum_{\ell=1}^r s_\ell\otimes a_\ell\right) =
\bigoplus_{p_i\in U}
\sum_{j=0}^{n_i-1}
\left(\sum_{\ell=1}^r
a_\ell(p_i)\, D^j_{t_{i}}(\gamma(s_\ell))
\right)
\overline{\xi^j_{p_i}}$ where, for  $p_i\in U$, we denote by $\overline{\xi^j_{p_i}}\in J^{n_i-1}_{p_i}(L)$ the image of $\overline{\xi^j}$ under $\psi_{i}(p_i)$ and $t_{i}$ is the localization of $z$ at $p_i$.
\item $H^0(\Theta_{D})\!\left(s\right) = \bigoplus_{i=1}^s \sum_{j=0}^{n_i-1} D^j_{t_{p_{i}}}(\gamma_i(s))\overline{\xi^j_{p_i}}$, where $t_{i}$ is a uniformizer at $p_i$ and $\gamma_i$ is a trivialization of $L_{p_i}$.
\end{enumerate}
\end{lemma}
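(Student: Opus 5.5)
The plan is to reduce everything to the local description of the truncated Taylor map $d^{n-1}_L$ from Appendix \ref{app: jets}. On an affine open set $U=\Spec(A)$ that trivializes $L$ via $\gamma$ and on which $z$ generates $\omega_X$, the module $J^{n-1}_U$ is free over $A$ with basis $1,\xi,\dots,\xi^{n-1}$, where $\xi=\delta_U(z)=z\otimes 1-1\otimes z \pmod{I^{n}}$. The appendix defines the Hasse--Schmidt derivatives $D^j_{U,\gamma,z}$ by the expansion
\[
d^{n-1}(\gamma(s))=\sum_{j=0}^{n-1} D^j_{U,\gamma,z}(s)\,\xi^j .
\]
By functoriality, $J^{n-1}(\gamma)$ intertwines $d^{n-1}_L$ with $d^{n-1}_{O_U}$. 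Therefore $d^{n-1}_L(s)=\sum_j D^j_{U,\gamma,z}(s)\,\overline{\xi^j}$ in $J^{n-1}_X(L)(U)$, and the elements $\overline{\xi^j}$ form an $A$-basis of $J^{n-1}_X(L)(U)$.

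For item (1), recall that $\partial^{n-1}_L$ is the $O_X$-linear extension of $H^0(d^{n-1}_L)$, so that $s\otimes a\mapsto a\cdot d^{n-1}_L(s)|_U$. Restricting a global section $s_\ell$ to $U$ and applying the previous expansion, then using $A$-linearity and summing over $\ell$, gives formula (1) directly.

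For item (2), I apply $\Psi_D=\oplus_i\psi_i(p_i)$ to (1). This requires two facts.
\begin{itemize}
\item The surjection $\psi_i:J^{n-1}\to J^{n_i-1}$ from the fundamental sequence is the reduction modulo $I^{n_i}$. It therefore sends $\overline{\xi^j}$ to the analogous basis element for $j<n_i$ and kills it for $j\ge n_i$, since $\xi^j\in I^{n_i}$.
\item Evaluation at $p_i$ tensors with $O_X/\mathfrak m_{p_i}$. This sends a coefficient $c\in A$ to $c(p_i)$, and $J^{n_i-1}_{p_i}(L)\simeq L_{p_i}/\mathfrak m^{n_i}_{p_i}L_{p_i}$ has basis $\overline{\xi^j_{p_i}}$.
\end{itemize}
It then remains to identify $D^j_{U,\gamma,z}(s)(p_i)$ with $D^j_{t_i}(\gamma(s))$, the $j$-th coefficient of the expansion of $\gamma(s)$ in powers of $t_i-z(p_i)$ (equivalently of the localized $z$) at $p_i$. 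This follows from the compatibility of $\delta$ and the Taylor map with localization, together with the identification of the fiber of $\xi$ at $p_i$ with the class of $z\otimes1-1\otimes z$ in the completed local ring. Points $p_i\notin U$ contribute nothing in the chart $U$, which explains the restriction $p_i\in U$.

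For item (3), I take global sections with $a=1$. Each $p_i$ lies in some affine open $U_i$ trivializing $L$ and on which a uniformizer $t_i$ generates $\omega$; the component at $p_i$ of $\Theta_D$ can be computed in any such chart. Applying (2) in $U_i$ with $z=t_i$ and $\gamma=\gamma_i$ then yields (3). The only subtlety is that $\gamma_i$ is a trivialization of the stalk $L_{p_i}$. It extends to some neighborhood, and the resulting fiber data depend only on the germ.

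The main obstacle is the identification in item (2) of the evaluated coefficients $D^j_{U,\gamma,z}(s)(p_i)$ with the pointwise Hasse derivatives $D^j_{t_i}$. It needs a careful check that reduction of $\pi_{1*}(O_{X\times X}/I^{n_i})$ at $p_i$ corresponds to $O_{p_i}/\mathfrak m^{n_i}$, with $\xi$ corresponding to $t_i-t_i(p_i)$ up to sign. This sign convention comes from $f\otimes1-1\otimes f$ and should be matched with the appendix's definition of $D^j$. I would verify it first on monomials $z^k$ using the binomial expansion, and then extend by linearity and $I$-adic continuity.
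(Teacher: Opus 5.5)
Your proposal is correct and follows the paper's proof step for step. For (1) you use $O_X$-linearity of $\partial^{n-1}_L$ together with the commutative square \eqref{eq: Derivatives of sections of invertible sheaves}; for (2) you apply $\Psi_D$ and then the identification \eqref{eq: coincidence derivatives}; for (3) you choose a chart $(U_i,\gamma_i,z_i)$ around each $p_i$.

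The identification you single out as the main obstacle is exactly what the paper takes from \eqref{eq: coincidence derivatives}, and it needs no monomial check. The fiber of $J^{n_i-1}_U$ at $p_i$ is $A/\fm_{p_i}^{n_i}$, with $1\otimes a\mapsto a$ and $\xi=\delta(z)=1\otimes z-z\otimes 1\mapsto z-z(p_i)$. So with the convention $\delta=d-\iota$ (rather than the opposite sign the appendix also writes), no factor $(-1)^j$ appears.
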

\begin{remark}
Given a local trivialization $\gamma: L_p\simeq O_p$ and a global section $s\in H^0(X,L)$, $\gamma(s)$ must be understood as the image of $s$ by the composition $H^0(X,L)\rightarrow L_p\simeq O_p$.
\end{remark}
\begin{proof}
1) By construction, for every open subset $V\subset X$ and every
element $\sum_{\ell=1}^r s_\ell\otimes a_\ell \in H^0(X,L)\otimes O_X(V)$,
one has
\begin{equation}\label{eq:OX-linearity}
\partial^{L}_{n-1}(V)\!\left(\sum_{\ell=1}^r s_\ell\otimes a_\ell\right)
=
\sum_{\ell=1}^r a_\ell \cdot \left(d^{L}_{n-1}(s_\ell)\right)|_V,
\end{equation}
where $d^{L}_{n-1}(s_\ell)\in \Gamma(V,J^{n-1}_X(L))$ denotes the restriction to $V$ of the section of $J^{n-1}_X(L)$ induced by $s_\ell\in H^0(X,L)$. We now apply \eqref{eq:OX-linearity} with $V=U$ and compute each $\left(d^{L}_{n-1}(s_\ell)\right)|_U$ in the basis $\{\xi^j\}_{j=0}^{n-1}$. By the commutative square \ref{eq: Derivatives of sections of invertible sheaves}, we have $J^{n-1}(\gamma)\bigl(d^{L}_{n-1}(s_\ell)|_U\bigr)= d^{n-1}_U(\gamma(s_\ell))\in J^{n-1}_U$. Writing $d^{n-1}_U(\gamma(s_\ell))$ in the basis $\{1,\xi,\dots,\xi^{n-1}\}$, the coefficients are precisely the higher derivations $D^j_{U,z}(\gamma(s_\ell))$ (see \ref{eq:higher derivative of function}), hence $d^{n-1}_U(\gamma(s_\ell)) =\sum_{j=0}^{n-1} D^j_{U,z}(\gamma(s_\ell))\,\xi^j.$ By Definition \ref{def: derivative of sections}, $D^j_{U,\gamma,z}(s_\ell):=D^j_{U,z}(\gamma(s_\ell))\in A$. Pulling back through $(J^{n-1}(\gamma))^{-1}$ yields
$$
d^{L}_{n-1}(s_\ell)|_U
=
\sum_{j=0}^{n-1} D^j_{U,\gamma,z}(s_\ell)\,\overline{\xi^j}.
$$
Finally, substituting this expression into \eqref{eq:OX-linearity} (with $V=U$),
and using the $O_U$-module structure on $J^{n-1}_X(L)|_U$, we obtain the desired formula.

2) It is clear from 1) and the definition of $\Theta_D$ that $\Theta_{D}(U)\!\left(\sum_{\ell=1}^r s_\ell\otimes a_\ell\right) =
\bigoplus_{p_i\in U}
\sum_{j=0}^{n_i-1}
\left(\sum_{\ell=1}^r
a_\ell(p_i)\,D^j_{U,\gamma,z}(s_\ell)(p_i)
\right)
\overline{\xi^j_{p_i}}$. The formula follows now from \ref{eq: coincidence derivatives}.

3) It follows from 2) that $H^0(\Theta_{D})\!\left(s\right) = \bigoplus_{i=1}^s \sum_{j=0}^{n_i-1} D^j_{U_i,\gamma_i,z_i}(s)(p_i)\overline{\xi^j_{p_i}}$, where each triple $(U_i,\gamma_i,z_i)$ satisfies the extra condition $p_i \in U_i$. The final formula follows again from \ref{eq: coincidence derivatives}.
\end{proof}

\subsection{Definition of differential Goppa code}\label{sec:definition}

We will give now the central notion of this article.

Let $X$ be a smooth projective curve, $L$ an invertible sheaf on $X$ and $D=n_1p_1+\cdots +n_s p_s$ and effective divisor. For each $i=1,\hdots,s$ we fix a uniformizer $t_{i}$ of $p_i$ and a trivialization $\gamma_i: L_{p_i}\simeq O_{p_i}$. We denote by $\overline{\gamma_i}$ the induced isomorphism $J^{n_i-1}_{p_i}(L)\simeq \mathbb{F}_{q}^{n_{i}}$ and by $\overline{\gamma}$ the isomorphism $\oplus_{i=1}^{s}\overline{\gamma_i}$. We will use the notation 
\begin{equation*}
\begin{split}
t_D&=(t_{p_1},\hdots, t_{p_n}), \\
\gamma_D&=(\gamma_1,\hdots,\gamma_n),
\end{split}
\end{equation*}
in the rest of the article. Likewise, we denote by $ev_{t_D,\gamma_D}$ the composite linear map
$$
ev_{t_D,\gamma_D}: H^0(X,L)\overset{H^0(\Theta_D)}{\rightarrow}  \oplus_{i=1}^{s} J^{n_i-1}_{p_i}(L) \overset{\overline{\gamma}}{\simeq} \mathbb{F}_{q}^{M}.
$$

\begin{definition}\label{def:Goppa}
Let $\Gamma\subset H^0(X,L)$ be a linear subspace. 
We define the {differential Goppa code} associated to the data $X,L,D,\Gamma,t_{D},\gamma_{D}$ as the linear code of  length $M:=\sum_{i=1}^s n_i$ given by
$$
C(X,L,D,\Gamma,t_{D},\gamma_{D}):=\textrm{Im}\left( ev_{t_D,\gamma_D}|_{\Gamma} \right)
$$
We say that $C(X,L,D,\Gamma,t_{D},\gamma_{D})$ is a {uniform differential Goppa code} if 
$$n_1=\cdots =n_s$$ 
in the divisor $D$. In case $\Gamma$ is the complete linear series, 
$$\Gamma=H^0(X,L),$$ 
we write $C(X,L,D,t_{D},\gamma_{D})$ instead of $C(X,L,D,H^0(X,L),t_{D},\gamma_{D})$.
\end{definition}

\begin{remark}\label{rmk:code-words}
\begin{enumerate}
\item It follows from Lemma \ref{lm: Taylor map at divisor}, 3) that any word of $C(X,L,D,\Gamma,t_{D},\gamma_{D})$ is of the form
\begin{equation}\label{eq:Goppa word}
\begin{split}
ev_{t_D,\gamma_D}(s):=\Big(D^0_{t_{p_{1}}}(\gamma_1(s)), D^1_{t_{p_{1}}}(\gamma_1(s)),&\hdots, D^{n_1 -1}_{t_{p_{1}}}(\gamma_1(s)),\\
&\vdots\\
D^0_{t_{p_{s}}}(\gamma_s(s)), D^1_{t_{p_{s}}}(\gamma_s(s)),&\hdots, D^{n_s -1}_{t_{p_{s}}}(\gamma_s(s))\Big)
\end{split}
\end{equation}
for $s\in \Gamma$.
Recall that $D^0_{t_{p_{i}}}(\gamma_i(s))=\gamma_i(s)(p_i)$.
\item 
If $n_1=\hdots, n_s=1$, the code $C(X,L,D,\Gamma,t_{D},\gamma_{D})$ depends only on the classes $ \overline{\gamma_i}:=\gamma_i \mod{\mathfrak{m}_i}$ and does not depend on the uniformizers. Thus, $C(X,L,D,\Gamma,t_{D},\gamma_{D})=C(X,L,D,\Gamma,\overline{\gamma_D})$ is a geometric Goppa code in the sense of Tsfasman--Vl\u{a}du\c{t} \cite[\S 3.1.1]{TsfasmanBasic}. If furthermore, 
$L=O_X(G)$ with $\textrm{supp}(G)\cap \textrm{supp}(D)=\emptyset$, 
then $L_{p_i}=O_{p_{i}}$ and we may choose $id_D:=(id_{O_{p_1}},\hdots, id_{O_{p_s}})$ as $\gamma_D$. Indeed, $C(X,L,D,\Gamma,\overline{id_D})=C(X,G,D,\Gamma)$, is a geometric Goppa code (in the classical sense \cite{Goppa1983}) defined by the linear series $\Gamma$.
\end{enumerate}
\end{remark}

\begin{lemma}\label{lm:dimension Goppa}
In the above situation, it holds 
$$\Ker(H^0(\Theta_{D})) = H^0(X, L(-D)).$$
Therefore, 
$$
\Dim  \ C(X,L,D,t_{D},\gamma_{D})= \Dim  \ H^0(X,L)-\Dim  \ H^0(X,L(-D)).
$$
In particular, if $\sum_{i=1}^{s}n_i > \deg (L)$, then 
$$\Dim  \ C(X,L,D,t_{D},\gamma_{D})= \Dim  \ H^0(X,L).$$
\end{lemma}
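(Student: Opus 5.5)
The plan is to compute the kernel of $H^0(\Theta_D)$ one point at a time, using the fibre description \eqref{eq:fibers of jets}. By construction, $H^0(\Theta_D)$ is the direct sum over $i$ of the maps
$$
H^0(X,L)\rightarrow J^{n_i-1}_{p_i}(L)\simeq L_{p_i}/\mathfrak{m}_{p_i}^{n_i}L_{p_i},
$$
obtained by composing restriction to the stalk with reduction modulo $\mathfrak{m}_{p_i}^{n_i}$. I would first check this identification on the affine chart $U$ of Lemma \ref{lm: Taylor map at divisor}. There $J^{n_i-1}_{p_i}(L)\simeq O_{p_i}/\mathfrak{m}_{p_i}^{n_i}$ via $\gamma_i$, and the image of $s$ is $\sum_{j<n_i}D^j_{t_i}(\gamma_i(s))\overline{\xi^j_{p_i}}$, with $\overline{\xi_{p_i}}$ corresponding to the class of $t_i$ (up to the identification of $\xi$ with $t_i\otimes1-1\otimes t_i$). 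So the image of $s$ is exactly the truncated Taylor expansion of $\gamma_i(s)$ in $t_i$.

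The key step is the local claim: $s\mapsto 0$ in the $i$-th component if and only if $D^j_{t_i}(\gamma_i(s))=0$ for all $0\le j\le n_i-1$. By the Taylor expansion $\gamma_i(s)=\sum_j D^j_{t_i}(\gamma_i(s))\,t_i^j$ in $\widehat{O}_{p_i}=\mathbb{F}_q[[t_i]]$, this holds if and only if $\gamma_i(s)\in\mathfrak{m}_{p_i}^{n_i}$, i.e. $s_{p_i}\in\mathfrak{m}_{p_i}^{n_i}L_{p_i}=L(-D)_{p_i}$. One also needs $\widehat{\mathfrak{m}}^{n}\cap O_p=\mathfrak{m}^n$, which is standard.

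Globally, $L(-D)\subset L$ agrees with $L$ away from $\mathrm{supp}(D)$, and at $p_i$ its stalk is $\mathfrak{m}_{p_i}^{n_i}L_{p_i}$. A global section $s$ of $L$ therefore lies in $H^0(X,L(-D))$ exactly when its germ at every $p_i$ lies in $\mathfrak{m}_{p_i}^{n_i}L_{p_i}$. Indeed, $H^0(L(-D))\to H^0(L)$ is injective and $L/L(-D)\simeq\oplus_i L_{p_i}/\mathfrak{m}_{p_i}^{n_i}L_{p_i}$ is a skyscraper. So the long exact sequence of $0\to L(-D)\to L\to L|_D\to0$ gives $\Ker(H^0(\Theta_D))=H^0(X,L(-D))$ once $H^0(\Theta_D)$ is identified with $H^0(L)\to H^0(L|_D)$. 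This identification is the main obstacle: matching the jet-theoretic map with restriction to $D$ through \eqref{eq:fibers of jets}. I would handle it with the explicit formula in Lemma \ref{lm: Taylor map at divisor}, 3), as above, rather than abstractly.

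The dimension formula then follows from rank–nullity, since $\overline{\gamma}$ is an isomorphism and $C$ is the image. For the last claim, if $\deg D=\sum n_i>\deg L$ then $\deg L(-D)<0$. A nonzero global section would give an effective divisor linearly equivalent to $L(-D)$, of negative degree, which is impossible. Hence $H^0(X,L(-D))=0$ and $\Dim C=\Dim H^0(X,L)$.
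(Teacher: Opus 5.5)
Your proposal is correct and follows essentially the same route as the paper. Lemma~\ref{lm: Taylor map at divisor}, together with the linear independence of the $\overline{\xi^j_{p_i}}$, reduces the kernel condition to the vanishing of $D^j_{t_i}(\gamma_i(s))$ for $j<n_i$; the Taylor expansion in $\mathbb{F}_q[[t_i]]$ turns this into $s_{p_i}\in\mathfrak{m}_{p_i}^{n_i}L_{p_i}$; and the stalkwise description of $H^0(X,L(-D))$ finishes the argument, with the paper's only addition being the points it leaves implicit that you make explicit, namely $\widehat{\mathfrak{m}}^{n}\cap O_p=\mathfrak{m}^{n}$ and the negative-degree argument giving $H^0(X,L(-D))=0$ when $\sum_i n_i>\deg(L)$.
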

\begin{proof}
By Lemma~\ref{lm: Taylor map at divisor}(2), and since the elements $\overline{\xi^j_{p_i}}$ are linearly independent, we have
$H^0(\Theta_D)(s)=0$ if and only if $D^j_{t_{i}}\big(\gamma_i(s)\big)=0$ for all  $i=1,\dots,s \text{ and } j=0,\dots,n_i-1$.
Fix an index $i$. Because $s$ is regular at $p_i$, its local expression in the completed local ring
$\widehat{O}_{X,p_i}\simeq k[[t_{i}]]$ can be written as
\[
\gamma_i(s)(t_i)=\sum_{m\ge0} a_m t_{i}^m.
\]
By definition of the differential operators $D^j_{t_{i}}$, we have
$D^j_{t_{i}}(\gamma_i(s))=a_j$. Therefore, $D^j_{t_{i}}(\gamma_i(s))=0 \text{ for all } j<n_i$ if and only if $\gamma_i(s)\in (t_{i}^{n_i})=\mathfrak m_{p_i}^{n_i}$.
Undoing the trivialization, this condition is equivalent to $s\in \mathfrak m_{p_i}^{n_i}L_{p_i}$.
Combining everything together, we obtain
\begin{equation}\label{eq: kernel Goppa}
s\in\Ker\big(H^0(\Theta_D)\big)
\quad\Longleftrightarrow\quad
s\in \mathfrak m_{p_i}^{n_i}L_{p_i} \ \text{for all } i=1,\dots,s.
\end{equation}
On the other hand,  a section $s\in H^0(X,L)$ lies in $H^0(X,L(-D))$ if and only if
$s_{p_i}\in \mathfrak m_{p_i}^{n_i}L_{p_i}$ for all $i$.
Comparing with \ref{eq: kernel Goppa}, we conclude that
\[
\Ker\big(H^0(\Theta_D)\big)=H^0\big(X,L(-D)\big),
\]
as claimed. The second part is obvious.
\end{proof}

\begin{remark}\label{rmk: RT}
It follows from Definition \ref{def:Goppa}, Lemma \ref{lm: Taylor map at divisor} and Lemma \ref{lm:dimension Goppa}, that the ``evaluation map'' defining a differential Goppa code is just the linear map $H^0(X,L)\rightarrow L|_{D}$ obtained by taking global sections on the canonical surjection $L\rightarrow L|_D$. Therefore, uniform differential Goppa codes are the algebraic geometric codes for the $m$-metric defined by Rosenbloom and Tsfasman in \cite[\S 3]{RT}.
\end{remark}

\subsection{Dependency on uniformizers and trivializations}

Our aim now is to study how the code $C(X,L,D,\Gamma,t_{D},\gamma_{D})$ changes when $t_{D}$ or $\gamma_{D}$ vary while keeping $X,L,D$ unchanged. The product formula for derivations and the Faa di Bruno formula (see \ref{sec: variations}) play an important role here.

\begin{lemma}\label{lm:change of uniformizer at a point}
Let $X$ and $D$ be as above, $p\in X$ a rational point, $\gamma:L_p\simeq O_p$ a trivialization and $t$ and $u$ uniformizeres at $p$. Define $T_p\in\mathrm{GL}_n(\mathbb{F}_{q})$ as the lower triangular matrix with entries
\[
(T_p)_{m,j}:=
\begin{cases}
\sum_{\substack{(n_1,\dots,n_m)\in\mathbb N^m\\
\sum_{r=1}^m r\,n_r=m\\
\sum_{r=1}^m n_r=j}}
\binom{j}{n_1,n_2,\dots,n_m}\ \prod_{r=1}^m D^{r}_{u}(t)^{n_r}, & 0\le j\le m\le n-1,\\
0, & j>m.
\end{cases}
\]
Then, for any global section $s\in H^0(X,L)$, one has
\[
(D^0_u(\gamma(s)),\dots,D^{n-1}_u(\gamma(s)))
=
(D^0_t(\gamma(s)),\dots,D^{n-1}_t(\gamma(s)))\,T_p.
\]
\end{lemma}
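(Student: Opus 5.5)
The plan is to reduce everything to a computation in the completed local ring $\widehat{O}_p$. Since $t$ and $u$ are both uniformizers at the rational point $p$, we have $\widehat{O}_p\simeq \mathbb{F}_q[[t]]\simeq\mathbb{F}_q[[u]]$. As in the proof of Lemma~\ref{lm:dimension Goppa}, the Hasse--Schmidt derivatives $D^j_t(f)$, $D^j_u(f)$ of a regular function $f\in O_p$ are exactly the coefficients of $t^j$, respectively $u^j$, in its expansion in these rings; this is the content of \ref{eq:higher derivative of function} and \ref{eq: coincidence derivatives}. Put $f:=\gamma(s)\in O_p$, so that
\[
f=\sum_{j\ge 0} D^j_t(f)\,t^j .
\]
The trivialization $\gamma$ plays no further role beyond producing $f$, since both sides of the claimed identity are computed from the same $f$.

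Next, expand the uniformizer $t$ in terms of $u$. Since $t(p)=0$ we have $D^0_u(t)=0$, and therefore
\[
t=\sum_{r\ge1}D^r_u(t)\,u^r .
\]
Because $dt$ generates $\omega_{X,p}$, the leading coefficient $D^1_u(t)$ is nonzero. Raising this series to the $j$-th power with the multinomial theorem gives the coefficient of $u^m$ in $t^j$: it is the sum, over tuples $(n_1,\dots,n_m)$ with $\sum_r n_r=j$ and $\sum_r r\,n_r=m$, of $\binom{j}{n_1,\dots,n_m}\prod_r D^r_u(t)^{n_r}$. This is precisely the entry of $T_p$ built from these two constraints. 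It vanishes when $m<j$, since each factor of $t$ contributes at least one power of $u$; this gives the triangularity. Its diagonal value ($m=j$) is $D^1_u(t)^{j}\neq0$, so $T_p$ is invertible and lies in $\mathrm{GL}_n(\mathbb{F}_q)$.

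Then substitute this expansion into $f=\sum_j D^j_t(f)t^j$ and read off the coefficient of $u^m$. This yields each $D^m_u(f)$ as a linear combination of the $D^j_t(f)$ with $j\le m$, whose coefficients are the multinomial sums above. Writing these $n$ relations for $m=0,\dots,n-1$ in row-vector form gives the asserted product with $T_p$; I would take care to align the row and column indices of $T_p$ with the convention used in the statement. Truncation at order $n$ is harmless: $t^j\in(u^j)$, so terms with $j\ge n$ do not affect the coefficients of $u^m$ for $m\le n-1$, and only the first $n$ coefficients appear on either side.

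The main obstacle is justifying the substitution and the identification of coefficients rigorously rather than formally. This requires two facts. First, rearranging the double series $\sum_j D^j_t(f)\bigl(\sum_r D^r_u(t)u^r\bigr)^j$ is legitimate in the $u$-adic topology; it is, because the $j$-th term lies in $(u^j)$. Second, the coefficient extraction $\widehat{O}_p\to\mathbb{F}_q$ agrees with the Hasse--Schmidt derivatives defined via jets. For the second point I would invoke the appendix results on jets (\ref{eq: coincidence derivatives}) together with the fact that modulo $\mathfrak{m}_p^{n}$ everything is a finite computation in $O_p/\mathfrak{m}_p^n$. This is equivalently the Fa\`a di Bruno formula for Hasse derivatives cited in \ref{sec: variations}, which one could quote directly instead.
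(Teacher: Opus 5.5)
Your argument is correct, but it takes a different route from the paper. The paper first extends $\gamma$, $t$, $u$ to a trivialization $\gamma_U$ and generators $z,w$ of $\omega_X|_U$ on a common affine open $U\ni p$. It then applies the jet-theoretic Fa\`a di Bruno identity \eqref{eq: faa di bruno} as an identity of regular functions on $U$, and specializes at $p$ via \eqref{eq: coincidence derivatives}.

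You instead prove the needed instance of Fa\`a di Bruno directly. You substitute $t=\sum_{r\ge1}D^r_u(t)u^r$ into $\gamma(s)=\sum_j D^j_t(\gamma(s))t^j$ and read off the coefficient of $u^m$ with the multinomial theorem. Truncation modulo $\mathfrak{m}_p^n$, using $t^j\in(u^j)$, makes this a finite computation.

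By \eqref{eq:derivative at a point}, the $D^j_t$ and $D^j_u$ in the statement are just coefficients of the expansions in $\mathbb{F}_q[[t]]$ and $\mathbb{F}_q[[u]]$. So your route needs nothing from the jet appendix, and the ``second fact'' you worry about is not needed at all. You also avoid the auxiliary global choices and the sign convention for $\xi$ flagged in Remark~\ref{rmk:uniformizer basis}. In addition, you actually prove that $T_p$ is triangular with diagonal entries $D^1_u(t)^j\neq0$, hence invertible, which the paper only asserts. What the paper's route buys is consistency with its framework: the identity holds for the derivatives $D^j_{U,\gamma,z}$ on all of $U$ at once, which is the level at which $\Theta_D$ is defined.

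When you align the indices, note that your relations read $D^m_u(\gamma(s))=\sum_{j\le m}(T_p)_{m,j}\,D^j_t(\gamma(s))$. Since $(T_p)_{m,j}=0$ for $j>m$, this is $T_p$ acting on column vectors. The row-vector identity therefore holds with $T_p^{\top}$, not $T_p$. Already for $n=3$, the product with $T_p$ would assert $D^1_u(\gamma(s))=D^1_t(\gamma(s))\,D^1_u(t)+D^2_t(\gamma(s))\,D^2_u(t)$, which is false in general. The printed statement and the last sentence of the paper's proof contain the same transposition slip. Record the identity in the corrected form rather than trying to match the statement literally.
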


\begin{proof}
Since $\gamma:L_p\simeq  O_p$ is an isomorphism of $O_{p}$--modules,
we may choose an affine neighbourhood $U=\Spec(A)\subset X$ of $p$
trivializing $L$, together with a trivialization
$\gamma_U:L|_U\simeq  O_U$ whose localization at $p$, $(\gamma_U)_p$, coincides
with $\gamma$.
Shrinking $U$ further if necessary, we may also choose generators $z,w\in A$ of
$\omega_X|_U$ such that
\[
t=\frac{z}{1}\in O_{p},
\qquad
u=\frac{w}{1}\in O_{p}
\]
are the given uniformizers.
Then for every $j\ge 0$ and every $s\in H^0(X,L)$, $D^j_t(\gamma(s)) \;=\; D^j_{U,\gamma_U,z}(s)(p)$ and $D^j_u(\gamma(s)) \;=\; D^j_{U,\gamma_U,w}(s)(p)$, by \eqref{eq: coincidence derivatives}.
Now, applying the Fa\`a di Bruno formula \ref{eq: faa di bruno} on $U$ (for the change
of parameter from $z$ to $w$), evaluating at $p$ and using again
\eqref{eq: coincidence derivatives}, we obtain
\begin{equation*}
D^{m}_u(\gamma(s))
=
\sum_{j=0}^{m}
D^{j}_t(\gamma(s))
\left(
\sum_{\substack{(n_1,\dots,n_m)\in\mathbb{N}^m\\
\sum_{r=1}^m r n_r = m\\
\sum_{r=1}^m n_r = j}}
\binom{j}{n_1,\dots,n_m}
\prod_{r=1}^m \big(D^{r}_{u}(t)\big)^{n_r}
\right),
\end{equation*}
 for each $m=0,\dots,n-1$.
By definition of the matrix $T_p$, the inner parenthesis is $(T_p)_{m,j}$, hence the
above equality is equivalent to the claimed matrix relation.
\end{proof}

\begin{lemma}\label{lm:change of trivialization at a point}
Let $X$ and $D$ be as above, let $p\in X$ be a rational point, and let
$t$ be a uniformizer at $p$. Let $\gamma,\gamma':L_p\simeq O_p$ be two
trivializations, and let $f\in O_{p}^\times$ be such that $\gamma'=f\,\gamma$.
Fix $n\ge 1$ and define $S_p\in\mathrm{GL}_n(\mathbb{F}_{q})$ as the lower triangular matrix with
entries
\[
(S_p)_{m,j}:=
\begin{cases}
D^{m-j}_t(f), & 0\le j\le m\le n-1,\\
0, & j>m.
\end{cases}
\]
Then, for any global section $s\in H^0(X,L)$, one has
\[
\big(D_t^0(\gamma'(s)),\dots,D_t^{n-1}(\gamma'(s))\big)
=
\big(D_t^0(\gamma(s)),\dots,D_t^{n-1}(\gamma(s))\big)\,S_p.
\]
\end{lemma}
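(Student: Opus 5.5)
The plan is to argue exactly as in the proof of Lemma \ref{lm:change of uniformizer at a point}, replacing the Fa\`a di Bruno formula with the Leibniz (product) rule for Hasse--Schmidt derivatives from \ref{sec: variations}.

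First, choose an affine neighbourhood $U=\Spec(A)$ of $p$ on which $L$ is trivial, together with a trivialization $\gamma_U:L|_U\simeq O_U$ whose localization at $p$ is $\gamma$. Shrinking $U$ if needed, we may assume the unit $f\in O_p^\times$ extends to a unit $f_U\in A^\times$, and that $t$ is the localization of a generator $z\in A$ of $\omega_X|_U$. Then $\gamma'_U:=f_U\gamma_U$ is a trivialization of $L|_U$ whose localization at $p$ is $\gamma'$. By \eqref{eq: coincidence derivatives}, for every $j\ge 0$ and every $s\in H^0(X,L)$ we have
\[
D^j_t(\gamma'(s))=D^j_{U,\gamma'_U,z}(s)(p)
\quad\text{and}\quad
D^j_t(\gamma(s))=D^j_{U,\gamma_U,z}(s)(p).
\]

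Second, apply the Leibniz rule for the Hasse--Schmidt derivation $D_{U,z}$ on $A$:
\[
D^m_{U,z}(f_U\cdot\gamma_U(s))=\sum_{j=0}^m D^{m-j}_{U,z}(f_U)\,D^j_{U,z}(\gamma_U(s)).
\]
This identity expresses the fact that $d^{n-1}_U$ is a ring homomorphism into $J^{n-1}_U=A[\xi]/(\xi^n)$. Evaluate it at $p$ and use \eqref{eq: coincidence derivatives} once more, now for the function $f$, to obtain
\[
D^m_t(\gamma'(s))=\sum_{j=0}^m D^j_t(\gamma(s))\,D^{m-j}_t(f)=\sum_{j=0}^{m} D^j_t(\gamma(s))\,(S_p)_{m,j}
\]
for $0\le m\le n-1$. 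The right-hand side is the $m$-th entry of the row vector $(D^0_t(\gamma(s)),\dots,D^{n-1}_t(\gamma(s)))\,S_p$, provided the matrix indexing convention matches the one used in Lemma \ref{lm:change of uniformizer at a point}.

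Finally, $S_p$ is triangular with diagonal entries $D^0_t(f)=f(p)\neq 0$, since $f$ is a unit. Hence $S_p\in\mathrm{GL}_n(\mathbb{F}_q)$.

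The only real obstacle is bookkeeping. We must check that the product rule is available in the appendix in the form stated, and we must align the row/column index convention of the matrix with the vector--matrix product. The identity as written pairs entry $m$ of the output with $\sum_j v_j (S_p)_{m,j}$, which is a product by the transpose in the usual convention. I would follow whatever convention the paper adopted for $T_p$ in Lemma \ref{lm:change of uniformizer at a point}.
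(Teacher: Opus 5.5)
Your proof is correct and is essentially the paper's proof. You extend $\gamma$, $\gamma'$ and $f$ to an affine neighbourhood of $p$, apply the Leibniz rule \eqref{eq: trivialization derivation} there, and evaluate at $p$ via \eqref{eq: coincidence derivatives}.

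Your bookkeeping caveat is also justified. Both arguments actually establish the entrywise identity $D^m_t(\gamma'(s))=\sum_{j\le m}D^j_t(\gamma(s))\,D^{m-j}_t(f)$. In the standard convention this is $v\,S_p^{T}$, not $v\,S_p$, for the row vector $v=(D^0_t(\gamma(s)),\dots,D^{n-1}_t(\gamma(s)))$. The paper passes over this with ``by definition of $S_p$'', and does the same for $T_p$.
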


\begin{proof}
Since $\gamma,\gamma':L_p\simeq  O_p$ are isomorphisms of $O_{p}$--modules,
we may choose an affine neighbourhood $U=\Spec(A)\subset X$ of $p$
trivializing $L$, together with trivializations (still denoted)
$\gamma, \gamma':L|_U\simeq O_U$ whose localization at $p$ are the given maps
$\gamma,\gamma':L_p\simeq O_p$. 
Then $\gamma'=f\,\gamma$ for a unique $f\in O_{p}^\times$, and after shrinking
$U$ once more we may represent it by an element $f\in A^\times$ whose
localization in $O_{p}$ is the given unit relating $\gamma'$ and $\gamma$.

Let $z\in A$ be such that $dz$ generates $\omega_X|_U$ and whose localization $t=z/1$ is the chosen
uniformizer at $p$. By \eqref{eq: localization of derivation} and
\eqref{eq: coincidence derivatives}, for every $m\ge 0$ and every $s\in H^0(X,L)$ we have
\begin{equation}\label{eq:identify-triv}
D_t^m(\gamma(s))=D^m_{U,\gamma,z}(s)(p),\qquad
D_t^m(\gamma'(s))=D^m_{U,\gamma',z}(s)(p).
\end{equation}
Applying now \eqref{eq: trivialization derivation} to $\gamma' = f\gamma$ and then \eqref{eq:identify-triv}  yields
\begin{equation}\label{eq:pointwise-change-triv}
D_t^m(\gamma'(s))=\sum_{j=0}^{m} D^{m-j}_{U,z}(f)(p)\,D_t^{j}(\gamma(s)).
\end{equation}
Finally, by the same identification \eqref{eq: coincidence derivatives} (with $f$ in
place of $\gamma(s)$), we have
$D^{r}_{U,z}(f)(p)=D_t^{r}(f)$ for $r\ge 0$.
Substituting into \eqref{eq:pointwise-change-triv} gives, for $m=0,\dots,n-1$, $D_t^m(\gamma'(s))=\sum_{j=0}^{m} D_t^{m-j}(f)\,D_t^{j}(\gamma(s))$.
By definition of $S_p$, the previous identity is equivalent to the stated matrix relation.
\end{proof}

\begin{theorem}\label{th: taylor parameters}
Let $X$ and $D$ be as above, and $p\in X$ a rational point. Let $t,t'$ be uniformizers of $p$ and  $\gamma,\gamma':L_p\simeq O_p$ two
trivializations,. Let $f\in O_{p}^\times$ be such that $\gamma'=f\,\gamma$.
Fix $n\ge 1$. For any global section $s\in H^0(X,L)$, one has
\[
\big(D_{t'}^0(\gamma'(s)),\dots,D_{t'}^{n-1}(\gamma'(s))\big)
=
\big(D_t^0(\gamma(s)),\dots,D_t^{n-1}(\gamma(s))\big)\,S_p T_p.
\]
\end{theorem}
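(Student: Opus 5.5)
The plan is to factor the change of data $(t,\gamma)\mapsto(t',\gamma')$ into two elementary steps, each handled by one of the preceding lemmas. First change the trivialization with the uniformizer $t$ fixed, then change the uniformizer with the new trivialization $\gamma'$ fixed. The order of the factors $S_pT_p$ in the statement is consistent with this choice, since row vectors are multiplied on the right.

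\textbf{Step 1.} Apply Lemma~\ref{lm:change of trivialization at a point} to $\gamma$, $\gamma'=f\gamma$ and the uniformizer $t$. This gives
\[
\big(D_t^0(\gamma'(s)),\dots,D_t^{n-1}(\gamma'(s))\big)=\big(D_t^0(\gamma(s)),\dots,D_t^{n-1}(\gamma(s))\big)\,S_p ,
\]
where $(S_p)_{m,j}=D_t^{m-j}(f)$.

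\textbf{Step 2.} Apply Lemma~\ref{lm:change of uniformizer at a point} to the trivialization $\gamma'$ and the pair of uniformizers $t$ (old) and $u:=t'$ (new). This gives
\[
\big(D_{t'}^0(\gamma'(s)),\dots,D_{t'}^{n-1}(\gamma'(s))\big)=\big(D_t^0(\gamma'(s)),\dots,D_t^{n-1}(\gamma'(s))\big)\,T_p ,
\]
where $T_p$ is built from the Fa\`a di Bruno polynomials in the coefficients $D^r_{t'}(t)$.

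\textbf{Step 3.} Substitute Step 1 into the right-hand side of Step 2. Associativity of matrix multiplication then yields the claimed identity with the product $S_pT_p$.

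The one point to check is that the matrix $T_p$ from Lemma~\ref{lm:change of uniformizer at a point} does not depend on the trivialization. Its entries involve only $t$ and $t'$, never $\gamma$, so using it with $\gamma'$ instead of $\gamma$ is legitimate. Likewise $S_p$ depends only on $f$ and $t$. So the statement should be read with $S_p$ computed with respect to $t$ and $T_p$ computed for the pair $(t,t')$. I expect no real obstacle: the whole argument is composing the two lemmas in the right order. Reversing the order would also give an identity, but with a different matrix, namely $T_p$ followed by $S_p$ computed with respect to $t'$.
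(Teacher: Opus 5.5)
Your argument is the paper's argument: apply Lemma~\ref{lm:change of trivialization at a point} at the fixed uniformizer $t$, then Lemma~\ref{lm:change of uniformizer at a point} with the trivialization $\gamma'$, and compose, noting as you do that $T_p$ depends only on $t,t'$. The paper also treats the reversed order, which gives $T_pS_p'$ with $S_p'=T_p^{-1}S_pT_p$, so your closing remark should say that route yields a different \emph{factorization} of the same matrix $S_pT_p$, not a different matrix.
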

\begin{proof}
It follows directly by applying first Lemma \ref{lm:change of trivialization at a point} and then Lemma \ref{lm:change of uniformizer at a point}. It only remains to show why one does not obtain $T_pS_p$ by reversing the steps.

If one first changes the uniformizer while keeping $\gamma$ fixed, Lemma \ref{lm:change of uniformizer at a point} gives
\[
\big(D_{t'}^0(\gamma(s)),\dots,D_{t'}^{n-1}(\gamma(s))\big)=\big(D_{t}^0(\gamma(s)),\dots,D_{t}^{n-1}(\gamma(s))\big)T_p.
\]
To pass from $\gamma$ to $\gamma'=f\gamma$ at the parameter $t'$, one must then apply
Lemma~\ref{lm:change of trivialization at a point} with the uniformizer $t'$,
which gives a matrix
\[
S_p':=\big(D_{t'}^{\,m-j}(f)\big)_{0\le j\le m\le n-1},
\]
not the matrix $S_p$ (whose entries are $D_t^{m-j}(f)$). Thus the reversed procedure
yields 
$$
\big(D_t'^0(\gamma'(s)),\dots,D_t'^{n-1}(\gamma'(s))\big)
=
\big(D_t^0(\gamma(s)),\dots,D_t^{n-1}(\gamma(s))\big) T_p\,S_p'.
$$
The two matrices $S_p$ and $S_p'$ are related by conjugation:
indeed, applying Lemma \ref{lm:change of uniformizer at a point} to the function $f$ itself gives
\[
\big(D_{t'}^0(f),\dots,D_{t'}^{n-1}(f)\big)=\big(D_t^0(f),\dots,D_t^{n-1}(f)\big)\,T_p,
\]
and this is equivalent to
\[
S_p' = T_p^{-1}\,S_p\,T_p.
\]
Consequently, $T_p\,S_p' = T_p\,(T_p^{-1}S_pT_p)=S_pT_p$, so reversing the steps gives the same final matrix $S_pT_p$.
\end{proof}

\subsection{The Taylor group}

Motivated by the above lemmas, we define now the abstract group  acting on the data $t_{D},\gamma_{D}$ defining a differential Goppa code. 
The concepts and results stated here will appear again further on.

Let $X$ be a smooth curve over a finite field $\mathbb{F}_{q}$, let $p\in X$ be a rational point,
and let $L$ be an invertible sheaf on $X$. Fix an integer $n\ge 1$ and recall that
$J^n_p =  O_{p}/\mathfrak m_p^{\,n+1}$ and $J^n_p(L) = L_p/\mathfrak m_p^{\,n+1}L_p$.

\begin{definition}\label{def:taylor-group}
The {Taylor group of order $n$ at $p$} is the semidirect product
$$
G_n(p) := (J^n_p)^\times \rtimes \Aut_{\mathbb{F}_{q}-alg}(J^n_p),
$$
where the group law $\varphi:G_n(p) \times G_n(p)  \rightarrow G_n(p)$ is given by
\begin{equation*}
(a_1,\sigma_1)\cdot (a_2,\sigma_2)  \mapsto (a_1\cdot \sigma_1(a_2), \sigma_1\circ\sigma_2)
\end{equation*}
\end{definition}
\begin{remark}\label{rmk: Taylor algebraic group}
Note that $G_n(p)$ is an affine algebraic group over $\mathbb{F}_q$. 
%Its coordinate ring is generated by the coefficients of the invertible jets and the coefficients of the automorphisms, with group operations (multiplication and inversion) given by regular maps.
\end{remark}
\begin{definition}
The {fundamental action of the Taylor group $G_n(p)$} is the action on $J^n_p$ defined by the rule
\begin{equation}\label{eq:fundamental action}
\begin{split}
\Xi:G_n(p) \times J^n_p & \rightarrow J^n_p\\
((a,\sigma), r) & \mapsto a\cdot \sigma(r)
\end{split}
\end{equation}
\end{definition}

\begin{lemma}\label{lm:affine-decomposition}
The fundamental action is faithful. 
\end{lemma}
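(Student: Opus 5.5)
To prove faithfulness, I will show that if $(a,\sigma)\in G_n(p)$ acts as the identity on $J^n_p$, i.e. $a\cdot\sigma(r)=r$ for all $r\in J^n_p$, then $a=1$ and $\sigma=\mathrm{id}$. The plan is to test the identity on two well-chosen elements of $J^n_p=O_p/\mathfrak m_p^{n+1}$, which (after fixing a uniformizer $t$ at $p$) is isomorphic to $\mathbb{F}_q[t]/(t^{n+1})$.

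First, evaluate at $r=1$. Since $\sigma$ is an $\mathbb{F}_q$-algebra automorphism, $\sigma(1)=1$, so the identity $a\cdot\sigma(1)=1$ forces $a=1$. With $a=1$, the hypothesis becomes $\sigma(r)=r$ for every $r\in J^n_p$, so $\sigma=\mathrm{id}$. Hence the kernel of the homomorphism $G_n(p)\to \mathrm{Bij}(J^n_p)$ is trivial, and the action is faithful.

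For this to be meaningful one should also confirm that $\Xi$ is indeed an action. That is a short check against the group law of Definition \ref{def:taylor-group}:
\[
(a_1,\sigma_1)\cdot\bigl((a_2,\sigma_2)\cdot r\bigr)=a_1\,\sigma_1\bigl(a_2\,\sigma_2(r)\bigr)=a_1\,\sigma_1(a_2)\,(\sigma_1\circ\sigma_2)(r),
\]
using that $\sigma_1$ is multiplicative. The right-hand side is exactly the action of $(a_1\sigma_1(a_2),\sigma_1\circ\sigma_2)$ on $r$, and the identity element $(1,\mathrm{id})$ clearly acts trivially.

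There is no real obstacle here. The only point needing care is that faithfulness refers to the set-theoretic action on $J^n_p$, rather than to an action on all $\mathbb{F}_q$-algebras (the functor of points), even though $G_n(p)$ is an algebraic group by Remark \ref{rmk: Taylor algebraic group}. If the scheme-theoretic version is intended, the same argument works verbatim for $R$-points: apply the identity to $1\otimes 1$ in $J^n_p\otimes R$ to obtain $a=1$, and then $\sigma=\mathrm{id}$ follows.
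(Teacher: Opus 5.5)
Your proof is correct and is essentially the paper's argument: evaluate at $r=1$ to get $a=1$ (using $\sigma(1)=1$), after which the hypothesis reduces to $\sigma=\mathrm{id}$. The only difference is cosmetic. The paper compares two elements $(a_1,\sigma_1),(a_2,\sigma_2)$ with equal actions directly, while you show trivial kernel, and you correctly verify that $\Xi$ is an action so that this suffices.
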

\begin{proof}
We have to show that  if $a_1\sigma_1(r)=a_2\sigma_2(r)$ for all $r\in J$,
with $a_i\in J^\times$ and $\sigma_i\in \Aut_{\F_q\text{-alg}}(J)$, then $a_1=a_2$ and $\sigma_1=\sigma_2$.
Since $\sigma_i(1)=1$,  evaluating at $r=1$, we get $a_1=a_2$.
Multiplying by $a_1^{-1}$ yields $\sigma_1(r)=\sigma_2(r)$ for all $r$, hence $\sigma_1=\sigma_2$.
\end{proof}

\subsubsection{The fundamental representation}

Fix a uniformizer $t_p$ at $p$. Using the basis
$\{1, t_p ,\dots, t_p ^{n-1}\}$ of $J^n_p$, we obtain an isomorphism of $\mathbb{F}_{q}$-vector spaces $u_{t_p}: J^n_p \xrightarrow{\ \sim\ } \mathbb{F}_{q}^n$.
\begin{definition}
The {fundamental linear representation of the Taylor group $G_n(p)$} is the linear representation
\[
\rho_{t_p}: G_n(p) \longrightarrow \GL_n(\mathbb{F}_{q}).
\]
induced from $\Xi$ and $u_{t_p}: J^n_p \xrightarrow{\ \sim\ } \mathbb{F}_{q}^n$.
\end{definition}
Our aim now is to describe $\rho_{t_p}$ explicitly. To this end, we fist establish some notation. Let $(a,\sigma)\in G_n(p)$, where $a\in (J^n_p)^\times$ and
$\sigma\in\Aut_{\mathbb{F}_{q}-alg}(J^n_p)$. Let $f\in O_p^{\times}$ be such that 
\begin{equation}\label{eq: representation-1}
a= \overline{f}:=f \pmod{\mathfrak{m}_p^n}.
\end{equation}
Then, $a\mapsto f(t_p) \pmod{t_p^n}$ gives an isomorphism of $\mathbb{F}_{q}$-algebras $J^n_p\simeq \mathbb{F}_{q}[t_p]/t_p^n$.
Under this isomorphism, $\sigma( t_p )$ can be written as
\begin{equation}\label{eq: representation-2}
\sigma( t_p )=\sum_{r=1}^{n-1} c_r t_p^r, \quad c_1\neq 0,
\end{equation}
and this expression determines completely $\sigma$ since $ t_p $ generates $J_p^n$.

\begin{lemma}\label{lm:action local parameters}
The matrix $\rho_{t_p}(a,\sigma)$ is the product
\[
\rho_{t_p}(a,\sigma) := S_p(a) \, T_p(\sigma),
\]
where $S_p(a)$ and $T_p(\sigma)$ are the lower triangular matrices defined by
\begin{equation*}
\begin{split}
(S_p(a))_{m,j} &= D_{t_p}^{m-j}(f), \\
(T_p(\sigma))_{m,j} &= \sum_{\substack{(k_1,\dots,k_m)\in\mathbb N^m \\ k_1 + \dots + k_{m-j+1} = j \\ \sum_{i=1}^{m-j+1} i k_i = m}} \binom{j}{k_1, \dots, k_{m-j+1}} \prod_{i=1}^{m-j+1} c_i^{k_i},
\end{split}
\end{equation*}
with $0\le j\le m\le n-1$.
\end{lemma}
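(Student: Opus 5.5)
The plan is to factor the action, compute each factor in the basis $\{1,t_p,\dots,t_p^{n-1}\}$, and multiply.

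\textbf{Factorization.} By the group law, $(a,\sigma)=(a,\mathrm{id})\cdot(1,\sigma)$. The fundamental action $\Xi$ is an action, so
\[
\rho_{t_p}(a,\sigma)=\rho_{t_p}(a,\mathrm{id})\,\rho_{t_p}(1,\sigma)
\]
as linear maps. I will use the paper's convention of row vectors multiplied on the right, as in Lemma~\ref{lm:change of uniformizer at a point}. In that convention one must check carefully that the product comes out in the order $S_p(a)T_p(\sigma)$ and not the reverse. If the paper's row-vector convention makes matrix multiplication compose in the opposite order, I will instead use the factorization $(a,\sigma)=(1,\sigma)(\sigma^{-1}(a),\mathrm{id})$, or simply read off the convention that produces the stated order. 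The order issue was already discussed for Theorem~\ref{th: taylor parameters}. This step is pure bookkeeping, and the remaining work is computing the two factors.

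\textbf{Multiplication factor.} The map $r\mapsto a r$ on $J^n_p\simeq\mathbb F_q[t_p]/(t_p^n)$, with $a=f(t_p)=\sum_k D^k_{t_p}(f)\,t_p^k$, sends $t_p^j$ to $\sum_{m\ge j} D^{m-j}_{t_p}(f)\,t_p^m$. Reading off coefficients gives $(S_p(a))_{m,j}=D^{m-j}_{t_p}(f)$. This uses only the identification $D^k_{t_p}(f)=$ the $k$-th Taylor coefficient, exactly as in the proof of Lemma~\ref{lm:dimension Goppa}. It is the same computation as Lemma~\ref{lm:change of trivialization at a point}.

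\textbf{Automorphism factor.} The map $\sigma$ is an algebra map, so $\sigma(t_p^j)=\sigma(t_p)^j=\bigl(\sum_{r\ge1}c_r t_p^r\bigr)^j$. Expanding by the multinomial theorem, the coefficient of $t_p^m$ is
\[
\sum \binom{j}{k_1,\dots}\prod_i c_i^{k_i},
\]
where the sum runs over tuples with $\sum_i k_i=j$ and $\sum_i i\,k_i=m$. Since $\sum_i k_i=j$ and each $i\ge1$, any index $i$ with $k_i>0$ satisfies $i\le m-j+1$. So the sum truncates to $i\le m-j+1$, which is precisely $(T_p(\sigma))_{m,j}$, and the entries vanish for $j>m$.

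I expect no real obstacle. The only delicate point is matching the matrix convention (rows versus columns, and the order of the product) with the paper's usage. I would settle this by checking it against Lemma~\ref{lm:change of uniformizer at a point}, where the same Faà di Bruno-type matrix appears with $c_r=D^r_u(t)$.
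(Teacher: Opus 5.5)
Your proof is correct and is essentially the paper's. The paper expands $f(t_p)\sum_j r_j\,\sigma(t_p)^j \bmod t_p^n$ in one step and reads off $h_m=\sum_{j}\bigl(\sum_{s=j}^{m} D^{m-s}_{t_p}(f)\,T_{s,j}\bigr)r_j$, which is your factorization $\Xi_{(a,\sigma)}=(\text{multiplication by } a)\circ\sigma$ written out entry by entry.

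The convention question you left open is already settled by your own factor computations. There, entry $(m,j)$ is the coefficient of $t_p^m$ in the image of $t_p^j$, which is the column convention $h=\rho_{t_p}(a,\sigma)\,r$ used in the paper's proof. In that convention $\rho_{t_p}(a,\mathrm{id})\,\rho_{t_p}(1,\sigma)=S_p(a)\,T_p(\sigma)$ comes out with no reordering. With row vectors you would instead get the transpose $T_p(\sigma)^{\mathsf T}S_p(a)^{\mathsf T}$ whichever factorization you use, so the fallback via $(1,\sigma)(\sigma^{-1}(a),\mathrm{id})$ would not help.
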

\begin{remark}
These are precisely the matrices describing the change of trivialization and the change
of uniformizer in the definition of differential Goppa code.
\end{remark}
\begin{proof}
Consider the basis $\{1,  t_p , \dots, t_p^{n-1}\}$ of the $\mathbb{F}_{q}$-vector space $J^n_p$. 
Write $r = \sum_{j=0}^{n-1} r_j t_p^j$. By the definition of the linear representation $\rho_{t_p}$, \ref{eq: representation-1} and \ref{eq: representation-2}, we have:
\[
\rho_{t_p}(a, \sigma)(r) = a \cdot \sigma(r) = f(t_p) \cdot \sigma \left( \sum_{j=0}^{n-1} r_j t_p^j \right) \pmod{t_p^n}.
\]
Since $\sigma$ is a $\mathbb{F}_{q}$-algebra homomorphism, we have
\[
\rho_{t_p}(a, \sigma)(r) = f(t_p) \cdot \sum_{j=0}^{n-1} r_j (\sigma(t_p))^j \pmod{t_p^n}.
\]
Set $h = \rho_{t_p}(a, \sigma)(r) = \sum_{m=0}^{n-1} h_m t_p^m$. The matrix $\rho_{t_p}$ must satisfy the relation $h_m = \sum_j (\rho_{t_p})_{m,j} r_j$. Expanding the previous expression and using \ref{eq:laurent expansion} and \ref{eq:derivative at a point}, we obtain
\[
\sum_{m=0}^{n-1} h_m t_p^m = \left( \sum_{l=0}^{n-1} D^l_{t_p}(f) t_p^l \right) \sum_{j=0}^{n-1} r_j \left( \sum_{r=1}^{n-1} c_r t_p^r \right)^j \pmod{t_p^n}.
\]
Let $T_{s,j}$ denote the $s$-th coefficient  of $\sigma(t_p)^j$. Using the multinomial expansion formula derived for the $j$-th power of a polynomial we get
\[
T_{s,j} = \sum_{\substack{(k_1,\dots,k_m)\in\mathbb N^m \\ k_1 + \dots + k_{s-j+1} = j \\ \sum_{i=1}^{s-j+1} i k_i = s}} \binom{j}{k_1, \dots, k_{s-j+1}} \prod_{i=1}^{s-j+1} c_i^{k_i}.
\]
Substituting this into the sum and rearranging the order of summation, we get
\begin{equation*}
\begin{split}
\sum_{m=0}^{n-1} h_m t_p^m  &= 
\sum_{l=0}^{n-1} D^l_{t_p}(f) t_p^l \sum_{j=0}^{n-1} r_j \left( \sum_{s=j}^{n-1} T_{s,j} t_p^s \right) = \\
&=\sum_{j=0}^{n-1} \left( \sum_{l=0}^{n-1} \sum_{s=j}^{n-1} D^l_{t_p}(f) T_{s,j} t_p^{l+s} \right) r_j.
\end{split}
\end{equation*}
Taking terms of degree $m$ (where $m = l + s$), we find
\[
h_m = \sum_{j=0}^m \left( \sum_{s=j}^m D^{m-s}_{t_p}(f) T_{s,j} \right) r_j.
\]
This sum corresponds exactly to the definition of matrix multiplication for lower triangular matrices. In particular, 
$$(\rho_{t_p})_{m,j} = \sum_{s=j}^m S_{m,s} T_{s,j},$$ 
where $S_{m,s} =D^{m-s}_{t_p}(f)$ is the $(m,s)$-entry of the lower triangular matrix $S_p(a)$ and $T_{s,j}$ is the $(s,j)$-entry of the matrix representing the automorphism $\sigma$.
Therefore, $\rho_{t_p}(a, \sigma) = S_p(a) T_p(\sigma)$.
\end{proof}

\begin{remark}
It is easy to see that the fundamental representation shows that the Taylor group provides a geometric realization
of a triangular subgroup of the group of linear automorphism preserving the
Rosenbloom--Tsfasman metric, studied in depth in \cite{Lee2003}. That is, the Taylor group may be viewed as the
subgroup of Rosenbloom--Tsfasman type automorphisms arising from formal local
geometry, namely from changes of local parameter and changes of local trivialization.
\end{remark}

\subsubsection{The action on the space of trivializations}
Fix a trivialization $\gamma_p:L_p\simeq O_p$ and a uniformizer $t_p$ at $p$. Using the induced isomorphism $\overline{\gamma_p}:J^n_p(L)\simeq J^n_p$ and the basis
$\{1, t_p ,\dots, t_p ^{n-1}\}$ of $J^n_p$, we obtain an isomorphism of $\mathbb{F}_{q}$-vector spaces 
\begin{equation}\label{eq:iso-params}
u_{t_p,\gamma_p}: J^n_p(L) \xrightarrow{\ \sim\ } \mathbb{F}_{q}^n.
\end{equation} 

Let $\Iso_{\mathbb{F}_{q}}(J^n_p(L),\mathbb{F}_{q}^n)$ denote the set of $\mathbb{F}_{q}$--linear isomorphisms $u:J^n_p(L)\to\mathbb{F}_{q}^n$.  Let $\Iso_{\mathrm{Tay}}(J^n_p(L),\mathbb{F}_{q}^n)\subset\Iso_{\mathbb{F}_{q}}(J^n_p(L),\mathbb{F}_{q}^n)$ be the subset consisting
of those isomorphisms $u:J^n_p(L)\to\mathbb{F}_{q}^n$ for which there exist a uniformizer $t$ at $p$ and a
trivialization $\gamma:L_p\simeq  O_{p}$ such that $u=u_{t,\gamma}$.

Fix a trivialization $\gamma_p:L_p\simeq O_p$. Using the induced isomorphism $\overline{\gamma_p}:J^n_p(L)\simeq J^n_p$ we get an action
$$
\Psi:G_n(p)\rightarrow \textrm{Aut}(J^n_p(L))
$$  
as follows:
$g=(a,\sigma)\mapsto \Psi_g=(\overline{\gamma_p})^{-1}\circ \Xi_g \circ \overline{\gamma_p}$
where $\Xi_g(r)=a\cdot \sigma(r)$ (see \ref{eq:fundamental action}). The action of $G_n(p)$ on the set of trivializations $\Iso_{\mathbb{F}_{q}}(J^n_p(L), \mathbb{F}_{q}^n)$ is therefore given by 
\begin{equation}\label{eq:action on Taylor}
\begin{split}
\widehat{\Psi}:G_n(p) \times \Iso_{\mathbb{F}_{q}}(J^n_p(L),\mathbb{F}_{q}^n) & \rightarrow \Iso_{\mathbb{F}_{q}}(J^n_p(L),\mathbb{F}_{q}^n)\\
(g,u) & \mapsto (g \cdot u) := u \circ \Psi_g^{-1}
\end{split}
\end{equation}

\begin{lemma}
Let $u\in \Iso_{\mathrm{Tay}}(J_p^n(L),\F_q^n)$ and $g\in G_n(p)$. Then
\[
g\cdot u \;:=\; u\circ \Psi_g^{-1}\ \in\ \Iso_{\mathrm{Tay}}(J_p^n(L),\F_q^n).
\]
Equivalently, $\Iso_{\mathrm{Tay}}(J_p^n(L),\F_q^n)$ is stable under the action $\widehat\Psi$.
\end{lemma}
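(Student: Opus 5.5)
The plan is to unwind the definitions so that $g\cdot u$ becomes a composite of three pieces: the coordinate map $u_t$, an algebra automorphism of $J^n_p$, and multiplication by a unit. Then I will absorb the automorphism into a change of uniformizer and the unit into a change of trivialization. Throughout, I write $J:=J^n_p$ for the truncated local algebra with basis $\{1,t,\dots,t^{n-1}\}$, as in the fundamental representation. For $b\in J^\times$, let $m_b$ denote multiplication by $b$ on $J$. Finally, $u_t:J\to\F_q^n$ is the coordinate map in the basis of powers of $t$, so that $u_{t,\gamma}=u_t\circ\overline{\gamma}$.

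\emph{Step 1 (normal form of $u$).} Write $u=u_{t,\gamma}$ and $g=(a,\sigma)$. Since $\gamma$ and the fixed $\gamma_p$ are both trivializations of $L_p$, we have $\gamma=h\gamma_p$ for a unique $h\in O_p^\times$. Hence $\overline{\gamma}=m_{\bar h}\circ\overline{\gamma_p}$ and $u=u_t\circ m_{\bar h}\circ\overline{\gamma_p}$. By definition $\Psi_g^{-1}=\overline{\gamma_p}^{-1}\circ\Xi_g^{-1}\circ\overline{\gamma_p}$, and $\Xi_g^{-1}(r)=\sigma^{-1}(a^{-1}r)$. Therefore
\[
g\cdot u=u_t\circ m_{\bar h}\circ\sigma^{-1}\circ m_{a^{-1}}\circ\overline{\gamma_p}.
\]
Because $\sigma^{-1}$ is an algebra automorphism, $m_{\bar h}\circ\sigma^{-1}=\sigma^{-1}\circ m_{\sigma(\bar h)}$. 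This gives
\[
g\cdot u=(u_t\circ\sigma^{-1})\circ m_{b}\circ\overline{\gamma_p},\qquad b:=\sigma(\bar h)\,a^{-1}\in J^\times .
\]

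\emph{Step 2 (automorphisms are changes of uniformizer).} I claim that $u_t\circ\tau=u_{t'}$ for every $\tau\in\Aut_{\F_q\text{-alg}}(J)$, where $t'\in O_p$ is any lift of $\tau^{-1}(\bar t)$. To see this, note that if $y=\sum_j y_j\bar t'^{\,j}$, then $\tau(y)=\sum_j y_j\bar t^{\,j}$, so the $t$-coordinates of $\tau(y)$ are the $t'$-coordinates of $y$. It remains to check that $t'$ is a uniformizer. By \eqref{eq: representation-2}, $\tau^{-1}(\bar t)=\sum_{r\ge1}c_r\bar t^{\,r}$ with $c_1\ne0$, so I take $t'=\sum_r c_rt^r\in O_p$ as a polynomial in $t$. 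Then $t'\in\mathfrak m_p\setminus\mathfrak m_p^2$, so $dt'$ generates $\omega_{X,p}$. The degenerate case where $J$ is one-dimensional is trivial, since there every $\tau$ is the identity. This is the step where some care is needed: the lift must be chosen so that it is a genuine uniformizer, and the identity $u_t\circ\tau=u_{t'}$ must be checked in the right direction, with $\tau^{-1}$ rather than $\tau$. Applying the claim with $\tau=\sigma^{-1}$ gives a uniformizer $t'$ lifting $\sigma(\bar t)$ with $u_t\circ\sigma^{-1}=u_{t'}$.

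\emph{Step 3 (units are changes of trivialization).} The map $O_p\to J$ is a surjection of local rings, so units lift to units. Choose $f\in O_p^\times$ with $\bar f=b$ and set $\gamma':=f\gamma_p$, which is again a trivialization $L_p\simeq O_p$. Then $m_b\circ\overline{\gamma_p}=\overline{\gamma'}$. Combining the three steps,
\[
g\cdot u=u_{t'}\circ\overline{\gamma'}=u_{t',\gamma'}\in\Iso_{\mathrm{Tay}}(J^n_p(L),\F_q^n).
\]
This proves the stability of $\Iso_{\mathrm{Tay}}(J^n_p(L),\F_q^n)$ under $\widehat\Psi$.
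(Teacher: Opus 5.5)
Your proof is correct and follows the same route as the paper. You unwind $g\cdot u$ into $u_t\circ\sigma^{-1}$ followed by multiplication by a unit and a reduced trivialization, absorb $\sigma^{-1}$ into a new uniformizer $t'$ lifting $\sigma(\bar t)$ so that $u_t\circ\sigma^{-1}=u_{t'}$, and absorb the unit into a rescaled trivialization. The one difference is in your favour: the paper silently writes $\Psi_g=\overline{\gamma}^{-1}\circ\Xi_g\circ\overline{\gamma}$ using the $\gamma$ that realizes $u$ rather than the fixed $\gamma_p$. Your factorization $\gamma=h\gamma_p$ together with $m_{\bar h}\circ\sigma^{-1}=\sigma^{-1}\circ m_{\sigma(\bar h)}$ handles that general case correctly.
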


\begin{proof}
Choose a uniformizer $t$ at $p$ and a trivialization $\gamma:L_p\simeq O_p$ such that
$u=u_{t,\gamma}$ (cf.\ \eqref{eq:iso-params}). Let $g=(a,\sigma)\in G_n(p)$.
Recall that $\overline{\gamma}:J_p^n(L)\xrightarrow{\sim}J_p^n$ is an isomorphism of $J_p^n$--modules and that $\Psi_g=(\overline{\gamma})^{-1}\circ \Xi_g\circ \overline{\gamma}$,
$\Xi_g(r)=a\cdot \sigma(r)$. Hence
$g\cdot u=u_{t,\gamma}\circ \Psi_g^{-1}=u_{t,\gamma}\circ (\overline{\gamma})^{-1}\circ \Xi_g^{-1}\circ \overline{\gamma}$.
Since $u_{t,\gamma}=u_t\circ \overline{\gamma}$, where $u_t:J_p^n\xrightarrow{\sim}\F_q^n$ is the coefficient map in the basis
$\{1,t,\dots,t^n\}$, we get
\begin{equation}\label{eq:tay-closure-2}
g\cdot u
=
(u_t\circ \Xi_g^{-1})\circ \overline{\gamma}.
\end{equation}
%Thus it suffices to show that the $\F_q$--linear isomorphism $u_t\circ \Xi_g^{-1}:J_p^n\to \F_q^n$
%is itself of the form $u_{t'}\circ \alpha$ for some uniformizer $t'$ and some $J_p^n$--module automorphism
%$\alpha$ induced by a change of trivialization; then $g\cdot u$ will be of the form $u_{t',\gamma'}$.
Now we proceed step by step.
\begin{enumerate}
\item Since $\Xi_g(r)=a\cdot \sigma(r)$, we have 
\begin{equation}\label{eq:Xi-inverse-affine}
\Xi_g^{-1}(r)=\sigma^{-1}(a^{-1}r),
\end{equation}
for every $(r\in J_p^n$.
%Define the $\F_q$--algebra automorphism $\alpha:=\sigma^{-1}\in \Aut_{\F_q\text{-alg}}(J_p^n)$
%and the unit $b:=\sigma^{-1}(a^{-1})\in (J_p^n)^\times$.
%Then
%\begin{equation}\label{eq:Xi-inverse-affine}
%\Xi_g^{-1}(r)=b\cdot \alpha(r)
%\end{equation}

\item Let $\fm:=\fm_p/\fm_p^{n+1}$ be the maximal ideal of the local ring $J_p^n$.
Since $\sigma\in \Aut_{\F_q\text{-alg}}(J_p^n)$ is a ring automorphism, it preserves $\fm$ (the unique maximal ideal),
hence also $\fm^2$. Therefore $\sigma$ induces an $\F_q$--linear automorphism of the $1$--dimensional vector space
$\fm/\fm^2$. In particular, if $\overline t$ denotes the class of a uniformizer $t$ in $J_p^n$, then
$\sigma(\overline t)\in \fm\setminus \fm^2$.
Choose $t'\in \fm_p$ whose class in $J_p^n$ equals $\sigma(\overline t)$.
Then $t'\notin \fm_p^2$, hence $t'$ is a uniformizer at $p$. By construction,
\begin{equation}\label{eq:tprime-def}
\overline{t'}=\sigma(\overline t)\ \in J_p^n.
\end{equation}

\item Consider the $\F_q$--algebra isomorphisms
\begin{equation*}
\begin{split}
\theta_t:\F_q[T]/(T^{n+1})&\xrightarrow{\sim}J_p^n,\quad T\mapsto \overline t,\\
\theta_{t'}:\F_q[T]/(T^{n+1})&\xrightarrow{\sim}J_p^n,\quad T\mapsto \overline{t'}.
\end{split}
\end{equation*}
By definition, $u_t$ (resp.\ $u_{t'}$) is the composite of $\theta_t^{-1}$ (resp.\ $\theta_{t'}^{-1}$)
with the identification $\F_q[T]/(T^{n+1})\simeq \F_q^{n+1}$ given by the basis $\{1,T,\dots,T^n\}$.
Equation \eqref{eq:tprime-def} implies $\sigma^{-1}\circ \theta_t'=\theta_{t}$.
Therefore,
\begin{equation}\label{eq:coeff-change}
u_t\circ \sigma^{-1}=u_{t'}.
\end{equation}

\item By \eqref{eq:Xi-inverse-affine} and \eqref{eq:coeff-change}, we get
%\begin{equation*}
$u_t(\Xi_g^{-1}(r))
=
u_t\bigl(\sigma^{-1}(a^{-1}r))
=
u_{t'}(a^{-1}\cdot r)$,
%\end{equation*}
Now choose a lift $f\in O_p^\times$ of $a^{-1}\in (J_p^n)^\times$ and set $\gamma':=f\,\gamma$.
Then the induced map $\overline{\gamma'}:J_p^n(L)\to J_p^n$ satisfies
$\overline{\gamma'}(r)=a^{-1} \cdot \overline{\gamma}(r)$.
Therefore
\begin{equation}\label{eq:final-step}
(u_t\circ \Xi_g^{-1})\circ \overline{\gamma}
=
u_{t'}\circ \overline{\gamma'}
=
u_{t',\gamma'}.
\end{equation}
\end{enumerate}
Combining \eqref{eq:final-step} with \eqref{eq:tay-closure-2} yields $g\cdot u=u_{t',\gamma'}\in \Iso_{\mathrm{Tay}}(J_p^n(L),\F_q^n)$, as claimed.
\end{proof}

\begin{proposition}\label{prop:tay-simply-transitive}
The restriction of the action $\widehat{\Psi}$ to
$\Iso_{\mathrm{Tay}}(J_p^n(L),\F_q^n)$ is simply transitive. That is, it is free and transitive.
\end{proposition}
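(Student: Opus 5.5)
Freeness follows quickly from faithfulness; transitivity needs an explicit group element, so I treat the two parts separately. Stability of $\Iso_{\mathrm{Tay}}(J_p^n(L),\F_q^n)$ under $\widehat\Psi$ is the preceding lemma and is assumed throughout.

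\emph{Freeness.} Suppose $g\cdot u=u$ for some $u\in\Iso_{\mathrm{Tay}}$. Then $u\circ\Psi_g^{-1}=u$, and since $u$ is an isomorphism this gives $\Psi_g=\mathrm{id}$. Because $\Psi_g=(\overline{\gamma_p})^{-1}\circ\Xi_g\circ\overline{\gamma_p}$, it follows that $\Xi_g=\mathrm{id}_{J^n_p}$. By Lemma \ref{lm:affine-decomposition} the fundamental action is faithful, so $g=(1,\mathrm{id})$. Hence stabilizers are trivial.

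\emph{Transitivity.} Let $u=u_{t,\gamma}$ and $u'=u_{t',\gamma'}$ be two Taylor isomorphisms. I will construct $g=(a,\sigma)$ with $g\cdot u=u'$ by reading the proof of the stability lemma backwards.
\begin{enumerate}
\item Since $\overline t$ and $\overline{t'}$ both generate the maximal ideal $\fm$ of $J^n_p\simeq\F_q[T]/(T^{n+1})$, there is a unique $\F_q$-algebra automorphism $\sigma$ with $\sigma(\overline t)=\overline{t'}$. It is an automorphism because $\overline{t'}=c_1\overline t+\cdots$ with $c_1\neq0$, so it is invertible by the usual triangular argument. Equation \eqref{eq:coeff-change} then gives $u_t\circ\sigma^{-1}=u_{t'}$.
\item Write $\gamma'=f\gamma$ with $f\in O_p^\times$, and let $a\in(J^n_p)^\times$ be the class of $f^{-1}$.
\end{enumerate}
The computation \eqref{eq:tay-closure-2}--\eqref{eq:final-step} then shows $g\cdot u_{t,\gamma}=u_{t',\gamma'}$.

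The step needing care is the bookkeeping between the fixed trivialization $\gamma_p$ used to define $\Psi$ and the trivialization $\gamma$ attached to $u$. In \eqref{eq:tay-closure-2} these were tacitly identified. However, $\overline\gamma$ and $\overline{\gamma_p}$ differ by multiplication by a unit $b\in (J^n_p)^\times$, and multiplication by a unit commutes with multiplication by $a$ but not with $\sigma$. I will redo the identity directly: $\Psi_g^{-1}=\overline{\gamma_p}^{-1}\Xi_g^{-1}\overline{\gamma_p}$, and $\overline\gamma=b\,\overline{\gamma_p}$. Conjugating $\Xi_g^{-1}(r)=\sigma^{-1}(a^{-1}r)$ by $b$ gives $b\,\sigma^{-1}(a^{-1}b^{-1}r)$, which is again of the form $r\mapsto a''\sigma^{-1}(r)$ for some unit $a''$. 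So the twisted action still changes the uniformizer by $\sigma$ and the trivialization by a unit. To hit a prescribed $\gamma'$, I then choose $a$ so that the resulting unit equals the class of $f^{-1}$ computed relative to $\gamma$. This is one equation in $a$ with a unique unit solution.

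Finally, since $J^n_p(L)$ only sees $t$ and $\gamma$ modulo $\fm_p^{n+1}$, all identities need only hold in $J^n_p$. Lifting units to $O_p^\times$ and elements of $\fm\setminus\fm^2$ to uniformizers is always possible, exactly as in the stability lemma.
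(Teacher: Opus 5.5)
Your proof is correct and follows essentially the paper's route: freeness comes from faithfulness of the fundamental action, and transitivity comes from writing the change of Taylor data as the algebra automorphism $\overline t\mapsto\overline{t'}$ combined with multiplication by the unit $\overline f$, so that it is induced by an element of $G_n(p)$. In your notation the paper's element is $(\sigma^{-1}(\overline f),\sigma^{-1})$, and your $g=(\overline f^{-1},\sigma)$ is its inverse and is the one that works: the paper's closing equality $u\circ\phi^{-1}=v$ with $\phi=u^{-1}\circ v$ actually requires $\phi=v^{-1}\circ u$. Your explicit bookkeeping of the fixed $\gamma_p$ versus the trivialization $\gamma$ attached to $u$ also fills in a point the paper leaves tacit.
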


\begin{proof}
We have to show that the action is transitive and free.
Let us prove that the action is free. Fix $u\in \Iso_{\mathrm{Tay}}(J_p^n(L),\F_q^n)$ and suppose $g\cdot u=u$ for some $g\in G_{n}(p)$. Then $u\circ \Psi_g^{-1}=u$, which implies $\Psi_g=id$ since $u$ is an isomorphism.
Write $g=(a,\sigma)$. Under $\overline{\gamma}:J_p^n(L)\xrightarrow{\sim}J_p^n$, the condition $\Psi_g=id$ is equivalent to
$\Xi_g=id$ on $J_p^n$, i.e.
$a\cdot \sigma(r)=r$ for all $r\in J_p^n$.
Taking $r=1$ gives $a=1$. Then $\sigma(r)=r$ for all $r$, hence $\sigma=id$ and therefore $g=e$.
Thus the stabilizer of $u$ is trivial.

It remains to prove that the action is transitive.
Fix $u,v\in \Iso_{\mathrm{Tay}}(J_p^n(L),\F_q^n)$. Choose a uniformizer $t$ and a trivialization $\gamma$ such that $u=u_{t,\gamma}$,
and similarly choose $t',\gamma'$ such that $v=u_{t',\gamma'}$.
Consider the $\F_q$--linear automorphism
$\phi:=u^{-1}\circ v\ \in\ \Aut_{\F_q}(J_p^n(L))$.
Set $\widetilde\phi:=\overline{\gamma}\circ \phi\circ (\overline{\gamma})^{-1}\ \in\ \Aut_{\F_q}(J_p^n)$.
We have  
$\widetilde\phi=\overline{\gamma}\circ \phi\circ \overline{\gamma}^{-1}= u_t^{-1}\circ u_{t'}\circ (\overline{\gamma'}\circ \overline{\gamma}^{-1})$.
Since $\gamma' = f\,\gamma$ for a unique $f\in O_p^\times$, we have $\overline{\gamma'}\circ \overline{\gamma}^{-1} =\times \overline{f}\quad \text{on } J_p^n$,
Defining 
\[
\sigma:=u_t^{-1}\circ u_{t'}\ \in\ \Aut_{\F_q\text{-alg}}(J_p^n).
\]
and setting $a:=\overline{f}\in (J_p^n)^\times$, we obtain
\[
\widetilde\phi(r)=\sigma(a\,r)=\bigl(\sigma(a)\bigr)\cdot \sigma(r).
\]
Thus $\widetilde\phi=\Xi_{(a',\sigma)}$ with $a':=\sigma(a)\in (J_p^n)^\times$.
Let $g:=(a',\sigma)\in G_n(p)$. Then $\Psi_g=(\overline{\gamma})^{-1}\circ \Xi_g\circ \overline{\gamma}=\phi$, and hence
$g\cdot u \;=\; u\circ \Psi_g^{-1} \;=\; u\circ \phi^{-1} \;=\; v$.
\end{proof}

\subsubsection{On the density of Taylor trivializations}

Fix a trivialization $\gamma_p:L_p\simeq O_p$ and a uniformizer $t_p$ at $p$. The choice of $(t_p,\gamma_p)$ defines a bijection
\begin{equation}\label{eq: Iso-Gln}
\begin{split}
\Phi_{t_p,\gamma_p}:\Iso_{\mathbb{F}_{q}}(J^n_p(L),\mathbb{F}_{q}^n) & \xrightarrow{\ \sim\ }\GL_n(\mathbb{F}_{q})\\
u & \longmapsto u\circ u_{t_p,\gamma_p}^{-1}.
\end{split}
\end{equation}
where $u_{t_p,\gamma_p}$ is defined as in \ref{eq:iso-params}.
On the other hand, $\overline{\gamma_p}$ induces an action 
$$
\Psi:G_n(p)\rightarrow \textrm{Aut}(J^n_p(L))
$$  
through the fundamental action as follows:
$g=(a,\sigma)\mapsto \Psi_g=(\overline{\gamma_p})^{-1}\circ \Xi_g \circ \overline{\gamma_p}$
where $\Xi_g(r)=a\cdot \sigma(r)$ (see \ref{eq:fundamental action}). Recall that the action of $G_n(p)$ on the set of trivializations $\Iso_{\mathbb{F}_{q}}(J^n_p(L), \mathbb{F}_{q}^n)$ is therefore given by 
\begin{equation*}
\begin{split}
\widehat{\Psi}:G_n(p) \times \Iso_{\mathbb{F}_{q}}(J^n_p(L),\mathbb{F}_{q}^n) & \rightarrow \Iso_{\mathbb{F}_{q}}(J^n_p(L),\mathbb{F}_{q}^n)\\
(g,u) & \mapsto (g \cdot u) := u \circ \Psi_g^{-1}
\end{split}
\end{equation*}
\begin{lemma}
The map $\Phi_{t_p,\gamma_p}$ is $\rho_{t_p}$-equivariant, that is,
\[
\Phi_{t_p,\gamma_p}(g\cdot u)
=
\Phi_{t_p,\gamma_p}(u) \cdot \rho_{t_p}(g)^{-1}
\]
for all  $g\in G_n,\ u\in \Iso_{\mathbb{F}_{q}}(J^n_p(L),\mathbb{F}_{q}^n)$, where the right-hand side denotes the standard matrix multiplication.
\end{lemma}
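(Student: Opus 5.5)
The plan is to unwind the definitions and reduce everything to one identity relating $\Psi_g$ and $\rho_{t_p}(g)$. By definition, $\Phi_{t_p,\gamma_p}(g\cdot u)=u\circ\Psi_g^{-1}\circ u_{t_p,\gamma_p}^{-1}$. I will insert the identity $u_{t_p,\gamma_p}^{-1}\circ u_{t_p,\gamma_p}$ between $u$ and $\Psi_g^{-1}$. This rewrites the expression as
\[
\Phi_{t_p,\gamma_p}(g\cdot u)=\bigl(u\circ u_{t_p,\gamma_p}^{-1}\bigr)\circ\bigl(u_{t_p,\gamma_p}\circ\Psi_g^{-1}\circ u_{t_p,\gamma_p}^{-1}\bigr)=\Phi_{t_p,\gamma_p}(u)\circ\bigl(u_{t_p,\gamma_p}\circ\Psi_g\circ u_{t_p,\gamma_p}^{-1}\bigr)^{-1}.
\]
It therefore suffices to show that $u_{t_p,\gamma_p}\circ\Psi_g\circ u_{t_p,\gamma_p}^{-1}=\rho_{t_p}(g)$ as endomorphisms of $\F_q^n$.

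For this identity, I will use $u_{t_p,\gamma_p}=u_{t_p}\circ\overline{\gamma_p}$, which is how the map in \eqref{eq:iso-params} is built. Together with $\Psi_g=(\overline{\gamma_p})^{-1}\circ\Xi_g\circ\overline{\gamma_p}$, the factors $\overline{\gamma_p}$ cancel and we get
\[
u_{t_p,\gamma_p}\circ\Psi_g\circ u_{t_p,\gamma_p}^{-1}=u_{t_p}\circ\Xi_g\circ u_{t_p}^{-1}.
\]
The right-hand side is, by definition, the matrix $\rho_{t_p}(g)$ of the fundamental action in the basis $\{1,t_p,\dots,t_p^{n-1}\}$; its explicit form is the one given in Lemma \ref{lm:action local parameters}.

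The one point needing care is conventions. The linear maps $\F_q^n\to\F_q^n$ must be identified with matrices acting on the same side as in Lemma \ref{lm:action local parameters}, where $h_m=\sum_j(\rho_{t_p})_{m,j}r_j$, i.e.\ column vectors. Under that convention, composition of maps corresponds to the standard matrix product in the same order, so the displayed computation reads $\Phi_{t_p,\gamma_p}(u)\cdot\rho_{t_p}(g)^{-1}$. I will state this identification explicitly, and note that $\rho_{t_p}$ is a homomorphism because $\Xi$ is an action, so $\rho_{t_p}(g^{-1})=\rho_{t_p}(g)^{-1}$.

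The main obstacle is this bookkeeping rather than any real computation: checking that the row-vector convention of Lemmas \ref{lm:change of uniformizer at a point} and \ref{lm:change of trivialization at a point} does not clash with the column convention used for $\rho_{t_p}$. If it does clash, the statement should be read with the transpose, and I would add a remark to that effect.
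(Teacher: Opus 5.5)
Your argument is correct and is essentially the paper's proof: insert $u_{t_p,\gamma_p}^{-1}\circ u_{t_p,\gamma_p}$ and identify $u_{t_p,\gamma_p}\circ\Psi_g\circ u_{t_p,\gamma_p}^{-1}$ with $\rho_{t_p}(g)$. You also make explicit the cancellation of $\overline{\gamma_p}$, which the paper leaves implicit. Your worry about conventions is moot, since only the column convention of Lemma~\ref{lm:action local parameters} enters. You also rightly stop at $\Phi_{t_p,\gamma_p}(u)\,\rho_{t_p}(g)^{-1}$; the paper adds a final equality with $\rho_{t_p}(g)\,\Phi_{t_p,\gamma_p}(u)$, which does not hold in general.
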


\begin{proof}
Fix $g=(a,\sigma)\in G_n(p)$.
By definition of the representation $\rho_{t_p}$, the matrix representing $\Xi_g$ in the basis $\{1, t_p, \dots, t_p^{n-1}\}$ is exactly $\rho_{t_p}(g)$. In terms of the isomorphism $u_{t_p, \gamma_p}$ defined in \eqref{eq:iso-params}, this is equivalent to
$u_{t_p, \gamma_p} \circ \Psi_g \circ u_{t_p, \gamma_p}^{-1} = \rho_{t_p}(g)$.
The action of $G_n(p)$ on the set of trivializations $\Iso_{\mathbb{F}_{q}}(J^n_p(L), \mathbb{F}_{q}^n)$ is given by the rule $(g \cdot u) := u \circ \Psi_g^{-1}$. Applying  $\Phi_{t_p, \gamma_p}$ to this action, we have
\begin{align*}
\Phi_{t_p, \gamma_p}(g \cdot u) &= (g \cdot u) \circ u_{t_p, \gamma_p}^{-1} \\
&= (u \circ \Psi_g^{-1}) \circ u_{t_p, \gamma_p}^{-1}\\
&= u \circ (\Psi_g^{-1} \circ u_{t_p, \gamma_p}^{-1}) \\
&= u \circ (u_{t_p, \gamma_p}^{-1} \circ u_{t_p, \gamma_p} \circ \Psi_g^{-1} \circ u_{t_p, \gamma_p}^{-1}) \\
&= (u \circ u_{t_p, \gamma_p}^{-1}) \circ (u_{t_p, \gamma_p} \circ \Psi_g^{-1} \circ u_{t_p, \gamma_p}^{-1})\\
&= \Phi_{t_p, \gamma_p}(u) \circ \rho_{t_p}(g)^{-1}=\rho_{t_p}(g)\,\Phi_{t_p,\gamma_p}(u).
\end{align*}
\end{proof}

\begin{lemma}\label{lm:taylor-orbit}
It holds
$\Iso_{\mathrm{Tay}}(J^n_p(L),\mathbb{F}_{q}^n) = \Phi_{t_p,\gamma_p}^{-1}\left( \rho_{t_p}(G_n(p)) \right)$.
That is, $\Iso_{\mathrm{Tay}}(J^n_p(L),\mathbb{F}_{q}^n)$ is a single $G_n(p)$-orbit in $\Iso_{\mathbb{F}_{q}}(J^n_p(L),\mathbb{F}_{q}^n)$. In particular, $\Iso_{\mathrm{Tay}}(J^n_p(L),\mathbb{F}_{q}^n)\subsetneq \Iso_{\mathbb{F}_{q}}(J^n_p(L),\mathbb{F}_{q}^n)$. Furthermore, 
$$
\lim_{n\to\infty}\dfrac{|\Iso_{\mathrm{Tay}}(J^n_p(L),\mathbb{F}_{q}^n)|}{|\Iso_{\mathbb{F}_{q}}(J^n_p(L),\mathbb{F}_{q}^n)|}=0
$$
\end{lemma}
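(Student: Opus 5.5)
The plan has three parts: identify the Taylor trivializations as a single orbit, show that orbit is a proper subset, and then count.

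\emph{Identifying the orbit.} By Proposition~\ref{prop:tay-simply-transitive}, $\Iso_{\mathrm{Tay}}(J^n_p(L),\F_q^n)$ is one $G_n(p)$-orbit. Since $u_{t_p,\gamma_p}$ is itself a Taylor isomorphism, this orbit is $G_n(p)\cdot u_{t_p,\gamma_p}$. Next I apply the equivariance lemma just proved, together with $\Phi_{t_p,\gamma_p}(u_{t_p,\gamma_p})=\mathrm{Id}$. This gives
\[
\Phi_{t_p,\gamma_p}(g\cdot u_{t_p,\gamma_p})=\rho_{t_p}(g)^{-1}.
\]
As $g$ ranges over the group $G_n(p)$, the matrix $\rho_{t_p}(g)^{-1}$ ranges over all of $\rho_{t_p}(G_n(p))$. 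Since $\Phi_{t_p,\gamma_p}$ is a bijection, the equality $\Iso_{\mathrm{Tay}}=\Phi_{t_p,\gamma_p}^{-1}(\rho_{t_p}(G_n(p)))$ follows.

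\emph{Properness.} By Lemma~\ref{lm:action local parameters}, every $\rho_{t_p}(g)=S_p(a)T_p(\sigma)$ is a product of lower triangular matrices, so it is lower triangular. I will also note that its diagonal entries are $a_0c_1^{m}$. For $n\ge 2$, any matrix in $\GL_n(\F_q)$ that is not lower triangular (for instance a permutation matrix) lies outside the image. Hence the inclusion is strict. I will state this for $n\ge2$ and check the degenerate case $n=1$ separately: there the ambient group is $\F_q^\times$ and everything is triangular, so the properness claim should be read for $n\ge 2$.

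\emph{The limit.} Because $\Phi_{t_p,\gamma_p}$ is a bijection and the action is free, I get
\[
|\Iso_{\mathrm{Tay}}|=|G_n(p)|=|(J^n_p)^\times|\cdot|\Aut(J^n_p)|.
\]
Here $J^n_p\simeq\F_q[t]/(t^n)$ in the paper's indexing, with basis $1,\dots,t^{n-1}$. I then bound the two factors:
\begin{itemize}
\item $|(J^n_p)^\times|=(q-1)q^{n-1}$, since a unit is a truncated power series with nonzero constant term;
\item an automorphism is determined by $\sigma(t)=\sum_{r=1}^{n-1}c_rt^r$ with $c_1\ne0$, so $|\Aut(J^n_p)|=(q-1)q^{n-2}$.
\end{itemize}
Thus $|G_n(p)|\le q^{2n}$. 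On the other side, $|\GL_n(\F_q)|=\prod_{i=0}^{n-1}(q^n-q^i)\ge c\,q^{n^2}$ with $c=\prod_{i\ge1}(1-q^{-i})>0$. The ratio is therefore at most $c^{-1}q^{2n-n^2}$, which tends to $0$.

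I expect the main care point to be the mismatch between $J^n_p$ as $O_p/\fm_p^{n+1}$ in the definition and its use with $n$ basis elements. I will fix the convention that matches $\F_q^n$ and note that the other choice only shifts the count by $O(n)$ in the exponent. Either way the order of the group grows like $q^{O(n)}$ against $q^{n^2}$, so the limit is unaffected.
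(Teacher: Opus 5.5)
Your proof is correct. On the first and last parts it follows the paper. The orbit identification is what the paper means by ``follows from Proposition~\ref{prop:tay-simply-transitive}''; you simply spell out the step through the equivariance lemma and $\Phi_{t_p,\gamma_p}(u_{t_p,\gamma_p})=\mathrm{Id}$. The limit is the same cardinality comparison in both.

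The genuine difference is in how you get properness. The paper derives it from $|G_n(p)|\ll|\GL_n(\F_q)|$. You instead use that $\rho_{t_p}(G_n(p))$ consists of lower triangular matrices. That is a structural reason, and it produces explicit non-Taylor isomorphisms such as $\Phi_{t_p,\gamma_p}^{-1}$ of a non-identity permutation matrix.

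Your count is also more careful than the paper's. The paper takes $|(J^n_p)^\times|=(q-1)q^{n-1}$, which is the count for $\F_q[t]/(t^n)$. It also takes $|\Aut_{\F_q\text{-alg}}(J^n_p)|=(q-1)q^{n-1}$, which is the count for $\F_q[t]/(t^{n+1})$. This is exactly the convention clash you anticipated. With the convention matching $\F_q^n$, one gets $|G_n(p)|=(q-1)^2q^{2n-3}$ for $n\ge 2$, as in your computation.

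The discrepancy is harmless for the limit, but it matters at $n=1$. There the paper's formula would give $(q-1)^2>q-1=|\GL_1(\F_q)|$ when $q>2$. In fact $\rho_{t_p}(G_1(p))=\F_q^\times=\GL_1(\F_q)$, so the inclusion is an equality. Your restriction of the strictness claim to $n\ge 2$ is therefore necessary, not just cautious.
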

\begin{proof}
The first part follows from Proposition \ref{prop:tay-simply-transitive}.
Now, from (\ref{eq: Iso-Gln}) it follows
$$
|\Iso_{\mathbb{F}_{q}}(J^n_p(L),\mathbb{F}_{q}^n)| = |\GL_n(\mathbb{F}_{q})| = \prod_{i=0}^{n-1}(q^n-q^i).
$$
On the other hand, $|(J^n_p)^\times|=(q-1)q^{n-1}$, $|\Aut_{\mathbb{F}_{q}-alg}(J^n_p)|=(q-1)q^{n-1}$,
and therefore
\[
|G_n|=(q-1)^2 q^{2n-2}.
\]
Thus $|\Iso_{\mathrm{Tay}}(J^n_p(L),\mathbb{F}_{q}^n)|=|G_n|\ll |\GL_n(\mathbb{F}_{q})|=|\Iso_{\mathbb{F}_{q}}(J^n_p(L),\mathbb{F}_{q}^n)| $, and obviously.
$$
\lim_{n\to\infty}\dfrac{|\Iso_{\mathrm{Tay}}(J^n_p(L),\mathbb{F}_{q}^n)|}{|\Iso_{\mathbb{F}_{q}}(J^n_p(L),\mathbb{F}_{q}^n)|}=\lim_{n\to\infty}\dfrac{(q-1)^2 q^{2n-2}}{ \prod_{i=0}^{n-1}(q^n-q^i)}=0
$$
\end{proof}

\subsection{Some remarks}\label{rmk:algebraic-structure}
\begin{enumerate}
\item It is important to recall Remark \ref{rmk: RT}. Given the data $X,D,L$ with $D=n_1 p_1+\cdots+n_s p_s$, one arrives naturally at the $\mathbb{F}_{q}$-linear map
$$
H^0(X,L)\rightarrow L|_D=\oplus_{i=1}^s J^{n_i}_{p_i}(L).
$$
By fixing trivializations $\gamma_i\in \Iso_{\mathbb{F}_{q}}(J^{n_i}_{p_i}(L),\mathbb{F}_{q}^n)$, this yields a $\mathbb{F}_{q}$-linear map
$$
H^0(X,L)\rightarrow L|_D=\mathbb{F}_{q}^{M}, \ M:=\sum_{i=1}^s n_i,
$$
whose image can be regarded as a Goppa code associated to a divisor $D$ having multiplicities different from one, let us say a {multiplicity Goppa code}. The class of differential Goppa codes introduced and explored so far forms a subclass of multiplicity Goppa codes, but a rather special one. This is exactly what Lemma \ref{lm:taylor-orbit} states. While multiplicity Goppa codes are obtained from arbitrary trivializations ($\gamma_i\in \Iso_{\mathbb{F}_{q}}(J^{n_i}_{p_i}(L),\mathbb{F}_{q}^n)$), differential Goppa codes are obtained from Taylor trivializations ($\gamma_i\in \Iso_{\mathrm{Tay}}(J^{n_i}_{p_i}(L),\mathbb{F}_{q}^n)$) which are highly exceptional among generic ones.
\item It is straightforward to show that the set $\Iso^{t}_{\mathrm{Tay}}(J^{n}_{p}(L),\mathbb{F}_{q}^n)$ is an affine variety. Indeed, it is an open subset of an affine subspace of $\Mat_{k \times M}(\mathbb{F}_q)$ defined by the Toeplitz and non-vanishing constant term conditions. 
Furthermore, the action of the Taylor group $G_n(p)$ (which is an affine algebraic group) on the jet spaces is a morphism of varieties. Consequently, the representation $\rho_{t_p}: G_n(p) \to \GL_n(\mathbb{F}_q)$ is a regular map, meaning its matrix entries are polynomial functions of the group coordinates.
\end{enumerate}

\section{Examples}

\subsection{Projective line}

Fix a natural number $k\geq 1$. Let $X = \mathbb{P}^1:=\textrm{Proj}\F_q[x,y]$ be the projective line and fix the invertible sheaf $L = \mathcal{O}((k-1)p_\infty)$ with $p_{\infty}=[1;0]$. Set $t:=x/y$. It is well-known that a basis of $H^0(\mathbb{P}^1,L)$ is given by $\{1,t,\hdots,t^{k-1}\}$.

 \subsubsection{One point case}\label{sec:one point}
Assume  $D = n \cdot p_{\infty}$. In this case, the rational function $t$ induces both, the trivialization $\gamma:L_{p_{\infty}} \rightarrow O_{p_{\infty}}$, $s \mapsto t^{-k+1}s$, and the uniformizer at $p_{\infty}$, $u:=t^{-1}$. For any $t^m$,  the evaluation map $\ev_{t_D,\gamma_D}:H^0(\mathbb{P}^1,L)\rightarrow\F_q^n$ is given by:
\[
ev_{t_D,\gamma_D}(t^m) = \left( D_u^{0}(t^m)(p_{\infty}), D_u^{1}(t^m)(p_{\infty}), \dots, D_u^{n-1}(t^m)(p_{\infty}) \right).
\]
Since $\gamma(t^m)=u^{k-m-1}$, it hods
\[
D_u^{j}(\gamma(t^m))(p_{\infty}) = \delta_{j,k-m-1}:=
\left\{
\begin{array}{ll}
1, & \textrm{ if }j=k-m-1,\\
0, & \textrm{ otherwise.}
\end{array}
\right.
\]
Therefore the generator matrix $G(\infty)\in\mathbb{F}_q^{k\times n}$  is given by either
\[
G(\infty)=\bigl[J_k \ \big|\ 0_{k\times (n-k)}\bigr], \textrm{ if }n\geq k
\]
or 
\[
G(\infty)=
\begin{pmatrix}
0_{(k-n)\times n}\\[2mm]
J_n
\end{pmatrix},
\textrm{ if } n\le k,
\]
where $J_i$ is the $i\times i$ anti-diagonal matrix
\[
J_i=
\begin{pmatrix}
0 & 0 & \cdots & 0 & 1\\
0 & 0 & \cdots & 1 & 0\\
\vdots & \vdots & \ddots & \vdots & \vdots\\
0 & 1 & \cdots & 0 & 0\\
1 & 0 & \cdots & 0 & 0
\end{pmatrix}.
\]
In particular, if $n=k$, $G(\infty)$ is invertible. Equivalently, $ev_{t_D,\gamma_D}$ is an isomorphism.

\subsubsection{Two points case}\label{sec:two points}
Assume now $D = n \cdot p$ with $n\geq k$ and where $p\neq p_{\infty}$ is the point with affine coordinate $\alpha \in \mathbb{F}_q$. 
In this case $L_p=O_p$ so we may choose $id$ as trivialization of $L_p$.
On the other hand, the rational function $u = t - \alpha$ is a uniformizer at $p$. 
For any $t^m$, we already know that the evaluation map $ev_{t_D,\gamma_D}:H^0(\mathbb{P}^1,L)\rightarrow\F_q^n$ is given by:
\[
ev_{t_D,\gamma_D}(t^m) = \left( D_u^{0}(t^m)(\alpha), D_u^{1}(t^m)(\alpha), \dots, D_u^{n-1}(t^m)(\alpha) \right).
\]
Using the binomial expansion $t^m = ( (t-\alpha) + \alpha )^m$, we find that:
\[
D_u^{j}(t^m)(\alpha) = \binom{m}{j} \alpha^{m-j},
\]
with the convention that $\binom{m}{j} = 0$ if $j > m$. The $m$-th row of the generator matrix, which we denote $G(\alpha)$, corresponds to the evaluation of $t^m$. 
Therefore the generator matrix $G(\alpha)\in\mathbb{F}_q^{k\times n}$  has entries
\[
G(\alpha)_{m,j}=\binom{m}{j}\alpha^{m-j},
\qquad \text{with }\binom{m}{j}=0\text{ if }j>m,
\]
and is given by either
\[
G(\alpha)=\bigl[P_k(\alpha)\ \big|\ 0_{k\times (n-k)}\bigr], \textrm{ if }n\geq k,
\]
or
\[
G(\alpha)=
\begin{pmatrix}
P_n(\alpha)\\[2mm]
R_{(k-n)\times n}(\alpha)
\end{pmatrix},
\textrm{ if } n\le k,
\]
where $P_r(\alpha)$ denotes the $r\times r$ lower triangular Pascal matrix scaled by powers of $\alpha$,
\[
P_r(\alpha):=\Bigl(\binom{m}{j}\alpha^{m-j}\Bigr)_{0\le m,j\le r-1}
=
\begin{pmatrix}
1 & 0 & \cdots & 0\\
\alpha & 1 & \cdots & 0\\
\alpha^2 & \binom{2}{1}\alpha & \ddots & \vdots\\
\vdots & \vdots & \ddots & 0\\
\alpha^{r-1} & \binom{r-1}{1}\alpha^{r-2} & \cdots & 1
\end{pmatrix},
\]
and $R_{(k-n)\times n}(\alpha)$ is the $(k-n)\times n$ matrix given  by
\[
R_{(k-n)\times n}(\alpha):=\Bigl(\binom{m}{j}\alpha^{m-j}\Bigr)_{n\le m\le k-1,\;0\le j\le n-1}.
\]
Note that $G(\alpha)$ is full rank. In particular, when $k=n$, the evaluation map is an isomorphism and $G(\alpha)$
 is invertible.
 
 \subsubsection{General case}
Assume now $D=\sum_{i=1}^s n_i p_i$,  $\sum_{i=1}^s n_i=n$ where $p_1=p_\infty$ and for $i\ge 2$ the point $p_i$ has affine coordinate
$\alpha_i\in\mathbb F_q$ (so $p_i=[\alpha_i:1]$).  We order the $n$ coordinates of
$\mathbb F_q^n$ by blocks, the $i$-th block corresponding to the derivatives
$D_{u_i}^j(\cdot)(p_i)$ for $j=0,\dots,n_i-1$.

At $p_\infty$ we take the uniformizer $u_1:=t^{-1}$ and the trivialization $\gamma_1:L_{p_\infty}\to O_{p_\infty},$  $s\mapsto t^{-k+1}s$.
For $i\ge 2$ we take the uniformizer at $p_i$, $u_i:=t-\alpha_i$, and we may use
the canonical trivialization $\gamma_i=\mathrm{id}$ since $p_i\neq p_\infty$. 
Then, the generator matrix $G$ decomposes into blocks
\[
G=\bigl[G{(\infty)}\ \big|\ G{(\alpha_2)}\ \big|\ \cdots\ \big|\ G{(\alpha_s)}\bigr].
\]
where $G{(\infty)}$ and $G{(\alpha_i)}$ are computed as in \S \ref{sec:one point} and \S \ref{sec:two points}. 
 
 Of course, we may consider also the case $p_i\neq p_{\infty}$ for all $i$. In this situation, the generator matrix is just
\[
G=\bigl[G{(\alpha_1)}\ \big|\ G{(\alpha_2)}\ \big|\ \cdots\ \big|\ G{(\alpha_s)}\bigr].
\]

\subsubsection{Genus $0$ differential Goppa codes in the literature}

Our aim now is to show connections between the proposed framework and established constructions in coding theory. 

In the works \cite{xu2023} and \cite{xu2024near}, classical Reed--Solomon codes are extended by adjoining standard basis column vectors of the following form.
\[
V_i = [0,\dots,0,1,0,\dots,0].
\]
These extensions yield Near Maximum Distance Separable (NMDS) codes that admit no realization as classical Goppa codes of genus $0$ or $1$ anymore. However, this is no longer true when considering differential Goppa codes. In this context, the same constructions admit a differential Goppa structure over the projective line $\mathbb{P}^1$ as follows.

 Let $X=\mathbb{P}^1$, let $L=\mathcal{O}_X((k-1)p_\infty)$, and define the divisor
\[
D = 2p_\infty + 2p_0 + \sum_{i=1}^{q-1} p_i .
\]
For $k=4$ or $k=5$, which correspond to the two cases analyzed in \cite{xu2023}, the associated generator matrix takes the following explicit form.
\begin{equation*}
    G_4=
    \begin{bmatrix}
        \alpha_1^{3} & \cdots & \alpha_q^{3} & 1 & 0 & 0\\
        \alpha_1^{2} & \cdots & \alpha_q^{2} & 0 & 1 & 0\\
        \alpha_1     & \cdots & \alpha_q     & 0 & 0 & 1\\
        1            & \cdots & 1            & 0 & 0 & 0
    \end{bmatrix},
    \qquad \alpha_i \neq \alpha_j \in \mathbb{F}_q .
\end{equation*}
Thus, the extended NMDS codes arise naturally from the cohomological description of jet sheaves on $\mathbb{P}^1$, eliminating the need for increasing the genus of the base curve.

A similar phenomenon is observed for the Roth--Lempel codes studied in \cite{han2023roth}. These codes were presented as examples that cannot be obtained from classical Goppa constructions on the projective line or on elliptic curves. Within this framework, however, they admit a  realization as differential Goppa codes on $\mathbb{P}^1$. Again, let $L=\mathcal{O}_X((k-1)p_\infty)$ and define the divisor
\[
D = 2p_\infty + \sum_{i=0}^{q-1} p_i ,
\]
The induced morphism $H^0(\Theta_D)$ yields a generator matrix of the following form.
\begin{equation}\label{M: Roth lemper}
    R_k=
    \begin{bmatrix}
        \alpha_1^{k-1} & \cdots & \alpha_q^{k-1} & 1 & 0\\
        \vdots &  & \vdots & 0 & 1\\
        \vdots &  & \vdots & 0 & 0\\
        \vdots &  & \vdots & \vdots & \vdots\\
        \alpha_1 & \cdots & \alpha_q & 0 & 0\\
        1 & \cdots & 1 & 0 & 0
    \end{bmatrix},
    \qquad \alpha_i \neq \alpha_j \in \mathbb{F}_q .
\end{equation}
This representation demonstrates that the Roth--Lempel family also fits naturally within the differential Goppa formalism.

Although a complete verification of the structural properties requires a detailed analysis comparable to that in the cited works, the general framework presented here clarifies the geometric origin of these constructions. This approach replaces case-by-case or computational arguments with a uniform description in terms of line bundles and divisors on $\mathbb{P}^1$.

\subsection{Elliptic curves}

Assume $\mathrm{char}(\F_q)\neq 2,3$ for simplicity.
Fix natural numbers $1\le k\le n$ and let $E\subset \mathbb{P}^{2}= \textrm{Proj}\ \F_q[X,Y,Z]$ be an elliptic curve in
Weierstrass form 
$$
y^2=x^3+Ax+B, \quad A,B\in\F_q,
$$
where  $x=X/Z$ and $y=Y/Z$ are the affine coordinates on the open subset $Z\neq 0$ of $\mathbb{P}^{2}$. Note that the point at infinity is $e=[0;1;0]$.  Consider now the invertible sheaf
$L= O_E((k-1)e)$. It is well-known that a basis of $H^0(E,L)$ is
\[
\mathcal B_k=\{1,x,\dots,x^{a},\ y,xy,\dots,x^{b}y\},
\]
where 
\[
a:=\Big\lfloor \frac{k-1}{2}\Big\rfloor,\qquad b:=\Big\lfloor \frac{k-4}{2}\Big\rfloor.
\]

%On an elliptic curve one has $\ord_O(x)=-2$ and $\ord_O(y)=-3$. 

%%%%%%%%%%%%%%%%%%%%%%%%%%%%%%%%%%%%%%%%%%%%%%%%%%%%%%%%%%%%%%%%%%%%%%%%%%%%%%%%%%%
\subsubsection{One point case}\label{sec:on point genus one}
%%%%%%%%%%%%%%%%%%%%%%%%%%%%%%%%%%%%%%%%%%%%%%%%%%%%%%%%%%%%%%%%%%%%%%%%%%%%%%%%%%%

Assume $D=n\cdot e$. The rational function $u:=-\frac{x}{y}$ is a uniformizer at $e\in E$. On the other hand, there is an obvious trivialization of $L$ at $e$,
$\gamma:L_e\rightarrow O_{e}$, $s\mapsto u^{\,k-1}s$.
Thus, for every $s\in H^0(E,L)$, we have
\[
ev_{t_D,\gamma_D}(s)=\bigl(D_u^0(\gamma(s))(e),\ D_u^1(\gamma(s))(e),\ \dots,\ D_u^{n-1}(\gamma(s))(e)\bigr)\in\F_q^n.
\]
Since $\ord_e(x)=-2$ and $\ord_e(y)=-3$, we have Laurent expansions at $e$ with respect to $u$
\begin{equation*}
\begin{split}
x&=u^{-2}+(\text{higher powers of }u),\\ 
y&=-u^{-3}+(\text{higher powers of }u),
\end{split}
\end{equation*}
hence
\begin{equation*}
\begin{split}
\gamma(x^m)&=u^{k-1-2m}\bigl(1+\cdots\bigr),\\
\gamma(x^m y)&=u^{k-1-(2m+3)}\bigl(-1+\cdots\bigr).
\end{split}
\end{equation*}
In particular, each $\gamma(b_r)$ has a well-defined lowest term
$\varepsilon_r\,u^{\rho_r}$ with $\varepsilon_r\in\{\pm 1\}$ and $\rho_r\ge 0$,
so that
\[
D_u^j(\gamma(b_r))(e)=0\ \text{ for }j<\rho_r,\qquad
D_u^{\rho_r}(\gamma(b_r))(e)=\varepsilon_r\in\{\pm 1\}.
\]
Reordering the elements of $\mathcal{B}_k$ by increasing order at $e$, we may assume $\rho_1\leq \rho_2 \leq \dots \leq \rho_k$. If we consider the submatrix $G'$ of $G$ formed by the rows $j \in \{\rho_1, \dots, \rho_k\}$, then $G'$ is upper triangular with diagonal entries $\varepsilon_r \in \{\pm 1\}$,

\[
G' = 
\begin{pmatrix}
\varepsilon_1 & * & * & \cdots & * \\
0 & \varepsilon_2 & * & \cdots & * \\
0 & 0 & \varepsilon_3 & \cdots & * \\
\vdots & \vdots & \vdots & \ddots & \vdots \\
0 & 0 & 0 & \cdots & \varepsilon_k
\end{pmatrix},
\]

\noindent
where $*$ denotes (possibly) nonzero entries. In particular, the submatrix $G'$ has full rank $k$, which implies that the original matrix $G$ has rank $k$. Consequently, if $n = k$, the matrix $G$ is invertible.

%%%%%%%%%%%%%%%%%%%%%%%%%%%%%%%%%%%%%%%%%%%%%%%%%%%%%%%%%%%%%%%%%%%%%%%%%%%%%%%%%%%
\subsubsection{Two points case}\label{sec: two points genus one}
%%%%%%%%%%%%%%%%%%%%%%%%%%%%%%%%%%%%%%%%%%%%%%%%%%%%%%%%%%%%%%%%%%%%%%%%%%%%%%%%%%%

Assume now $D=n\cdot p$ where $p=(\alpha,\beta)\in E$ is a rational point with $p\neq e$. For simplicity assume $\beta\neq 0$. Then
\[
v:=x-\alpha
\]
is a uniformizer at $p$, and since $p\neq e$ we may use the identity as trivialization of $L_p$ (i.e., $\gamma=\mathrm{id}$).
The evaluation map is
\[
ev_{t_D,\gamma_D}(f)=\bigl(D_v^0(f)(p),\ D_v^1(f)(p),\ \dots,\ D_v^{n-1}(f)(p)\bigr)\in\F_q^n.
\]
Consider the Taylor expansion of $y$ at $p$ with respect to $v$, 
$$y(v)=\beta+\sum_{r\ge 1} c_r v^r\in \F_q[[v]]$$
Clearly, the coefficients of the expansion are determined by the equation
\[
\bigl(\beta+\sum_{r\ge 1}c_r v^r\bigr)^2=(\alpha+v)^3+A(\alpha+v)+B.
\]
Equating coefficients yields the recursion (with $c_0:=\beta$)
\begin{equation*}
\begin{split}
 2\beta\,c_1&=3\alpha^2+A,\\
 2\beta\,c_2+c_1^2&=3\alpha,\\
2\beta\,c_3+2c_1c_2&=1, \\
2\beta\,c_m+\sum_{r=1}^{m-1}c_rc_{m-r}&=0,  \quad \textrm{ for $m\ge 4$}.
\end{split}
\end{equation*}
The generator matrix $G\in\F_q^{k\times n}$ is defined by $G_{r,j}=D_v^j(b_r)(p)$, with $r=1,\dots,k$, $j=0,\dots,n-1$.
Its entries can be written explicitly as follows. For $m\ge 0$,
\begin{equation}\label{eq:derivatives genus one}
\begin{split}
D_v^j(x^m)(p)&=\binom{m}{j}\,\alpha^{m-j}\\ 
D_v^j(x^m y)(p)&=D_v^j\big((\alpha+v)^m(\beta+\sum_{r\ge 1}c_r v^r)\big)(0)\\
&=\sum_{s=0}^{\min(j,m)} \binom{m}{s}\,\alpha^{m-s}\,c_{j-s}.
\end{split}
\end{equation}
In particular, the first rows (corresponding to $1,x,\dots$) form a scaled
Pascal-type block:
\[
\begin{pmatrix}
1 & 0 & 0 & \cdots \\
\alpha & 1 & 0 & \cdots \\
\alpha^2 & \binom21\alpha & 1 & \cdots \\
\vdots & \vdots & \vdots & \ddots
\end{pmatrix},
\]
while the row for $y$ is
\[
\bigl(\beta,\ c_1,\ c_2,\ \dots,\ c_{n-1}\bigr),
\]
and the row for $xy$ is
\[
\bigl(\alpha\beta,\ \beta+\alpha c_1,\ c_1+\alpha c_2,\ \dots\bigr),
\]
and so on according to the formula \eqref{eq:derivatives genus one}.

%%%%%%%%%%%%%%%%%%%%%%%%%%%%%%%%%%%%%%%%%%%%%%%%%%%%%%%%%%%%%%%%%%%%%%%%%%%%%%%%%%%
\subsubsection{Several points case}
%%%%%%%%%%%%%%%%%%%%%%%%%%%%%%%%%%%%%%%%%%%%%%%%%%%%%%%%%%%%%%%%%%%%%%%%%%%%%%%%%%%

Assume finally that $D=\sum_{i=1}^s n_i p_i$, $\sum_{i=1}^s n_i=n$, where $p_i\in E$ are pairwise distinct rational points and we allow $p_1=e$. 
%We order the $n$ coordinates of $\F_q^n$ by blocks, the $i$-th block being indexed by $j=0,\dots,n_i-1$.
%\paragraph{Local data.}
If $p_i=e$, take $u_i=-x/y$ as uniformizer and trivialize by $\gamma_i(s)=u_i^{k-1}s$ as in the
one point case. If $p_i\neq e$ and $p_i=(\alpha_i,\beta_i)$ with $\beta_i\neq 0$,
take $u_i=x-\alpha_i$ as uniformizer and use the canonical trivialization $\gamma_i=id$.

%\paragraph{Evaluation map.}
%For $f\in H^0(E,L)$ define
%\[
%ev_{D,\gamma}(f)=\Bigl(
%\underbrace{D_{u_1}^0(\gamma_1 f)(P_1),\dots,D_{u_1}^{n_1-1}(\gamma_1 f)(P_1)}_{\text{block }P_1}\ ;\
%\cdots\ ;\
%\underbrace{D_{u_s}^0(\gamma_s f)(P_s),\dots,D_{u_s}^{n_s-1}(\gamma_s f)(P_s)}_{\text{block }P_s}
%\Bigr)\in\F_q^n.
%\]
%
%\paragraph{Generator matrix (block form).}
%Let $\mathcal B_k=\{b_1,\dots,b_k\}$ be the basis above. 
As in the genus $0$ case, the generator matrix decomposes into blocks
\[
G=\bigl[G^{(1)}\ \big|\ G^{(2)}\ \big|\ \cdots\ \big|\ G^{(s)}\bigr],
\qquad G^{(i)}\in\F_q^{k\times n_i}.
\]
If $p_i=e$, the block $G^{(i)}$ is computed as in \S \ref{sec:on point genus one}
If $p_i=(\alpha_i,\beta_i)\neq e$ (with $\beta_i\neq 0$), the block $G^{(i)}$ is computed as in \S \ref{sec: two points genus one}
As in the projective line case, the full generator matrix is obtained by
concatenating the blocks associated to the points in the support of $D$.

\section{Differential Goppa codes II. Duality}

A key result in the classical theory of geometric Goppa codes is that the dual of a Goppa code is again a Goppa code (see \cite{Stichtenoth, TsfasmanBasic}). This important result rests on Serre duality, the Residue Theorem  and on the fact that the residue of meromorphic differentials establishes a canonical trivialization 
$\omega_{X}(p)|_p\simeq \mathbb{F}_q$ (see \cite[\S 3.1.3, pp. 289]{TsfasmanBasic}).
We will show in this section an analogous result, i.e., that the dual of a differential Goppa code is a differential Goppa code as well. To this end, we first need to show that there is a canonical trivialization of $\omega_X (np)|_{np}$ once a uniformizer of $p$ has been fixed.

Throughout this section $X$ is a smooth projective curve of genus $g$ over $\mathbb{F}_q$ and $\omega_X$ its canonical sheaf.

\subsection{The multiplicity bilinear form and the dual code}

Before stating the duality theorem for differential Goppa codes, we need a natural bilinear form to define orthogonality in the context of multiplicity Goppa codes.

Consider $\F_q^M$ and fix a partition of $M$, $H=\{n_1,\dots,n_s\}$ with $\sum_{i=1}^s n_i=M$.
We write vectors in block coordinates
\[
c=(c_{1,0},\dots,c_{1,n_1-1}\,;\ \dots\ ;\,c_{s,0},\dots,c_{s,n_s-1})\in \F_q^M,
\]
and similarly for $c'$. There is a standard bilinear form defined in this context (see \cite{Sharma2014, DoughertyRT}).
\begin{definition}
The {$H$-bilinear form} on $\F_q^M$ is the map
\[
\langle\cdot,\cdot\rangle_H: \F_q^M \times \F_q^M \longrightarrow \F_q
\]
defined by
\[
\langle c,c'\rangle_H \;:=\; \sum_{i=1}^s \ \sum_{j=0}^{n_i-1} c_{i,j}\, c'_{i,n_i-1-j}.
\]
\end{definition}
%\begin{remark}
%The bilinear form $\langle\cdot,\cdot\rangle_H$ can be written in matrix form.  
%For each block $i$, let $J_{n_i}$ denote the $n_i\times n_i$ antidiagonal matrix with ones on the antidiagonal and zeros elsewhere, 
%$$
%J_{n_i}
%=
%\begin{pmatrix}
%0      & 0      & \cdots & 0      & 1 \\
%0      & 0      & \cdots & 1      & 0 \\
%\vdots & \vdots & \ddots & \vdots & \vdots \\
%0      & 1      & \cdots & 0      & 0 \\
%1      & 0      & \cdots & 0      & 0
%\end{pmatrix}.
%$$
%and define
%$J_H := \operatorname{diag}(J_{n_1},\dots,J_{n_s}) \in \textrm{Mat}_{M}(\F_q)$.
%Then $\langle x,y\rangle_H = x^{\mathsf T} J_H y
%\quad \text{for all } x,y\in V$.
%\end{remark}
\begin{lemma}
The bilinear form $\langle\cdot,\cdot\rangle_H$ is non-degenerate.
\end{lemma}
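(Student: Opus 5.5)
The plan is to write the form in matrix terms and check that its Gram matrix is invertible. For each block $i$, let $J_{n_i}$ be the $n_i\times n_i$ anti-diagonal matrix with ones on the anti-diagonal, and set $J_H:=\diag(J_{n_1},\dots,J_{n_s})\in\Mat_M(\F_q)$. Directly from the definition,
\[
\langle c,c'\rangle_H=\sum_{i=1}^s\sum_{j=0}^{n_i-1}c_{i,j}\,c'_{i,n_i-1-j}=c^{\mathsf T}J_H\,c',
\]
because the $(j,\,n_i-1-j)$ entries are exactly the nonzero entries of $J_{n_i}$.

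A bilinear form on a finite-dimensional space is non-degenerate if and only if its Gram matrix is invertible. Here each $J_{n_i}$ is a permutation matrix with $J_{n_i}^2=I_{n_i}$, so $J_H^2=I_M$ and $J_H$ is invertible. Hence both radicals vanish: $c^{\mathsf T}J_H=0$ forces $c=0$, and $J_Hc'=0$ forces $c'=0$.

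For a proof without matrices, I would argue directly. Suppose $c\neq 0$, and pick a block index $i$ and position $j$ with $c_{i,j}\neq0$. Let $c'$ be the standard basis vector $e_{i,\,n_i-1-j}$. Then $\langle c,c'\rangle_H=c_{i,j}\neq0$. The same argument applies in the second variable, since the form is symmetric: the substitution $j\mapsto n_i-1-j$ is an involution.

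There is no real obstacle. The only point needing care is the index bookkeeping, namely checking that the pairing $j\leftrightarrow n_i-1-j$ stays inside block $i$ and is a bijection of $\{0,\dots,n_i-1\}$.
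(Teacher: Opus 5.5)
Your proof is correct, and your direct argument (testing $c$ against the basis vector $e_{i,n_i-1-j}$ to isolate $c_{i,j}$) is exactly the paper's proof, which checks only the left radical; that suffices in finite dimension. Your Gram-matrix formulation, using the block anti-diagonal permutation matrix $J_H$ with $J_H^2=I_M$, is a compact repackaging of the same observation that also makes the vanishing of both radicals immediate.
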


\begin{proof}
Clearly $\langle\cdot,\cdot\rangle_H$ is $\F_q$-bilinear. To prove non-degeneracy, fix $c\in V$ such that $\langle c,c'\rangle_H=0  \text{ for all } c'\in V$.
Fix a block index $i\in\{1,\dots,s\}$ and a coordinate index $j\in\{0,\dots,n_i-1\}$. 
Let $c'\in V$ be the vector with all coordinates equal to $0$ except $c'_{i,n_i-1-j}=1$.
Then, by the definition of $\langle\cdot,\cdot\rangle_H$, every term in the double sum is equal to zero except the one with indices $(i,j)$, and we obtain
\[
0=\langle c,c'\rangle_H = c_{i,j}.
\]
Since $i$ and $j$ were arbitrary, all coordinates of $c$ are zero, hence $c=0$. Therefore the left radical  of $\langle\cdot,\cdot\rangle_H$ is zero, so $\langle\cdot,\cdot\rangle_H$ is non-degenerate.
\end{proof}

\begin{definition}
Let $C\subset \F_q^{M}$ be a linear subspace. The $H$-dual of $C$ is defined by
$$
C^{\bot_H}:=\{c'\in\F_q^M | \, \langle c,c'\rangle_H =0 \  \forall c\in C \}
$$
\end{definition}
%\begin{remark}
%Let $G$ be a generator matrix of $C$. Then, using the matrix representation of
%$\langle\cdot,\cdot\rangle_H$, we obtain
%\[
%C^{\perp_H}
%=
%\{\, y\in V \mid G J_H y = 0 \,\}
%=
%\ker(GJ_H).
%\]
%Equivalently, if $C^\perp$ denotes the dual code with respect to the
%standard inner product, then $C^{\perp_H} = J_H \, C^\perp$.
%\end{remark}

\begin{lemma}
Let $C\subset V$ be a linear code. Then it holds
\begin{enumerate}
\item $\dim C^{\perp_H} = M - \dim C$.
\item $(C^{\perp_H})^{\perp_H} = C$.
%\item $C=C^{\perp_H}$ iff $\dim C = \frac{M}{2}$ and  $G J_H G^{\mathsf T} = 0$.
\end{enumerate}
\end{lemma}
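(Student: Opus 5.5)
The plan is to reduce both statements to standard linear algebra for a non-degenerate bilinear form on a finite-dimensional space, using the non-degeneracy of $\langle\cdot,\cdot\rangle_H$ proved in the previous lemma. Write $V=\F_q^M$. The form induces a linear map
\[
\beta: V\longrightarrow V^{*},\qquad c'\mapsto \langle\cdot,c'\rangle_H .
\]
Concretely, $\langle c,c'\rangle_H=c^{\mathsf T}J_H c'$, where $J_H=\operatorname{diag}(J_{n_1},\dots,J_{n_s})$ is the block anti-diagonal permutation matrix. Since $J_H$ is invertible (in fact $J_H^2=\mathrm{Id}$), $\beta$ is injective, and hence an isomorphism because $\dim V=\dim V^*$. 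The same holds for the map in the other slot, $c\mapsto\langle c,\cdot\rangle_H$. Note also that the form is symmetric, since $J_H^{\mathsf T}=J_H$; this means left and right orthogonals coincide, which we use in the second part.

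For (1), consider the restriction map $r:V^*\to C^*$, which is surjective. Then $C^{\perp_H}$ is exactly the kernel of $r\circ\beta:V\to C^*$, and this composite is surjective because $\beta$ is an isomorphism. Rank–nullity then gives
\[
\dim C^{\perp_H}=M-\dim C .
\]
Equivalently, if $G$ is a $k\times M$ generator matrix of $C$ of rank $k$, then $C^{\perp_H}=\Ker(GJ_H)$, and $GJ_H$ still has rank $k$.

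For (2), first show the inclusion $C\subseteq (C^{\perp_H})^{\perp_H}$. Given $c\in C$ and $c'\in C^{\perp_H}$, we have $\langle c',c\rangle_H=\langle c,c'\rangle_H=0$ by symmetry of the form. Next apply (1) twice:
\[
\dim (C^{\perp_H})^{\perp_H}=M-(M-\dim C)=\dim C .
\]
Equality of the subspaces follows.

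The only point needing care is the symmetry of $\langle\cdot,\cdot\rangle_H$. The reindexing $j\mapsto n_i-1-j$ within each block shows that the defining sum is symmetric, and without this the inclusion in (2) would have to be phrased with left and right orthogonals. Everything else is routine.
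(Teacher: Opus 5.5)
Your proof is correct and follows the same route as the paper, whose proof is the single line that both statements follow from the non-degeneracy of $\langle\cdot,\cdot\rangle_H$. You supply the standard rank--nullity and double-orthogonal details, and you make explicit the symmetry of the form, which the paper's argument for (2) uses implicitly.
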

\begin{proof}
Statements (1) and (2) follow from the non-degeneracy of $\langle\cdot,\cdot\rangle_H$.
%Statement (3) is immediate from the definition.
\end{proof}

\subsection{Dual Taylor trivializations}

The dual code of a classical Goppa code is known to be Goppa as well. This fundamental result rests on Serre Duality Theorem and the natural correspondence between trivializations of $L|_p$ and trivializations of $(L^{-1}\otimes\omega_X(p))|_p$. In this section, we generalize this correspondence to the case of multiple points.

\begin{lemma}\label{lm:map theta}
Let $p \in X$ be a rational point and $t$ a uniformizer at $p$. Let $\operatorname{Triv}(L_p)$ denote the set of $O_p$-linear isomorphisms $\gamma: L_p \to O_p$. 
\begin{enumerate}
\item $\operatorname{Triv}(L_p)$ is a torsor under the action of the group of units $O_p^\times$. 
\item For any $\gamma\in \operatorname{Triv}(L_p)$,  the element 
\begin{equation}\label{eq:xigen}
\xi_{gen} := \gamma \otimes (t^{-n} dt)
\end{equation}
is a generator of the $O_p$-module $N_p = L_p^{-1} \otimes \omega_X(np)_p$
\item The map 
\[
\Theta: \operatorname{Triv}(L_p) \longrightarrow \operatorname{Triv}(N_p),
\]
where $\Theta(\gamma)$ is the unique trivialization sending the local generator $\gamma\otimes t^{-n} dt$ to $1$, is equivariant with respect to the inversion automorphism of $O_p^\times$:
$$
\Theta(u\gamma)=u^{-1}\Theta(\gamma).
$$
In particular, it is a bijection.
\end{enumerate}
\end{lemma}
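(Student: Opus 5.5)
The statement is essentially linear algebra over the discrete valuation ring $O_p$, and I will prove the three items in order, working throughout with the fact that $L_p$, $\omega_{X,p}$ and hence $N_p=L_p^{-1}\otimes\omega_X(np)_p$ are free $O_p$-modules of rank one. Here $L_p^{-1}=\operatorname{Hom}_{O_p}(L_p,O_p)$, so a trivialization $\gamma:L_p\to O_p$ is itself an element of $L_p^{-1}$; this is the sense in which $\gamma\otimes t^{-n}dt$ makes sense.

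For (1), I first check that $O_p^\times$ acts on $\operatorname{Triv}(L_p)$ by $(u,\gamma)\mapsto u\gamma$. This is clear, since multiplication by a unit is an $O_p$-automorphism of $O_p$. Next, for two trivializations $\gamma,\gamma'$, the composite $\gamma'\circ\gamma^{-1}$ is an $O_p$-linear automorphism of $O_p$. It is therefore multiplication by $f=\gamma'\gamma^{-1}(1)\in O_p^\times$, and this gives $\gamma'=f\gamma$ with $f$ unique. Uniqueness holds because $f\gamma=\gamma$ forces $f=1$, as $\gamma$ is surjective. Nonemptiness follows because $L_p$ is free of rank one. This also justifies the unit $f$ used in Lemma \ref{lm:change of trivialization at a point}.

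For (2), I note that $\gamma$ generates $L_p^{-1}$, since an isomorphism $L_p\simeq O_p$ is a basis of the dual. Since $t$ is a uniformizer, $dt$ generates $\omega_{X,p}$, and so $t^{-n}dt$ generates $\omega_X(np)_p=t^{-n}\omega_{X,p}$ by \eqref{eq:sec differentials}, because $\operatorname{ord}_p(t)=1$. A tensor product of generators of two free rank-one modules generates their tensor product, which gives (2).

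For (3), I define $\Theta(\gamma)$ as the unique $O_p$-isomorphism $N_p\to O_p$ with $\gamma\otimes t^{-n}dt\mapsto 1$; it exists and is unique by (2). For $u\in O_p^\times$ we have $(u\gamma)\otimes t^{-n}dt=u\,(\gamma\otimes t^{-n}dt)$. Hence $u^{-1}\Theta(\gamma)$ sends $(u\gamma)\otimes t^{-n}dt$ to $u^{-1}u=1$, and uniqueness gives $\Theta(u\gamma)=u^{-1}\Theta(\gamma)$. Bijectivity then follows from the torsor structure in (1), applied to both $L_p$ and $N_p$. Fix $\gamma_0$. Every element of $\operatorname{Triv}(L_p)$ is uniquely $u\gamma_0$, and every element of $\operatorname{Triv}(N_p)$ is uniquely $v\Theta(\gamma_0)$. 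Since $\Theta(u\gamma_0)=u^{-1}\Theta(\gamma_0)$ and $u\mapsto u^{-1}$ is a bijection of $O_p^\times$, $\Theta$ is a bijection.

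None of the steps is a genuine obstacle. The only point needing care is conventions: $L_p^{-1}$ must be read as the dual module so that $\gamma$ lies in it, and $\omega_X(np)_p$ must be identified with $t^{-n}\omega_{X,p}$.
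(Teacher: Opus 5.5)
Your proof is correct and follows the paper's argument. You use the dual basis $\gamma$ tensored with the basis $t^{-n}dt$ of $\omega_X(np)_p$ for (2), and the identity $(u\gamma)\otimes t^{-n}dt=u\,\xi_{gen}$ plus uniqueness for (3); you also write out what the paper leaves as ``well-known'' and ``equivariant maps of torsors are bijections'', namely the torsor property in (1) and the explicit bijectivity check.
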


\begin{proof}
1) This is a well-known fact.

2) Since $L_p$ is a locally free $O_p$-module of rank $1$, any isomorphism $\gamma$ is a basis for $L_p^{-1}$. Similarly, $t^{-n} dt$ is a basis for $\omega_X(np)_p$. The tensor product of two bases of rank-$1$ modules is a basis for the tensor product module $N_p$, hence $\xi_{gen}$ is a generator.

3) From 2), we know that  both $\operatorname{Triv}(L_p)$ and $\operatorname{Triv}(N_p)$ are torsors under the action of the group of units $O_p^\times$. 
Now, if we replace $\gamma$ with $\gamma':=u \gamma$ for some $u \in O_p^\times$, the new generator becomes:
\[
\xi'_{gen} = (u \gamma) \otimes (t^{-n} dt) = u (\gamma \otimes t^{-n} dt) = u \xi_{gen}.
\]
Since $\Theta(\gamma')(\xi'_{gen}) = 1$, we must have $\Theta(\gamma') = u^{-1} \Theta(\gamma)$. The map $\Theta$ is thus equivariant with respect to the inversion automorphism of $O_p^\times$. Recall that any equivariant morphism between torsors is necessary a bijection.
\end{proof}

\begin{lemma}
Let $p \in X$ be a rational point and $t$ a uniformizer at $p$. Consider the set  of Taylor trivializations of $J_p^n(L)$ associated to $t$, $\Iso^t_{\mathrm{Tay}}(J^n_p(L),\mathbb{F}_{q}^n)$.
The duality map $\Theta: \operatorname{Triv}(L_p) \to \operatorname{Triv}(N_p)$ descends to a well-defined map between the spaces of Taylor trivializations:
\[
\overline{\Theta}: \Iso^t_{\mathrm{Tay}}(J^n_p(L),\mathbb{F}_{q}^n) \longrightarrow \Iso^t_{\mathrm{Tay}}(J^n_p(N),\mathbb{F}_{q}^n).
\]
Furthermore, the following diagram commutes: 
\[
\begin{CD}
\operatorname{Triv}(L_p) @>\Theta>> \operatorname{Triv}(N_p) \\
@VV \pi_L V @VV \pi_N V \\
 \Iso^t_{\mathrm{Tay}}(J^n_p(L),\mathbb{F}_{q}^n)  @>\overline{\Theta}>> \Iso^t_{\mathrm{Tay}}(J^n_p(N),\mathbb{F}_{q}^n)
\end{CD}
\]
where $\pi_L$ and $\pi_N$ are the natural reduction maps modulo $\mathfrak{m}_p^n$.
\end{lemma}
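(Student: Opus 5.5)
We define $\overline{\Theta}$ on representatives and then check that it is independent of the choice. The set $\Iso^t_{\mathrm{Tay}}(J^n_p(L),\mathbb{F}_q^n)$ consists of the isomorphisms $u_{t,\gamma}=u_t\circ\overline{\gamma}$ with $t$ fixed and $\gamma\in\operatorname{Triv}(L_p)$, so $\pi_L(\gamma):=u_{t,\gamma}$ is surjective by definition, and similarly for $\pi_N$. We therefore set
\[
\overline{\Theta}(u_{t,\gamma}):=u_{t,\Theta(\gamma)}=\pi_N(\Theta(\gamma)).
\]
Once this is well defined, the diagram commutes by construction.

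To show well-definedness, we first describe the fibres of $\pi_L$. Suppose $u_{t,\gamma}=u_{t,\gamma'}$. Write $\gamma'=f\gamma$ with $f\in O_p^\times$, using part 1) of Lemma~\ref{lm:map theta}. Then $\overline{\gamma'}=\times\overline{f}\circ\overline{\gamma}$ on $J^{n}_p$, and since $u_t$ and $\overline{\gamma}$ are isomorphisms, the equality $u_t\circ\overline{\gamma'}=u_t\circ\overline{\gamma}$ says that multiplication by $\overline f$ is the identity on the truncated jet ring. Equivalently, by Lemma~\ref{lm:change of trivialization at a point}, the matrix $S_p$ is the identity, so $D^j_t(f)=\delta_{j0}$ for $j$ below the truncation order. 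Hence $f\equiv 1$ modulo the relevant power $\mathfrak m_p^{n}$ (the fibres of $\pi_L$ are exactly the cosets of $1+\mathfrak m_p^{n}$). By part 3) of Lemma~\ref{lm:map theta}, $\Theta(\gamma')=f^{-1}\Theta(\gamma)$. Since $f^{-1}\equiv 1$ modulo the same ideal, Lemma~\ref{lm:change of trivialization at a point} applied to $N$ with the matrix $S_p(f^{-1})=S_p(f)^{-1}=\mathrm{Id}$ gives $u_{t,\Theta(\gamma')}=u_{t,\Theta(\gamma)}$. Thus $\overline{\Theta}$ is well defined.

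The statement does not ask for bijectivity, but it follows from the same argument. The set $1+\mathfrak m_p^{n}$ is a subgroup of $O_p^\times$ stable under inversion, so $\Theta$ maps fibres of $\pi_L$ bijectively onto fibres of $\pi_N$. Consequently $\overline{\Theta}$ is equivariant for $\overline f\mapsto\overline f^{-1}$ on $(J_p)^\times$, and it is a bijection of torsors.

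The main obstacle is the bookkeeping of indices. The paper uses $J^n_p=O_p/\mathfrak m_p^{n+1}$ in some places and $n$-dimensional bases $\{1,\dots,t^{n-1}\}$ in others, and also uses $n$ as the twist in $\omega_X(np)$. I would fix the convention that the target is $\mathbb{F}_q^n$, with kernel of reduction $\mathfrak m_p^{n}$, and run the argument with that single ideal. The only genuine input is that inversion preserves the congruence subgroup $1+\mathfrak m_p^n$, which holds because it is a group.
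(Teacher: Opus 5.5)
Your proposal is correct and follows the paper's argument. Two trivializations with the same Taylor reduction differ by a unit $f\equiv 1 \pmod{\mathfrak m_p^n}$. The equivariance $\Theta(f\gamma)=f^{-1}\Theta(\gamma)$ from Lemma~\ref{lm:map theta} carries this over to $N_p$, and $f^{-1}\equiv 1$ as well, so $\pi_N\circ\Theta$ is constant on the fibres of $\pi_L$. Beyond the paper, you justify why equal reductions force $f\equiv 1 \pmod{\mathfrak m_p^n}$, which the paper only asserts, and you observe that $\overline{\Theta}$ is bijective; both additions are sound.
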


\begin{proof}
Let $\gamma_1, \gamma_2 \in \operatorname{Triv}(L_p)$ be two trivializations defining the same Taylor trivialization modulo $\mathfrak{m}_p^n$. This implies $\gamma_2 = u \gamma_1$ for some unit $u \in O_p^\times$ such that $u \equiv 1 \pmod{\mathfrak{m}_p^n}$.  By Lemma \ref{lm:map theta} 3), we have $\Theta(\gamma_2) = \Theta(u \gamma_1) = u^{-1} \Theta(\gamma_1)$. Since $u$ is of the form $1 + m$ for some $m \in \mathfrak{m}_p^n$, its inverse in the local ring $O_p$ also satisfies $u^{-1} \equiv 1 \pmod{\mathfrak{m}_p^n}$.  Consequently, $\pi_N(\Theta(\gamma_2)) = \pi_N(\Theta(\gamma_1))$, which proves that $\overline{\Theta}$ is well-defined. The commutativity of the diagram follows directly from the construction.
\end{proof}

\begin{lemma}\label{prop: charac theta(gamma)}
Let $p \in X$ be a rational point and $t$ a uniformizer at $p$
Let $\gamma \in \operatorname{Triv}(L_p)$ be a local trivialization and let $\Theta(\gamma): N_p \xrightarrow{\sim} O_p$ be the dual trivialization.
For any section $\xi \in N_p$, the coordinate vector $(g_0, \dots, g_{n-1}) \in \mathbb{F}_q^n$ of $\overline{\xi}:=\xi \mod{\mathfrak{m}_p^n\in J_p^n(N)}$ under the Taylor trivialization $\overline{\Theta(\gamma)}$ is given by
\[
g_k = \operatorname{Res}_p(t^{n-1-k} \eta_\xi), \quad \text{for } k=0, \dots, n-1,
\]
where $\eta_\xi \in \omega_X(np)_p$ is the differential satisfying the relation $\xi = \gamma \otimes \eta_\xi$.
\end{lemma}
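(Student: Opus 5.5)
We reduce everything to a computation with Laurent series in $t$. First, note how $\Theta(\gamma)$ acts: every $\xi\in N_p$ can be written uniquely as $\xi=\gamma\otimes\eta_\xi$ with $\eta_\xi\in\omega_X(np)_p$, because $\gamma$ generates $L_p^{-1}$ (Lemma~\ref{lm:map theta}, 2)). Write $\eta_\xi=h\,t^{-n}dt$ with $h\in O_p$. Then $\xi=h\,(\gamma\otimes t^{-n}dt)=h\,\xi_{gen}$. Since $\Theta(\gamma)$ is $O_p$-linear and sends $\xi_{gen}$ to $1$, we get $\Theta(\gamma)(\xi)=h$. So the Taylor trivialization $\overline{\Theta(\gamma)}$ attached to $t$ sends $\overline{\xi}$ to the coefficients of $h$ in the basis $\{1,t,\dots,t^{n-1}\}$ of $O_p/\mathfrak m_p^n$. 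In the notation of Remark~\ref{rmk:code-words} and of the proof of Lemma~\ref{lm:dimension Goppa}, this means the $k$-th coordinate is $D^k_t(h)=h_k$, where $h(t)=\sum_{m\ge0}h_mt^m$ is the expansion of $h$ in $\widehat O_p\simeq\F_q[[t]]$.

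Second, we compute the residue. Using the definition \eqref{eq:residue} with the representation $t^{n-1-k}\eta_\xi=(t^{n-1-k}h t^{-n})\,dt$, we have $g=t$ and $g'(t)=1$. The residue is therefore the coefficient of $t^{-1}$ in $\sum_m h_m t^{m-1-k}$, which is exactly $h_k$. This holds for $0\le k\le n-1$, since $t^{n-1-k}$ is then regular and the expansion is legitimate. Comparing with the first step gives $g_k=\operatorname{Res}_p(t^{n-1-k}\eta_\xi)$.

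The only point needing care is well-definedness. We must check that the coordinate vector depends only on $\overline{\xi}$, and not on $\xi$ or on the representative $\gamma$ of the Taylor trivialization. Replacing $\xi$ by $\xi+\xi'$ with $\xi'\in\mathfrak m_p^nN_p$ changes $h$ by an element of $\mathfrak m_p^n$, which does not affect $h_0,\dots,h_{n-1}$. Equivalently, it adds to $\eta_\xi$ a regular differential $\eta'$, and $t^{n-1-k}\eta'$ is then regular with zero residue. Changing $\gamma$ to $u\gamma$ with $u\equiv1\pmod{\mathfrak m_p^n}$ is handled in the same way, as in the preceding lemma. I expect no serious obstacle. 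The main thing to watch is the indexing convention: the basis should run over $t^0,\dots,t^{n-1}$ modulo $\mathfrak m_p^n$, consistent with how $J^n_p$ is used in this section, rather than modulo $\mathfrak m_p^{n+1}$.
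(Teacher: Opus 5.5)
Your proposal is correct and follows the paper's argument: write $\xi$ as $h\,\xi_{gen}$, read off the coordinates under $\overline{\Theta(\gamma)}$ as the first $n$ Taylor coefficients of $h$ in $t$, and then compute the residue of $t^{n-1-k}\eta_\xi = h\,t^{-1-k}\,dt$ as the $t^{-1}$ Laurent coefficient, which is $h_k$. The only differences are that you start from $\eta_\xi = h\,t^{-n}dt$ rather than deducing it from $\xi = g\,\xi_{gen}$, and that you add a well-definedness check, which the paper leaves to the preceding lemma.
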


\begin{proof}
By definition of $N_p$ as a rank-1 $O_p$-module, the section $\xi$ can be expressed uniquely as $\xi = g \cdot \xi_{gen}$ for some $g \in O_p$ (see \ref{eq:xigen}). 
On the other hand, $\Theta(\gamma)$ is uniquely determined by the condition $\Theta(\gamma)(\xi_{gen}) = 1$ (see Lemma \ref{lm:map theta}).
Applying the trivialization $\Theta(\gamma)$ and using its $O_p$-linearity, we get
$\Theta(\gamma)(\xi) = \Theta(\gamma)(g \cdot \xi_{gen}) = g \cdot \Theta(\gamma)(\xi_{gen}) = g$. Thus, the image of $\overline{\Theta(\gamma)}(\overline{\xi})$  is the vector $(g_0, \dots, g_{n-1})$ consisting of the first $n$  coefficients of the Taylor expansion of the function $g$ at $p$ with respect to the uniformizer $t$.  To relate these coefficients to the residues of $\eta_\xi$, we compare the two representations of $\xi$
\begin{equation}\label{eq:presenntation eta}
\gamma \otimes \eta_\xi = g \cdot (\gamma \otimes t^{-n} dt) = \gamma \otimes (g t^{-n} dt).
\end{equation}
This implies the identity of meromorphic differentials $\eta_\xi = g t^{-n} dt$. Substituting the Taylor expansion $g(t) = \sum_{j=0}^{\infty} g_j t^j$ into this expression yields
\[
\eta_\xi = \left( \sum_{j=0}^{\infty} g_j t^j \right) t^{-n} dt = \sum_{j=0}^{\infty} g_j t^{j-n} dt.
\]
On the other hand, for an arbitrary integer $m \geq 0$, we have
\[
\operatorname{Res}_p(t^m \eta_\xi) = \operatorname{Res}_p \left( \sum_{j=0}^{\infty} g_j t^{j-n+m} dt \right).
\]
By the definition of the residue, the only term that contributes is the one where the exponent of $t$ is $-1$. This happens when $j-n+m = -1$, or equivalently, $j = n-1-m$. Therefore $\Res_p(t^m \eta_\xi) = g_{n-1-m}$. Setting $k = n-1-m$, which implies $m = n-1-k$, we obtain:
\[
g_k = \operatorname{Res}_p(t^{n-1-k} \eta_\xi).
\]
\end{proof}

\subsection{Duality theorem}

Consider the geometric data $(X,L,D)$ as in \S\ref{sec:definition} and set $M=\sum n_i$. We then have the short exact sequence
$$
0\rightarrow L(-D) \rightarrow L \rightarrow L|_D\rightarrow 0
$$
Taking global sections, we get
$$
\cdots \rightarrow H^0(X,L) \rightarrow L|_D\rightarrow H^1(X,L(-D))\rightarrow \cdots
$$
Here, $L|_{D}$ is just $\oplus_{i=1}^s J_{p_i}^{n_i-1}(L)$ (see \eqref{eq:fibers of jets}).
Let $L|_D\simeq \mathbb{F}_q^{M}$ be a Taylor trivialization given by the data $(\{t_i\},\gamma_{D})$. Then, we already know that
the image of 
$$H^0(X,L) \rightarrow L|_D\simeq \mathbb{F}_q^{M}$$ 
is precisely the differential Goppa code $C(X,L,D,t_D,\gamma_{D})$ (see Remark \ref{rmk: RT}). On the other hand, we may consider the corresponding exact sequence in cohomology for the invertible sheaf $N=L^{-1}\otimes\omega_X(D)$:
$$
 \cdots\rightarrow H^0(X,L^{-1}\otimes\omega_X(D)) \rightarrow (L^{-1}\otimes\omega_X(D))|_D \rightarrow H^1(X,L^{-1}\otimes \omega_X) \rightarrow \cdots \\
$$

\begin{definition}
The dual differential Goppa code associated to the data $(X,L,D,t_D,\gamma_{D})$ is defined as the differential Goppa code 
$$
C(X,N,D,t_D,\Theta(\gamma_D)) \ \textrm{ with } N=L^{-1}\otimes \omega_X(D).
$$ 
where $\Theta(\gamma_D):=\{ \Theta(\gamma_i)\}$
We denote by $\Res_{t_D,\gamma_D}$ the linear map
$$
H^0(X,L^{-1}\otimes\omega_X(D)) \rightarrow (L^{-1}\otimes\omega_X(D))|_D\simeq \F_q^M.
$$
whose image is $C(X,N,D,t_D,\Theta(\gamma_D))$.
\end{definition}

\begin{proposition}\label{lem:hasse-residue-convolution}
For sections $s \in H^0(X,L)$ and $\xi \in H^0(X,N)$, let $\langle s, \xi \rangle \in H^0(X, \omega_X(D))$ be the section induced by the natural contraction $L \otimes L^{-1} \to O_X$. Fix an index $i$, a uniformizer $t_i$ at $p_i$, and a local trivialization $\gamma_i: L_{p_i} \xrightarrow{\sim} O_{p_i}$. Then:
\[
\operatorname{Res}_{p_i}(\langle s, \xi \rangle) = \sum_{j=0}^{n_i-1} D_{t_i}^{(j)}\bigl(\gamma_i(s)\bigr)(p_i) \cdot \operatorname{Res}_{p_i} \Bigl(  t_i^{n_i-1-j} \eta_i \Bigr).
\]
where $\eta \in \omega_X(np)_{p_i}$ is the differential satisfying $\xi = \gamma_i \otimes \eta_i$ in $O_{p_i}$.
\end{proposition}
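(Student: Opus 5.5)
The plan is a direct local computation in the completed local ring $\widehat{O}_{p_i}\simeq \F_q[[t_i]]$, comparing Laurent coefficients on both sides. Write $n:=n_i$, $t:=t_i$, $\gamma:=\gamma_i$, $\eta:=\eta_i$.

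\textbf{Step 1: reduce the left-hand side.} Since $\xi=\gamma\otimes\eta$ in $N_{p_i}=L_{p_i}^{-1}\otimes\omega_X(np_i)_{p_i}$, the contraction $L\otimes L^{-1}\to O_X$ localized at $p_i$ gives $\langle s,\xi\rangle_{p_i}=\gamma(s)\,\eta$, where $\gamma(s)$ is understood as in the Remark after Lemma~\ref{lm: Taylor map at divisor}. So the left-hand side is $\Res_{p_i}(\gamma(s)\eta)$. This uses only the $O_{p_i}$-linearity of $\gamma$ and of the contraction.

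\textbf{Step 2: expand both factors.} As in the proof of Lemma~\ref{lm:dimension Goppa}, write $\gamma(s)=\sum_{j\ge0}a_jt^j$ with $a_j=D^{(j)}_t(\gamma(s))(p_i)$. As in the proof of Lemma~\ref{prop: charac theta(gamma)}, write $\eta=g\,t^{-n}dt$ with $g=\sum_{k\ge0}g_kt^k\in O_{p_i}$. By the definition of the residue \eqref{eq:residue}, only the coefficient of $t^{-1}$ contributes, so
\[
\Res_{p_i}(\gamma(s)\eta)=\sum_{j+k=n-1}a_jg_k=\sum_{j=0}^{n-1}a_j\,g_{n-1-j}.
\]
The terms with $j\ge n$ drop out because they only contribute nonnegative powers of $t$; this is exactly why the sum is truncated at $n_i-1$.

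\textbf{Step 3: identify the residue factor on the right.} The same one-line computation gives
\[
\Res_{p_i}(t^m\eta)=g_{n-1-m},
\]
which is Lemma~\ref{prop: charac theta(gamma)}. For the identity to hold, each $a_j$ must be paired with $g_{n-1-j}$, which is $\Res_{p_i}(t^{j}\eta)$. Equivalently, in the coordinates $(g_0,\dots,g_{n-1})$ of $\overline{\xi}$ under $\overline{\Theta(\gamma)}$, $a_j$ must be paired with the $(n-1-j)$-th coordinate, as the $H$-bilinear form does.

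\textbf{Main obstacle: this indexing step.} Substituting $m=n-1-j$ literally gives $\Res_{p_i}(t^{n-1-j}\eta)=g_j$. With that reading the right-hand side would be $\sum_j a_jg_j$, which differs from $\sum_j a_jg_{n-1-j}$ in general. A test with $n=2$, $\gamma(s)=t$ and $\eta=t^{-2}dt$ gives $1$ on the left and $0$ for that reading.

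So I would first settle the convention: read the residue factor as the $(n_i-1-j)$-th Taylor coordinate of $\overline{\xi}$ under $\Theta(\gamma_i)$, which by Lemma~\ref{prop: charac theta(gamma)} is $\Res_{p_i}(t_i^{\,j}\eta_i)$. With that reading, Steps 1 and 2 close the argument. If instead the literal exponent $n_i-1-j$ is intended, the check above shows the formula must be corrected to exponent $j$. This is the form needed to feed the $H$-bilinear form into the duality theorem via the residue theorem \eqref{eq: residue theorem}.
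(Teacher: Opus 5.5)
Your argument is correct and follows the paper's own route. The paper likewise writes $\gamma_i(s)=f_i$ and $\xi=g_i\,(\gamma_i\otimes t_i^{-n_i}dt_i)$, so that $\langle s,\xi\rangle=f_ig_i\,t_i^{-n_i}dt_i$. It then reads off $\Res_{p_i}\langle s,\xi\rangle=\sum_{j=0}^{n_i-1}f_{i,j}\,g_{i,n_i-1-j}$ from the Cauchy product.

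Your indexing objection is justified: the formula is false as printed, even for global sections. Take $X=\mathbb{P}^1$ with $t$ the affine coordinate and $p_0$ the point $t=0$, and set $L=O_X$, $D=2p_0$, $\gamma=\mathrm{id}$. The section $1\in H^0(X,O_X)$ and $\xi=dt/t^2\in H^0(X,\omega_X(2p_0))$ then give $0$ on the left and $1$ on the right.

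The paper's proof slips at its last step. There $\Res_{p_i}(g_i\,dt_i/t_i^{n_i-j})=g_{i,n_i-1-j}$ is rewritten as $\Res_{p_i}(t_i^{n_i-1-j}\eta_i)$, whereas it equals $\Res_{p_i}(t_i^{j}\eta_i)$.

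The exponent-$j$ version you propose agrees with Lemma~\ref{prop: charac theta(gamma)}. It is also exactly what the proof of Theorem~\ref{th:duality} uses, since that proof writes $c'_{i,n_i-1-j}=\Res_{p_i}(t_i^{j}\eta_{\xi,i})$. So the duality theorem is unaffected by the correction.
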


\begin{proof}
Given $\gamma_i$ and $t_i$, we have $\Theta(\gamma_i)$. These trivializations determine locas basis of $L_{p_i}$ and $N_{p_i}$. Concretely, $e_i=\gamma_i^{-1}(1)$ is a basis of $L_{p_i}$ and $\gamma_i\otimes t^{-n}dt$ is a basis of $N_{p_i}$. Then, we may express $s$ and $\xi$ locally at $p_i$  as:
$s = f_i e_i$ and $\quad \xi = g_i \left( \gamma_i \otimes \frac{dt_i}{t_i^{n_i}} \right)$,
where $f_i, g_i \in O_{p_i}$. The contraction $\langle s, \xi \rangle$ in $\omega_X(D)_{p_i}$ is then given by
\[
\langle s, \xi \rangle = f_i g_i \frac{dt_i}{t_i^{n_i}}.
\]
The residue $\operatorname{Res}_{p_i}(\langle s, \xi \rangle)$ corresponds to the coefficient of $t_i^{-1}$ in the Laurent expansion of $f_i g_i /t_i^{n_i}$, which is equivalent to the $(n_i-1)$-th coefficient in the Taylor expansion of $f_i(t_i)g_i(t_i)$. Let $f_i(t_i) = \sum_{a=0}^\infty f_{i,a} t_i^a$ and $g_i(t_i) = \sum_{b=0}^\infty g_{i,b} t_i^b$ be the respective expansions. By the Cauchy product, we have:
\begin{equation}\label{eq:convolution}
\operatorname{Res}_{p_i}(\langle s, \xi \rangle) = \sum_{j=0}^{n_i-1} f_{i,j} g_{i,n_i-1-j}.
\end{equation}
By definition, the coefficients of the Taylor expansion of $f_i$ are given by the Hasse derivatives
$f_{i,j} = D_{t_i}^{(j)}(f_i)(p_i) = D_{t_i}^{(j)}(\gamma_i(s))(p_i)$. From Lemma \ref{prop: charac theta(gamma)} it follows $g_{i,n_i-1-j} = \Res_{p_i} ( g_i(t_i) \frac{dt_i}{t_i^{n_i-j}} ) = \Res_{p_i} (t_i^{n_i-1-j} \eta_i )$.
Substituting these identities into \eqref{eq:convolution} completes the proof.
\end{proof}

\begin{theorem}\label{th:duality}
Let $(X, L, D)$ be as above and let $ (t_D, \gamma_{D})$ be the chosen Taylor data. Let $C = C(X, L, D, t_D, \gamma_{D})$ be the corresponding differential Goppa code. The dual code of $C$ with respect to the $H$-bilinear form on $\mathbb{F}_q^M$ is precisely the dual differential Goppa code:
\[
C^{\perp_H} = C(X, N, D, t_D, \Theta(\gamma_D)).
\]
where $\Theta(\gamma_D):=\{\Theta(\gamma_1),\hdots,\Theta(\gamma_s)\}$.
\end{theorem}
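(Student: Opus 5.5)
The plan is the classical two-step argument: show that $C(X,N,D,t_D,\Theta(\gamma_D))\subseteq C^{\perp_H}$ using the residue theorem, and then prove equality by comparing dimensions via Serre duality.

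\textbf{Orthogonality.} Fix $s\in H^0(X,L)$ and $\xi\in H^0(X,N)$. The contraction $\langle s,\xi\rangle$ is a global section of $\omega_X(D)$, so it is a meromorphic differential whose poles lie in $\operatorname{supp}(D)$. By the residue theorem \eqref{eq: residue theorem},
$$\sum_{i=1}^s \Res_{p_i}(\langle s,\xi\rangle)=0.$$
Proposition \ref{lem:hasse-residue-convolution} rewrites each local residue as
$$\Res_{p_i}(\langle s,\xi\rangle)=\sum_j D^{(j)}_{t_i}(\gamma_i(s))(p_i)\,\Res_{p_i}(t_i^{n_i-1-j}\eta_i).$$
Remark \ref{rmk:code-words} identifies $c_{i,j}=D^{(j)}_{t_i}(\gamma_i(s))(p_i)$ as the coordinates of $ev_{t_D,\gamma_D}(s)$. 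Lemma \ref{prop: charac theta(gamma)}, applied with $n=n_i$ and $k=n_i-1-j$, identifies $\Res_{p_i}(t_i^{n_i-1-j}\eta_i)=c'_{i,n_i-1-j}$ as the coordinates of $\Res_{t_D,\gamma_D}(\xi)$. The sum of residues is therefore exactly $\langle ev(s),\Res(\xi)\rangle_H$, and it vanishes. One detail needs care here: Lemma \ref{prop: charac theta(gamma)} is stated for a germ in $N_p$ with $n$ the local multiplicity, so I will check that taking $n=n_i$ at each $p_i$ matches the trivialization $\Theta(\gamma_i)$ used in defining the dual code.

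\textbf{Dimension count.} Given the inclusion, it suffices to show $\dim C+\dim C(X,N,D,t_D,\Theta(\gamma_D))=M$. Lemma \ref{lm:dimension Goppa} gives
$$\dim C=h^0(L)-h^0(L(-D)),\qquad \dim C(X,N,D,\dots)=h^0(N)-h^0(N(-D)),$$
with $N(-D)=L^{-1}\otimes\omega_X$. By Serre duality, $h^0(L^{-1}\otimes\omega_X)=h^1(L)$ and $h^0(N)=h^1(L(-D))$. Riemann--Roch applied to $L$ and to $L(-D)$ then gives
$$\bigl(h^0(L)-h^1(L)\bigr)-\bigl(h^0(L(-D))-h^1(L(-D))\bigr)=\deg L-\deg L(-D)=M.$$
This is exactly the required sum, so $C(X,N,D,t_D,\Theta(\gamma_D))\subseteq C^{\perp_H}$ and $\dim C^{\perp_H}=M-\dim C$ force equality.

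An equivalent route uses the long exact sequences already displayed in the paper. Serre duality identifies $H^1(X,L(-D))$ with $H^0(X,N)^\vee$, and the connecting maps are dual to one another under the residue pairing. That formulation makes the exactness picture transparent.

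I expect the main obstacle to be bookkeeping rather than any deep step. The index reversal $j\leftrightarrow n_i-1-j$ in $\langle\cdot,\cdot\rangle_H$ must match Lemma \ref{prop: charac theta(gamma)} precisely. The differential $\eta_i$ must be defined at each $p_i$ with the correct pole order $n_i$, so that $\xi=\gamma_i\otimes\eta_i$ makes sense. Finally, the global contraction $\langle s,\xi\rangle$ must agree locally with $\gamma_i(s)\,\eta_i$. This last point follows from the fact that contracting $\gamma_i^{-1}(1)$ against $\gamma_i$ gives $1$.
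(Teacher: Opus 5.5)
Your strategy is the paper's: orthogonality from the residue theorem applied to $\langle s,\xi\rangle\in H^0(X,\omega_X(D))$, then equality by a dimension count. Your count via Serre duality and Riemann--Roch, $\dim C+\dim C(X,N,D,t_D,\Theta(\gamma_D))=\chi(L)-\chi(L(-D))=M$, is correct. It is the right form of the paper's last display, which has a sign slip and should read $M-h^0(L)+h^0(L(-D))$.

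The orthogonality step, however, contains an error at exactly the index reversal you flagged as delicate. Lemma \ref{prop: charac theta(gamma)} gives $c'_{i,k}=\Res_{p_i}(t_i^{\,n_i-1-k}\eta_i)$. With $k=n_i-1-j$ this yields $c'_{i,n_i-1-j}=\Res_{p_i}(t_i^{\,j}\eta_i)$, while $\Res_{p_i}(t_i^{\,n_i-1-j}\eta_i)=c'_{i,j}$, so the identification you state is false.

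Your conclusion survives only because Proposition \ref{lem:hasse-residue-convolution}, as printed, carries the same reversal. Put $f_i=\gamma_i(s)$ and $\eta_i=g_i\,t_i^{-n_i}dt_i$. Then $\Res_{p_i}\langle s,\xi\rangle=\sum_j f_{i,j}g_{i,n_i-1-j}$ and $g_{i,n_i-1-j}=\Res_{p_i}(t_i^{\,j}\eta_i)$; in that proof, $g_i\,dt_i/t_i^{\,n_i-j}$ equals $t_i^{\,j}\eta_i$, not $t_i^{\,n_i-1-j}\eta_i$. The printed right-hand side instead evaluates to $\sum_j f_{i,j}g_{i,j}$, which already differs for $n_i=2$. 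So you have two false identities that cancel.

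Replace both by the correct local identity $\Res_{p_i}\langle s,\xi\rangle=\sum_j c_{i,j}\Res_{p_i}(t_i^{\,j}\eta_i)=\sum_j c_{i,j}c'_{i,n_i-1-j}$. This is what the paper's proof of the theorem actually uses, and with it your argument is complete.
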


\begin{proof}
Let $s \in H^0(X, L)$ and $\xi \in H^0(X, N)$. Let $c =ev_{t_D,\gamma_D}(s)$ and $c' = \Res_{t_D,\gamma_D}(\xi)$ be the corresponding codewords.  By the definition of the $H$-bilinear form on $\mathbb{F}_q^M$, we have:
\[
\langle c, c' \rangle_H = \sum_{i=1}^s \langle c_i, c'_i \rangle_{H, n_i}, \  \langle c_i, c'_i \rangle_{H, n_i}:=\sum_{j=0}^{n_i-1} c_{i,j}\, c'_{i,n_i-1-j}. %\langle c_i, c'_i \rangle_{H, n_i},
\]
where $c_i = (c_{i,0}, \dots, c_{i, n_i-1})$ and $c'_i = (c_{i,0}, \dots, c_{i, n_i-1})$. According to the Lemma \ref{prop: charac theta(gamma)}, the coordinates of the dual section satisfy $c'_{i, n_i-1-j} = \operatorname{Res}_{p_i}(t_i^j \eta_{\xi,i})$, where $\eta_{\xi,i}$ is the local differential representative of $\xi$ at $p_i$ such that $\xi = \gamma_i \otimes \eta_{\xi,i}$. 
On the other hand, the coefficients $c_i$ are given by the Hasse derivatives (see Remark \ref{rmk:code-words}):
\[
c_{i,j} = D_{t_i}^{(j)}(f_i)(p_i) = D_{t_i}^{(j)}(\gamma_i(s))(p_i).
\]
Substituting these into the local $H$-pairing:
\[
\langle c_i, c'_i \rangle_{H, n_i} = \sum_{j=0}^{n_i-1} c_{i,j} c'_{i, n_i-1-j} = \sum_{j=0}^{n_i-1} D_{t_i}^{(j)}(\gamma_i(s))(p_i) \operatorname{Res}_{p_i}(t_i^j \eta_{\xi,i}) %= \operatorname{Res}_{p_i} \left( \sum_{j=0}^{n_i-1} a_{i,j} t_i^j \eta_{\xi,i} \right).
\]
From Proposition \ref{lem:hasse-residue-convolution}, it follows that the last term is the residue of $\langle s, \xi \rangle$ at $p_i$. Therefore
$\langle c, c' \rangle_H = \sum_{i=1}^s \operatorname{Res}_{p_i}(\langle s, \xi \rangle)$.
By the Residue Theorem \eqref{eq: residue theorem}, the sum of residues of a global meromorphic differential over all points of the curve $X$ is zero. Since $\langle s, \xi \rangle$ is regular outside the support of $D$, we have $\sum_{i=1}^s \operatorname{Res}_{p_i}(\langle s, \xi \rangle) = 0$, proving that $C(X, N, D, t_D, \Theta(\gamma_D)) \subseteq C^{\perp_H}$.

To prove the equality, we compare the dimensions. From the long exact sequences in cohomology, Serre Duality, and Lemma~\ref{lm:dimension Goppa} we know:
\begin{equation*}
\begin{split}
\dim C(X, L, D, t_D, \gamma_{D}) &= \dim H^0(X, L) - \dim H^0(X, L(-D))\\
\dim C(X, N, D, t_D, \Theta(\gamma_D)) &= \dim H^0(X, N) - \dim H^0(X, N(-D))\\
&= \dim H^1(X, L(-D)) - \dim H^1(X, L)
\end{split}
\end{equation*}
Therefore,
\begin{equation*}
\begin{split}
\dim C(X, N, D, t_D, \Theta(\gamma_D))=&M- \dim H^0(X, L) - \dim H^0(X, L(-D)) \\
=&C^{\perp_H}.
\end{split}
\end{equation*}
which completes de proof.
\end{proof}

\section{The block and Hamming distance. Design via local parameters}\label{sec:blk-hamming-design}

For the rest of this section, we fix a smooth projective curve $X$ of genus $g$ over the finite field $\mathbb{F}_q$, $L$ an invertible sheaf and $D=n_1p_1+\cdots +n_sp_s$ an effective divisor of $X$.  
Let us consider the linear map
$$
H^0(\Theta_D) \;:=\; H^0(X,L)\rightarrow H^0(X,L|_D)=\bigoplus_{i=1}^s L_{p_i}/\fm_{p_i}^{n_i}L_{p_i}
$$
induced by the canonical surjection $L\twoheadrightarrow L|_D$ (Recall Definition \ref{def: theta map}).
For a choice of uniformizers and trivializations $(t_D,\gamma_D)$, we have an associated Taylor trivialization
$$
\overline{\gamma}:H^0(X,L|_D)\simeq \F_q^M,
$$ 
which yields the differential Goppa code (Definition \ref{def:Goppa})
\[
C(X,L,D,t_D,\gamma_D)=\im(\ev_{t_D,\gamma_D})\subseteq \F_q^M.
\]
By Lemma~\ref{lm:dimension Goppa}, the kernel of $\ev_{t_D,\gamma_D}$ is $H^0(X,L(-D))$, hence
\[
k:=\dim_{\F_q} C(X,L,D,t_D,\gamma_D)=\ell(L)-\ell(L(-D)),
\]
independently of $(t_D,\gamma_D)$. While the length and dimension of a differential Goppa code are independent on the chosen $(t_D,\gamma_D)$, this is no longer true for the minimum Hamming distance. 
Our aim in this section is to shed light on the dependence of the minimum Hamming distance on the variation of local data $(t_D,\gamma_D)$.

\subsection{Definition and fundamental properties of the block distance}\label{subsec:blk-def-props}

The space $H^0(X,L|_D)\simeq\bigoplus_{i=1}^s L_{p_i}/\fm_{p_i}^{n_i}L_{p_i}$ has a natural decomposition into $s$ summands, one for each point of $\Supp(D)$.
This leads to a canonical notion of ``block support'', independent of any choice of coordinates.
\begin{definition}\label{def:block-metric}
Let $V:=\bigoplus_{i=1}^s V_i$ be a direct sum of $\F_q$--vector spaces. For $v=(v_1,\dots,v_s)\in V$, we define the block weight of $v$ by
\[
\wt_{\mathrm{blk}}(v):=\#\{\,i:\ v_i\neq 0\,\}.
\]
We define the block distance on $V$ by
\[
d_{\mathrm{blk}}(v,w):=\wt_{\mathrm{blk}}(v-w).
\]
\end{definition}

\begin{lemma}
The function $d_{\mathrm{blk}}$ is a metric on $V$.
\end{lemma}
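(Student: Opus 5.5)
We verify the three metric axioms directly from Definition~\ref{def:block-metric}; everything reduces to elementary properties of the zero vector in each summand $V_i$.

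\emph{Non-negativity and identity of indiscernibles.} By definition $d_{\mathrm{blk}}(v,w)=\#\{i: v_i-w_i\neq 0\}$ is a non-negative integer. It vanishes if and only if $v_i-w_i=0$ for every $i$. Since $V=\bigoplus_i V_i$ is a direct sum, a vector is zero exactly when all its components are zero, so this happens if and only if $v=w$.

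\emph{Symmetry.} For each $i$ we have $v_i-w_i=-(w_i-v_i)$, and a vector is nonzero if and only if its negative is. Hence the sets $\{i: v_i\neq w_i\}$ and $\{i: w_i\neq v_i\}$ coincide, and $d_{\mathrm{blk}}(v,w)=d_{\mathrm{blk}}(w,v)$.

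\emph{Triangle inequality.} This is the only step with any content, and it follows the usual Hamming argument with coordinates replaced by blocks. For $u,v,w\in V$ we show the inclusion
\[
\{i: u_i\neq w_i\}\subseteq \{i: u_i\neq v_i\}\cup\{i: v_i\neq w_i\}.
\]
Indeed, if $u_i=v_i$ and $v_i=w_i$, then $u_i=w_i$, so any index outside the right-hand side is also outside the left-hand side. Taking cardinalities and using subadditivity of $\#$ over unions gives
\[
d_{\mathrm{blk}}(u,w)\le d_{\mathrm{blk}}(u,v)+d_{\mathrm{blk}}(v,w).
\]

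The same argument also shows that $\wt_{\mathrm{blk}}$ satisfies $\wt_{\mathrm{blk}}(v+w)\le \wt_{\mathrm{blk}}(v)+\wt_{\mathrm{blk}}(w)$, and that $d_{\mathrm{blk}}$ is translation invariant. Nothing here depends on a choice of bases of the $V_i$, which confirms that the block distance on $H^0(X,L|_D)$ is intrinsic.
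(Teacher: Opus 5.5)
Your proof is correct and follows essentially the same route as the paper. Non-negativity and symmetry are immediate, and $d_{\mathrm{blk}}(v,w)=0$ exactly when $v=w$; the triangle inequality comes from including the set of nonzero blocks of $u-w$ in the union of those of $u-v$ and $v-w$, then taking cardinalities.
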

\begin{proof}
Symmetry and positivity are clear. Moreover, $d_{\mathrm{blk}}(v,w)=0$ if and only if $v-w=0$, i.e.\ $v=w$.
For the triangle inequality, fix $u,v,w\in V$. 
Since $(u-w)_i= (u-v)_i + (v-w)_i$, the condition $(u-w)_i\neq 0$ implies $(u-v)_i\neq 0$ or $(v-w)_i\neq 0$. Therefore,
\[
\{i:\ (u-w)_i\neq 0\}\subseteq \{i:\ (u-v)_i\neq 0\}\cup \{i:\ (v-w)_i\neq 0\}.
\]
Taking cardinalities gives $d_{\mathrm{blk}}(u,w)\le d_{\mathrm{blk}}(u,v)+d_{\mathrm{blk}}(v,w)$.
\end{proof}

\begin{definition}\label{def:dblk-code}
Let $M\in\mathbb{N}$ be a natural number and $M=\sum_{i=1}^{s}n_i$ a decomposition. Consider on $\F_q^M$ the block distance associated to the decomposition $\F_q^M=\oplus_{i=1}^{s}\F_q$. The block distance of a linear code $C\subset \F_q^M$ is defined  by
\[
d_{\mathrm{blk}}(C)\;:=\;\min\bigl\{\wt_{\mathrm{blk}}(c):\ c\in C \bigr\}.
\]
\end{definition}

\begin{remark}
By Theorem~\ref{th: taylor parameters}, changes of Taylor data act blockwise by invertible transformations. Hence,
given a differential Goppa code $C(X,L,D),t_D,\gamma_D)\subset \F_q^M$,
the quantity $d_{\mathrm{blk}}(C(X,L,D),t_D,\gamma_D))$ depends only on $(X,L,D)$, i.e., it is independent of all choices $(t_D,\gamma_D)$ used to identify $H^0(X,L|_D)$ with $\F_q^M$. Therefore, it is natural to use the notation 
$$
d_{\mathrm{blk}}(X,L,D)
$$ 
instead of $d_{\mathrm{blk}}(C(X,L,D),t_D,\gamma_D))$.
\end{remark}

\begin{lemma}\label{lm:blk-vs-hamming}
For every choice $(t_D,\gamma_D)$, the Hamming distance $d_H$ of $C(X,L,D,t_D,\gamma_D)\subseteq \F_q^M$ satisfies
\[
 d_{\mathrm{blk}}(X,L,D) \le d_H\big(C(X,L,D,t_D,\gamma_D)\big) \le n\, d_{\mathrm{blk}}(X,L,D)
\]
where $n:=\max_i \{n_i\}$.
\end{lemma}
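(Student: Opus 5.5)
The plan is to compare the Hamming weight and the block weight of a single codeword, and then pass to minima over the nonzero codewords. I read the block distance of a code, as usual, as the minimum over nonzero codewords; the definition as literally written would give $0$ via $c=0$.

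\textbf{Setup.} Fix $(t_D,\gamma_D)$ and write $C=C(X,L,D,t_D,\gamma_D)$. Write any $c\in\F_q^M$ in block coordinates $c=(c_1,\dots,c_s)$ with $c_i\in\F_q^{n_i}$. This block decomposition is exactly the image under $\overline{\gamma}=\oplus_i\overline{\gamma_i}$ of the canonical decomposition $H^0(X,L|_D)=\bigoplus_i L_{p_i}/\fm_{p_i}^{n_i}L_{p_i}$. Since each $\overline{\gamma_i}$ is an isomorphism, a block $c_i$ is zero if and only if the corresponding local component in $H^0(X,L|_D)$ is zero. Hence $\wt_{\mathrm{blk}}(c)$ computed in $\F_q^M$ equals the intrinsic block weight. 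This is the content of the preceding Remark, which makes $d_{\mathrm{blk}}(C)=d_{\mathrm{blk}}(X,L,D)$ for every choice of Taylor data.

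\textbf{Pointwise inequality.} For every $c$ I will show
\[
\wt_{\mathrm{blk}}(c)\le \wt_H(c)\le \sum_{i:\,c_i\neq 0} n_i\le n\,\wt_{\mathrm{blk}}(c).
\]
For the left inequality, each nonzero block contains at least one nonzero coordinate. For the right inequalities, every nonzero coordinate lies in some nonzero block, and the $i$-th block has at most $n_i\le n$ coordinates. In particular $c\neq 0$ if and only if $\wt_{\mathrm{blk}}(c)\neq 0$.

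\textbf{Minimizing.} For the lower bound, take a nonzero $c\in C$ with $\wt_H(c)=d_H(C)$. Then
\[
d_{\mathrm{blk}}\le \wt_{\mathrm{blk}}(c)\le \wt_H(c)=d_H(C).
\]
For the upper bound, take a nonzero $c\in C$ with $\wt_{\mathrm{blk}}(c)=d_{\mathrm{blk}}$. Then
\[
d_H(C)\le \wt_H(c)\le n\,\wt_{\mathrm{blk}}(c)=n\,d_{\mathrm{blk}}.
\]
If $C=0$, both sides are conventionally infinite or undefined, and nothing needs to be proved.

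\textbf{Main obstacle.} The argument is elementary. The only delicate point is the identification of block supports under $\overline{\gamma}$, which is where the blockwise nature of the Taylor trivializations (Theorem \ref{th: taylor parameters}) is used.
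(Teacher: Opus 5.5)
Your proposal is correct and follows essentially the same argument as the paper: compare codeword by codeword, $\wt_{\mathrm{blk}}(c)\le\wt_H(c)\le n\,\wt_{\mathrm{blk}}(c)$, using that $\overline{\gamma}=\oplus_i\overline{\gamma_i}$ is a blockwise isomorphism, and then take minima over nonzero codewords. Your explicit choice of a minimizing codeword for each inequality, and your remark that $d_{\mathrm{blk}}$ must be taken over nonzero codewords, make the paper's brief ``taking minima'' step fully precise.
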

\begin{proof}
Fix $0\neq c\in C(X,L,D,t_D,\gamma_D)$ and set $\tilde c:=\overline{\gamma}^{-1}(c) \in H^0(X,L|_D)$.
If $\tilde c$ has $r$ nonzero blocks, then each such block is a nonzero vector in $\F_q^{n_i}$ under the chosen identification. Hence, it contributes at least one nonzero coordinate to $c$. Therefore $\wt_H(c)\ge r=\wt_{\mathrm{blk}}(\tilde c)$, and taking minima gives $d_H\ge d_{\mathrm{blk}}$. For the second inequality, if $\tilde c$ has $r$ nonzero blocks, then $c$ has support contained in the union of these blocks, of total size at most $\sum_{j=1}^r n_{i_j}\le r\,n$. Hence $\wt_H(c)\le n \,\wt_{\mathrm{blk}}(\tilde c)$ and taking minima we get the result.
\end{proof}

\begin{remark}
Assume  $n_1=\cdots=n_s=n$. Endow $\F_q^{sn}=(\F_q^n)^s$
with the block Rosenbloom--Tsfasman weight (see \cite{DoughertyRT, Gopinadh2021,RT, Sharma2014})
$$
\wt_{RT}\bigl((v^{(1)},\dots,v^{(s)})\bigr):=\sum_{i=1}^s \rho\bigl(v^{(i)}\bigr),
$$
where, for any $w\in\F_q^n$, we define
$$
\rho(w)=\begin{cases}
0,&w=0,\\
\max\{j:\ w_j\neq 0\},&w\neq 0.
\end{cases}
$$
Then $\rho(w)\ge 1$ for all $w\neq 0$, so $\wt_{RT}(v)\ge \wt_{\mathrm{blk}}(v)$ for all $v$. In particular,
$$
d_{RT}\big(C(X,L,D,t_D,\gamma_D)\big)\ \ge\ d_{\mathrm{blk}}(X,L,D),
$$
for every choice $(t_D,\gamma_D)$, where $d_{RT}$ denotes the distance with respect to the Rosenbloom--Tsfasman weight.
\end{remark}

\begin{remark}
Assume $n_1=\cdots=n_s=n$ and identify $\F_q^{sn}$ with the space  of matrices $\textrm{Mat}_{s\times n}(\F_q)$ by grouping the $n$ coordinates at each point into the $i$--th row.
Then, any $c\in C(X,L,D,t_D,\gamma_D)$ can be regarded as a matrix $A_c\in \textrm{Mat}_{s\times n}(\F_q)$ in this way. The rank weight is then defined as (see \cite{Gabidulin2025})
$$
\rk(c)=\textrm{rank}(A_c).
$$
By Theorem \ref{th: taylor parameters}, it follows that $\rk(c)$ does not depend on the particular choice of $(t_D,\gamma_D)$ so we will use the notation $d_{rk}(X,L,D)$ instead of $d_{rk}\big(C(X,L,D,t_D,\gamma_D)\big)$.
On the other hand, it is clear that for any matrix $A\in \textrm{Mat}_{s\times n}(\F_q)$ we have
$$
\rk(A)\le \#\{\text{nonzero rows of }A\}=\wt_{\mathrm{blk}}(A).
$$
Therefore, for every choice $(t_D,\gamma_D)$, it holds
\[
\begin{split}
d_{rk}(X,L,D) & \le\ d_{\mathrm{blk}}(X,L,D),\\
d_{rk}(X,L,D) & \le \min\{s,n\},
\end{split}
\]
where $d_{rk}$ is the  distance with respect to the rank weight.
\end{remark}

\subsection{Achievability of $d_{\mathrm{blk}}$ and an explicit construction}\label{subsec:blk-achievable}

In this subsection we show that the block distance is the smallest Hamming distance one can get by varying the Taylor trivializations of $L|_D$. Indeed, we provide a constructive procedure to find the local data $(t_D,\gamma_D)$ giving the block distance .

\begin{lemma}\label{lm:local-sparsify}
Let $p\in X$ be a rational point, $t$  a uniformizer at $p$, and $\gamma:L_p\simeq O_p$ a trivialization.
Fix $n\ge 1$ and let $s\in H^0(X,L)$ be such that $\gamma(s)(t)\neq 0 \mod{t^n}$. Set
$m:=\min\{l\in\{0,\hdots,n-1\}| \ D_t^{l}(\gamma(s))\neq 0\}$.
Then, there exists $f\in O_p^{\times}$ such that $\gamma':=f \gamma$ and  satisfies
\[
\gamma'(s)(t) = t^m \pmod{t^n}.
\]
Equivalently, $\big(D_t^0(\gamma'(s)),\dots,D_t^{n-1}(\gamma'(s))\big)=(0,\dots,0,1,0,\dots,0)$, with the $1$ in position $m$.
\end{lemma}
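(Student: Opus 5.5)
Write $h:=\gamma(s)\in O_p$ and expand it in the completion $\widehat{O}_p\simeq\F_q[[t]]$ as $h(t)=\sum_{l\ge0}a_l t^l$, where $a_l=D_t^l(h)$ by the definition of the Hasse--Schmidt derivatives used in Lemma~\ref{lm:dimension Goppa}. By the choice of $m$ we have $a_0=\dots=a_{m-1}=0$ and $a_m\neq 0$. Hence, modulo $t^n$, we can factor
\[
h\equiv t^m\,v \pmod{t^n},\qquad v:=a_m+a_{m+1}t+\dots+a_{n-1}t^{n-1-m},
\]
and $v$ is a unit in $\F_q[t]/(t^n)$ because its constant term $a_m$ is nonzero. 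The plan is to take $f$ to be a unit of $O_p$ lifting $v^{-1}$. To the precision we need, the multiplication $t^m\cdot v^{-1}\cdot v$ only has to hold modulo $t^{n-m}$, since $t^m\cdot(\text{anything in }(t^{n-m}))\in (t^n)$.

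The first concrete step is to produce $f$ inside $O_p$ rather than just in $\F_q[[t]]$. Let $w\in\F_q[t]$ be the truncation of the power series inverse of $v$ to degree $n-1$. Since $t\in O_p$, the element $f:=w(t)$, a polynomial in $t$ with coefficients in $\F_q$, lies in $O_p$. Its value at $p$ is $w(0)=a_m^{-1}\neq0$, so $f\in O_p^\times$. Then $\gamma':=f\gamma$ is again a trivialization of $L_p$, and $\gamma'(s)=f h$.

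The second step is the computation. By Lemma~\ref{lm:change of trivialization at a point} (or directly by multiplicativity of Taylor expansions in $\F_q[[t]]$), we get
\[
\gamma'(s)(t)\equiv w\,t^m v\equiv t^m (wv)\equiv t^m \pmod{t^n}.
\]
This holds because $wv\equiv1 \pmod{t^{n}}$, and a fortiori modulo $t^{n-m}$. Reading off the coefficients gives the equivalent vector statement with the $1$ in position $m$.

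The only mildly delicate point is ensuring that $f$ is a genuine element of $O_p^\times$ and not merely of the completion. Taking a polynomial in $t$ settles this, since only its class modulo $\fm_p^n$ matters. Alternatively, one can invoke the surjectivity $O_p^\times\to (O_p/\fm_p^n)^\times$.
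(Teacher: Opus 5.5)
Your argument is correct and is essentially the paper's. The paper solves a triangular Toeplitz system with pivot $a_m\neq 0$ for the first $n-m$ Taylor coefficients of $f$ and then takes any lift to $O_p^\times$; this amounts to your inversion of the unit $v$ modulo $t^{n-m}$, with your polynomial $f=w(t)$ serving as an explicit lift.

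Your direct power-series computation is in fact the cleaner route. The paper's row-vector convention $(a_0,\dots,a_{n-1})S_p(f)$ is transposed relative to the true convolution $D^k_t(fh)=\sum_{j\le k}D^{k-j}_t(f)D^j_t(h)$, so its displayed upper-triangular system, read literally, would force $b_1=\cdots=b_r=0$. Your factorization $h\equiv t^m v \pmod{t^n}$ avoids that slip entirely.
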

\begin{proof}
Set
$v:=\big(D_t^0(\gamma(s)),\dots,D_t^{n-1}(\gamma(s))\big)\in \F_q^n$, and $m:=\min\{\,\ell\in\{0,\dots,n-1\}\mid v_\ell\neq 0\,\}$.
 By Lemma~\ref{lm:change of trivialization at a point}, for every  $f\in O_p^\times$ and for $\gamma':=f\gamma$ one has
\begin{equation}\label{eq:short-mat}
\big(D_t^0(\gamma'(s)),\dots,D_t^{n-1}(\gamma'(s))\big)=v\,S_p(f),
\end{equation}
where $S_p(f)\in \GL_n(\F_q)$ is lower triangular with entries $(S_p(f))_{i,j}=D_t^{\,i-j}(f)$ for $j\le i$. %We seek $f\in O_p^\times$ such that $vS_p(f)=e_m$.
Now, for any $f\in O_p^{\times}$ we may write 
$f(t)= \sum_{r=0}^{n-1} b_r t^r \pmod{t^n}$, with $b_0\neq 0$,
Then $D_t^r(f)=b_r$ for $0\le r\le n-1$, so $S_p(f)$ is determined by $b_0,\dots,b_{n-1}$.
Set $w:=vS_p(f)=(w_0,\dots,w_{n-1})$. A direct multiplication gives, for each $k$,
$w_k=\sum_{i=k}^{n-1} v_i\,b_{i-k}$.
Since $v_0=\cdots=v_{m-1}=0$, we have $w_k=0$ for $k<m$.
Thus the condition $w=e_m$ is equivalent to the system
\begin{equation}\label{eq:short-system}
w_m=1,\qquad w_{m+1}=w_{m+2}=\cdots=w_{n-1}=0.
\end{equation}
Put $r:=n-1-m$ and consider the indeterminates $b_0,\dots,b_r$ (note that $w_k$ only involves $b_0,\dots,b_{n-1-k}$).
For $u=0,\dots,r$ (i.e.\ $k=m+u$), we have
$w_{m+u}=\sum_{j=0}^{r-u} v_{m+u+j}\,b_j$.
This gives an upper triangular linear system in the variables $b_0,\dots,b_r$:
\[
\begin{pmatrix}
w_m\\
w_{m+1}\\
\vdots\\
w_{n-2}\\
w_{n-1}
\end{pmatrix}
=
\begin{pmatrix}
v_m     & v_{m+1} & \cdots & v_{n-2} & v_{n-1}\\
0       & v_m     & \cdots & v_{n-3} & v_{n-2}\\
\vdots  & \vdots  & \ddots & \vdots  & \vdots\\
0       & 0       & \cdots & v_m     & v_{m+1}\\
0       & 0       & \cdots & 0       & v_m
\end{pmatrix}
\begin{pmatrix}
b_0\\
b_1\\
\vdots\\
b_{r-1}\\
b_r
\end{pmatrix}, \  r=n-1-m.
\]
Since $v_m\neq 0$, the system admits a solution $(b_0,\dots,b_r)\in \F_q^{r+1}$ with $b_0\neq 0$. Now, choose any lift $f\in O_p^\times$ whose Taylor expansion at $p$ respect to $t$ agrees with the above coefficients modulo $t^n$. Then \eqref{eq:short-mat} and \eqref{eq:short-system} give $vS_p(f)=e_m$, i.e. $\big(D_t^0(\gamma'(s)),\dots,D_t^{n-1}(\gamma'(s))\big)=e_m$. Finally, by definition of $D_t^\ell$, this is equivalent to $\gamma'(s)(t)= t^m \pmod{t^n}$.
\end{proof}

\begin{theorem}\label{th:dblk-achievable}
There exist trivializations $\gamma_D'=(\gamma_1',\dots,\gamma_s')$ (with uniformizers fixed) such that
\[
d_H\big(C(X,L,D,t_D,\gamma_D')\big)=d_{\mathrm{blk}}(X,L,D).
\]
That is, the minimum Hamming distance achievable by varying $(t_D,\gamma_D)$ is exactly $d_{\mathrm{blk}}(X,L,D)$.
\end{theorem}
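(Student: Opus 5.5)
By Lemma~\ref{lm:blk-vs-hamming}, every choice of Taylor data satisfies $d_H\ge d_{\mathrm{blk}}(X,L,D)$. It therefore suffices to construct trivializations $\gamma_D'$, with the uniformizers $t_D$ kept fixed, for which some nonzero codeword has Hamming weight exactly $d_{\mathrm{blk}}(X,L,D)$. If the code is zero, or $k=0$, there is nothing to prove. Otherwise let $r:=d_{\mathrm{blk}}(X,L,D)$. Pick $s\in H^0(X,L)$ whose image $\tilde c\in H^0(X,L|_D)=\bigoplus_i L_{p_i}/\fm_{p_i}^{n_i}L_{p_i}$ attains the minimum, so that $\tilde c$ has exactly $r$ nonzero blocks. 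Call the corresponding indices $I=\{i_1,\dots,i_r\}$. Since the block weight is intrinsic, as noted in the remark after Definition~\ref{def:dblk-code}, this choice of $s$ does not depend on $(t_D,\gamma_D)$.

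Next, sparsify $s$ block by block. For each $i\in I$, the block $\tilde c_i$ is nonzero, so $\gamma_i(s)(t_i)\not\equiv 0 \pmod{t_i^{n_i}}$. Lemma~\ref{lm:local-sparsify}, applied at $p_i$ with $n=n_i$, then gives $f_i\in O_{p_i}^\times$ such that $\gamma_i':=f_i\gamma_i$ satisfies $\gamma_i'(s)\equiv t_i^{m_i}\pmod{t_i^{n_i}}$. The $i$-th block of $ev_{t_D,\gamma'_D}(s)$ is then the unit vector $e_{m_i}$. For $i\notin I$, set $\gamma_i':=\gamma_i$. There the block of $\tilde c$ is zero, and it stays zero under any trivialization, because the change-of-trivialization matrices of Lemma~\ref{lm:change of trivialization at a point} are invertible.

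With the data $(t_D,\gamma_D')$, the codeword $c=ev_{t_D,\gamma_D'}(s)$ therefore has exactly one nonzero coordinate in each block indexed by $I$ and none elsewhere, so $\wt_H(c)=r$. Hence $d_H(C(X,L,D,t_D,\gamma_D'))\le r$, and combined with the lower bound this gives equality. The last sentence of the theorem follows because $r$ is a lower bound for every choice of data and is attained by this one.

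The main technical point is already handled by Lemma~\ref{lm:local-sparsify}. The only other thing to check is that a trivialization chosen to sparsify $s$ at $p_i$ is a legitimate choice there, and this holds because each $\gamma_i$ can be chosen independently at every point. Note also that changing the trivializations may make other codewords heavier but never lighter than $r$, so only the single minimizing codeword needs to be controlled.
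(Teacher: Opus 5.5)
Your proof is correct and follows the paper's argument essentially step for step: choose a section whose image realizes $d_{\mathrm{blk}}(X,L,D)$, normalize each nonzero block to a standard basis vector via Lemma~\ref{lm:local-sparsify} while keeping the trivializations at the zero blocks unchanged, and then close with the lower bound of Lemma~\ref{lm:blk-vs-hamming}. Your extra remarks spell out details the paper leaves implicit: zero blocks stay zero because the change-of-trivialization matrices are invertible, and the local choices at distinct points are independent.
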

\begin{proof}
Fix the uniformizers $t_D$ and a section $s_0\in H^0(X,L)\setminus H^0(X,L(-D))$ such that $\wt_{\mathrm{blk}}(\ev_{t_D,\gamma_D}(s_0))=d_{\mathrm{blk}}(X,L,D)$.
Let $S\subseteq\{1,\dots,s\}$ be the set of indices $i$ such that the $i$--th block of $\ev_{t_D,\gamma_D}(s_0)$ is nonzero.
Thus $|S|=d_{\mathrm{blk}}(X,L,D)$, and for $i\notin S$ we have $\gamma_i(s_0)\in \fm_{p_i}^{n_i}$.
For each $i\in S$, apply Lemma~\ref{lm:local-sparsify} at $p_i$ with $n=n_i$ to find a unit $f_i\in O_p^{\times}$ such that the new trivialization  $\gamma_i':=f_i\,\gamma_i$ 
satisfies $(D_{t_i}^0(\gamma_i'(s_0)),\dots,D_{t_i}^{n_i-1}(\gamma_i'(s_0)))= e_{m_i}$ with  $e_{m_i}$ a standard basis vector in $\F_q^{n_i}$.
For $i\notin S$, set $\gamma_i':=\gamma_i$. Consider the code 
$$C':=C(X,L,D,t_D,\gamma_D').$$
The codeword $c_0':=ev_{t_D,\gamma_D'}(s_0)\in C'$ has exactly one nonzero coordinate in each block indexed by $S$, and it is zero on blocks $i\notin S$.
Therefore $\wt_H(c_0')=|S|=d_{\mathrm{blk}}(X,L,D)$, which implies
$d_H(C')\le d_{\mathrm{blk}}(X,L,D)$. On the other hand, Lemma~\ref{lm:blk-vs-hamming} gives 
$d_H(C')\ge d_{\mathrm{blk}}(X,L,D)$. Hence equality holds.
\end{proof}

%\begin{remark}[Algorithmic construction]\label{rmk:algorithm-dblk}
%The proof of Theorem~\ref{th:dblk-achievable} yields a concrete procedure.
%
%\smallskip
%\noindent\textbf{Input:} $(X,L,D)$ and an initial choice $(t_D,\gamma_D)$.
%
%\noindent\textbf{Step 1:} Compute $d_{\mathrm{blk}}(X,L,D)$ by searching for the largest $T\subseteq\{1,\dots,s\}$ such that
%\[
%\ell\!\left(L-\sum_{i\in T} n_i p_i\right)>\ell(L-D),
%\]
%and then setting $d_{\mathrm{blk}}=s-|T|$ (equivalently, find $s_0$ with maximal number of vanishing blocks).
%
%\noindent\textbf{Step 2:} Choose $s_0\in H^0\!\left(L-\sum_{i\in T}n_ip_i\right)\setminus H^0(L-D)$.
%
%\noindent\textbf{Step 3:} For each $i$ with nonzero $i$--th block, write the truncated expansion
%$\gamma_i(s_0)(t_i)\bmod t_i^{n_i}$ and compute its unit inverse $u_i(t_i)$ modulo $t_i^{n_i-m_i}$ as in Lemma~\ref{lm:local-sparsify}.
%
%\noindent\textbf{Step 4:} Set $\gamma_i':=u_i\,\gamma_i$ for those $i$; leave the other $\gamma_i$ unchanged.
%
%\noindent\textbf{Output:} $(t_D,\gamma_D')$ such that $d_H(C(X,L,D,t_D,\gamma_D'))=d_{\mathrm{blk}}(X,L,D)$.
%\end{remark}

\subsection{Existence of parameters with $d_H \ge d$}\label{subsec:large-q-existence}

We now address the following design problem: given the geometric data $(X,L,D)$ and a natural number $d$, we seek to determine conditions that ensure the existence of  Taylor trivializations $(t_D, \gamma_D)$ such that the resulting differential Goppa code satisfies $d_H \ge d$.

By the Singleton bound, a necessary condition is $d \le M-k+1$. Furthermore, Lemma~\ref{lm:blk-vs-hamming} implies that $d \ge d_{\mathrm{blk}}(X,L,D)$ is a natural lower bound for the minimum distance achievable within this framework. Therefore, $d$ must satisfy
$$
d_{\mathrm{blk}}(X,L,D) \le d \le M-k+1.
$$

Recall from Definition~\ref{def:taylor-group} that the Taylor group $G_n(p)$ acts on the set of local parameters. In this subsection, we simplify the search space by fixing  uniformizers $t_D = (t_1, \dots, t_s)$ and allowing only the trivializations to vary. 
This requires to study the orbit of an initial choice $\gamma_D$ under the action of the subgroup of local rescalings:
\[
\prod_{i=1}^s (J^{n_i}_{p_i})^\times \subset \prod_{i=1}^s G_{n_i}(p_i).
\]
Since $(J^n_p)^\times$ acts linearly on the jet space via the lower triangular matrices $S_p(f)$ (see Lemma \ref{lm:action local parameters}), this restriction allows us to treat the code parameters as coordinates in an affine variety.

\begin{definition}\label{def:param-space}
For each point $p_i \in \Supp(D)$, let $U_{n_i}$ be the variety parametrizing the group of units of the $n_i$-th jet $J_{p_i}^{n_i}$:
\begin{equation}\label{eq:coord param-space}
U_{n_i} := \bigl\{\, (b_{i,0}, \dots, b_{i,n_i-1}) \in \mathbb{F}_q^{n_i} : b_{i,0} \neq 0 \,\bigr\}.
\end{equation}
We define the {parameter space of trivializations} as the product variety:
\[
\mathcal{P} := \prod_{i=1}^s U_{n_i}, \quad \dim \mathcal{P} = \sum_{i=1}^s n_i = M.
\]
\end{definition}

\begin{remark}
Note that any element $f_i \in U_{n_i}$ defines a unit $f_i(t_i) \equiv \sum_{j=0}^{n_i-1} b_{i,j} t_i^j \pmod{t_i^{n_i}}$. 
\end{remark}
Given an initial trivialization $\gamma_D$ and a point $f = (f_1, \dots, f_s) \in \mathcal{P}$, we denote by $\gamma_D(f)$ the trivialization obtained by the action of the units, i.e., $\gamma_i(f) := f_i \cdot \gamma_i$. The following lemma shows that all variations of trivializations arise in this way.

\begin{lemma}\label{lm:transitivity-units}
Fix  uniformizers $t_D = (t_1, \dots, t_s)$. Let ${T}_D$ be the affine variety $\prod_{i=1}^{s} \Iso^{t_i}_{\mathrm{Tay}}(J^{n_i}_{p_i}(L),\mathbb{F}_{q}^{n_i})$. Then, for any fixed  $\gamma_D \in {T}_D$, the map induced by $\widehat{\Psi}$ (see \ref{eq:action on Taylor}))
\begin{equation*}
\begin{split}
\widehat{\Psi}_{\gamma_D}:\mathcal{P} & \longrightarrow {T}_D \\
f = (f_1, \dots, f_s) & \longmapsto  \gamma_D(f) = (f_1 \gamma_1, \dots, f_s \gamma_s)
\end{split}
\end{equation*}
is an isomorphism of affine varieties. In particular, any tuple of trivializations is uniquely determined by the action of the subgroup of rescalings on a fixed initial tuple of trivializations.
\end{lemma}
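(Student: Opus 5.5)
The plan is to reduce to a single point $p_i$ and then take the product, since both $\mathcal{P}$ and $T_D$ are products and $\widehat{\Psi}_{\gamma_D}$ acts componentwise. Fix $i$, the uniformizer $t_i$, and $\gamma_i\in\Iso^{t_i}_{\mathrm{Tay}}(J^{n_i}_{p_i}(L),\F_q^{n_i})$. Every element of $T_D$ at $p_i$ has the form $u_{t_i,\gamma'}$ for some trivialization $\gamma':L_{p_i}\simeq O_{p_i}$ with the \emph{same} uniformizer $t_i$. Two such differ by a unit, $\gamma'=f\gamma_i$ with $f\in O_{p_i}^\times$ (Lemma~\ref{lm:map theta}, 1).

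For surjectivity, note that $u_{t_i,\gamma'}$ only depends on $\gamma'$ modulo $\fm_{p_i}^{n_i}$. So $u_{t_i,f\gamma_i}$ depends only on the truncation $\overline f=\sum_{j<n_i}b_{i,j}t_i^j$, i.e.\ on the point $(b_{i,0},\dots,b_{i,n_i-1})\in U_{n_i}$. Hence every element of the $i$-th factor of $T_D$ is $\widehat\Psi_{\gamma_D}(f)$ for some $f\in U_{n_i}$.

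For injectivity, use freeness of the action from Proposition~\ref{prop:tay-simply-transitive}, restricted to the subgroup $(J^{n_i}_{p_i})^\times\times\{\mathrm{id}\}\subset G_{n_i}(p_i)$. Concretely, if $u_{t_i,f\gamma_i}=u_{t_i,f'\gamma_i}$, then multiplication by $\overline f$ and by $\overline{f'}$ agree on $J^{n_i}_{p_i}$. Evaluating at $1$ gives $\overline f=\overline{f'}$, hence the coefficient tuples agree. One should also check that the identification of $f\gamma_i$ with the action $\widehat\Psi$ involves $a=\overline f^{\,-1}$ rather than $\overline f$, as in the proof of the closure lemma preceding Proposition~\ref{prop:tay-simply-transitive}. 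Inversion is a bijection of $(J^{n_i}_{p_i})^\times$, so this does not affect bijectivity.

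For the variety structure, transport everything to matrices via $\Phi_{t_i,\gamma_i}$ of \eqref{eq: Iso-Gln}. By Lemma~\ref{lm:change of trivialization at a point}, $\Phi_{t_i,\gamma_i}(u_{t_i,f\gamma_i})$ is the lower triangular Toeplitz matrix $S_{p_i}(f)$ (or its transpose/inverse, depending on conventions) with entries $b_{i,m-j}$. This is exactly the description of $\Iso^{t}_{\mathrm{Tay}}$ as the open subset of the affine space of Toeplitz matrices with nonzero constant term, given in \S\ref{rmk:algebraic-structure}(2). The map $(b_{i,j})\mapsto S_{p_i}(f)$ is linear, and its inverse reads off the first column, which is also linear. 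So both maps are morphisms and the map is an isomorphism of affine varieties $U_{n_i}\simeq\Iso^{t_i}_{\mathrm{Tay}}$; taking products gives the claim.

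The main obstacle is being honest about what the affine variety structure on $T_D$ is, since the paper defines it only loosely. I would fix it as the image under $\Phi$ just described, after which the argument is formal. A minor point is that the indexing of $J^{n}$ versus $\F_q^{n}$ (order $n$ versus $n-1$ truncation) must be treated consistently, with truncation modulo $t^{n_i}$.
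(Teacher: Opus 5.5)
Your proposal is correct and follows essentially the paper's route: bijectivity comes from the simply transitive action of Proposition~\ref{prop:tay-simply-transitive}, and the variety isomorphism comes from the Toeplitz description in \S\ref{rmk:algebraic-structure}(2). You are more explicit than the paper's two-line proof, in that you restrict the action to the units subgroup with the uniformizer fixed, get surjectivity from the fact that two trivializations differ by a unit, and get injectivity by evaluating at $1$.
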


\begin{proof}
The bijectivity follows from Proposition \ref{prop:tay-simply-transitive} (the orbit maps of simply transitive group actions are bijetive). On the other hand, it follows easily from Remark \ref{rmk: Taylor algebraic group} and Remark \ref{rmk:algebraic-structure} 2), that the bijection is an isomorphism of affine varieties.
\end{proof}

Fix a basis $s_1,\dots,s_k$ of $\im(H^0(\Theta))\subseteq H^0(X,L|_D)$, and consider the corresponding differential Goppa code obtained using the fixed uniformizers $t_D$ and  trivializations $\gamma_D$, $C:=C(X,L,D,t_D,\gamma_D)\subset \F_q^M$. Now, for each $f\in\mathcal{P}$, we get a new code by using the trivializations $\gamma_{D}(f)$, which we denote by $C(f)$. 

\begin{lemma}\label{lm: regularity-G}
If $G$ is a generator matrix of $C$, then $G(f):=G\,S(f)$ is a generator matrix of $C(f)$. Here, $S(f)\in \GL_M(\F_q)$ is the block-diagonal matrix
\[
S(f)=\diag\big(S_{p_1}(\overline f_1),\dots,S_{p_s}(\overline f_s)\big),
\]
and each block $S_{p_i}(\overline f_i)\in \GL_{n_i}(\F_q)$ is the lower triangular matrix given in Lemma \ref{lm:action local parameters}. In particular, each entry of the generator matrix $G(f)$ is a regular function (in fact, a linear form) on the parameter space $\mathcal{P}$ in the variables $b_{i,j}$.
\end{lemma}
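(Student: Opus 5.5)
The plan is to work one point of $\Supp(D)$ at a time, using Lemma~\ref{lm:change of trivialization at a point}, and then assemble the blocks. Let $G$ be the $k\times M$ generator matrix of $C$ whose rows are
$$\ev_{t_D,\gamma_D}(\tilde s_1),\dots,\ev_{t_D,\gamma_D}(\tilde s_k),$$
where $\tilde s_r\in H^0(X,L)$ are lifts of the chosen basis $s_1,\dots,s_k$ of $\im(H^0(\Theta_D))$. By Lemma~\ref{lm:dimension Goppa}, $\ev_{t_D,\gamma_D}$ has kernel $H^0(X,L(-D))$. Hence $C$ is spanned by these $k$ rows, and for the new trivializations $C(f)$ is spanned by the rows $\ev_{t_D,\gamma_D(f)}(\tilde s_r)$. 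So it suffices to show that
$$\ev_{t_D,\gamma_D(f)}(\tilde s)=\ev_{t_D,\gamma_D}(\tilde s)\,S(f)\quad\text{for every } \tilde s\in H^0(X,L).$$
Then $G(f)=G\,S(f)$. This matrix has rank $k$ because $S(f)$ is invertible, so it is indeed a generator matrix of $C(f)$.

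For the block identity, fix $i$ and a point $f=(f_1,\dots,f_s)\in\mathcal P$. Lift the truncated polynomial $\sum_j b_{i,j}t_i^j$ to a unit of $O_{p_i}$ (still written $f_i$). This is possible since $b_{i,0}\neq0$, as in the proof of Lemma~\ref{lm:local-sparsify}. Lemma~\ref{lm:change of trivialization at a point} with $\gamma'=f_i\gamma_i$ and $n=n_i$ then shows that the $i$-th block of $\ev_{t_D,\gamma_D(f)}(\tilde s)$ equals the $i$-th block of $\ev_{t_D,\gamma_D}(\tilde s)$ multiplied on the right by the lower triangular matrix with entries $D^{m-j}_{t_i}(f_i)$. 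That matrix is $S_{p_i}(\overline f_i)$ of Lemma~\ref{lm:action local parameters} with $\sigma=\mathrm{id}$.

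Two checks are needed here. First, the entries $D^{r}_{t_i}(f_i)$ for $r\le n_i-1$ equal $b_{i,r}$, so they depend only on $\overline f_i=f_i\bmod t_i^{n_i}$ and not on the chosen lift. Second, the convention must match: the paper writes $\gamma_D(f)$ with $\gamma_i(f)=f_i\gamma_i$, and in Lemma~\ref{lm:action local parameters} the pure rescaling element $(a,\mathrm{id})$ gives exactly $S_p(a)$. Because the blocks of the code coordinates are independent, concatenating the $s$ identities gives the block-diagonal $S(f)$.

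For regularity, each entry of $S_{p_i}(\overline f_i)$ is either $0$ or some $b_{i,m-j}$. Hence each entry of $G\,S(f)$ has the form $\sum_{m\ge j}G_{r,(i,m)}\,b_{i,m-j}$, which is a linear form in the coordinates $b_{i,j}$ of $\mathcal P$ with constant coefficients coming from $G$.

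The main obstacle I expect is bookkeeping rather than anything conceptual. One has to make sure the $S_p$ used is the one from Lemma~\ref{lm:change of trivialization at a point}, with row vectors multiplied on the right and indices $0\le j\le m\le n_i-1$. This needs reconciling with the index shift between $J^{n_i}_{p_i}$ and $J^{n_i-1}_{p_i}(L)$ in the definition of $\mathcal P$: the parameters $b_{i,0},\dots,b_{i,n_i-1}$ are exactly the ones that are used.
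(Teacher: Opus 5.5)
Your route is the paper's own. You rescale each block via Lemma~\ref{lm:change of trivialization at a point}, assemble the block-diagonal matrix, and read off linearity in the $b_{i,j}$. The extra care you add is all fine: the lifts $\tilde s_r$ of the basis, the independence of the lift of $f_i$, and rank $k$ from invertibility of $S(f)$.

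The one step that fails is the convention check you single out as the main obstacle. Multiplication by $f_i$ sends the block $v=(v_0,\dots,v_{n_i-1})$ of Taylor coefficients of $\gamma_i(\tilde s)$ to $w$ with $w_m=\sum_{j\le m}b_{i,m-j}v_j$. In matrix form this is $w^{\mathsf T}=S_{p_i}(\overline f_i)\,v^{\mathsf T}$, the column-vector convention of Lemma~\ref{lm:action local parameters}, where $h_m=\sum_j\rho_{m,j}r_j$. For codewords written as rows it therefore reads $w=v\,S_{p_i}(\overline f_i)^{\mathsf T}$. By contrast, $(v\,S_{p_i}(\overline f_i))_m=\sum_{j\ge m}v_j\,b_{i,j-m}$, which is a different vector.

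The row-vector formula stated in Lemma~\ref{lm:change of trivialization at a point} is off by exactly this transpose, and you inherited it; its own proof actually derives the column identity. A concrete check: take $X=\mathbb P^1$, $L=O_X$, $D=2p$ and $\gamma=\mathrm{id}$, so $k=1$ and $G=(1,0)$. With $f=1+t$, one gets $C(f)=\langle(1,1)\rangle$, but $G\,S(f)=(1,0)$. So the identity $G(f)=G\,S(f)$, read literally with lower triangular blocks, is false.

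The fix is immediate: $G(f)=G\,S(f)^{\mathsf T}$, with upper triangular blocks. Since $S(f)^{\mathsf T}$ is still invertible, the generator-matrix claim survives. Its entries are still $0$ or some $b_{i,r}$, so the linear-form conclusion used in Theorem~\ref{th:large-q-existence} is also unchanged. The paper's proof has the same slip: it says multiplication by $f_i$ ``is represented by'' $S_{p_i}(\overline f_i)$ and then multiplies $G$ on the right.
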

\begin{proof}
The entries of $G$ correspond to the coefficients of the Taylor expansions of  $\{s_1, \dots, s_k\}$ under the initial trivializations $\gamma_D$.
By Lemma \ref{lm:transitivity-units}, the new code $C(f)$ is obtained by applying the rescaling action of $f = (f_1, \dots, f_s) \in \mathcal{P}$, $\gamma_D(f) = (f_1 \gamma_1, \dots, f_s \gamma_s)$. In $\mathbb{F}_q^{n_i}$, the multiplication by  $f_i$  is represented by $S_{p_i}(\overline f_i) \in \text{GL}_{n_i}(\mathbb{F}_q)$.
Therefore, the new generator matrix is given by:
\[
G(f) = G \cdot \text{diag}\big(S_{p_1}(\overline f_1), \dots, S_{p_s}(\overline f_s)\big) = G \, S(f).
\]
Finally, note that the entries of the matrices $ S_{p_i}(\overline f_i)$ are linear combinations of the coefficients $b_{i,j}$. Since $G(f)=GS(f)$, each entry of $G(f)$ is a linear form on the coefficients $b_{i,j}$ as well.
\end{proof}

\begin{lemma}\label{lm:minors}
Let $C(f)\subseteq \F_q^M$ be the code generated by $G(f)$. Then $d_H(C(f))\ge d$ if and only if, for every subset $J\subseteq\{1,\dots,M\}$ with $|J|=M-d+1$, the submatrix $G(f)_J$ obtained by restricting to columns in $J$ has rank $k$.
Equivalently, for every such $J$ there exists at least one $k\times k$ minor of $G(f)_J$ which is nonzero.
\end{lemma}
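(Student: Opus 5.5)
The plan is to reduce the statement to the standard characterization of minimum distance by erasures: a linear code of dimension $k$ has $d_H\ge d$ if and only if every set of $d-1$ coordinates can be deleted without losing injectivity. Throughout, $k=\dim C(f)$ is independent of $f$ (Lemma~\ref{lm:dimension Goppa}), and $G(f)=G\,S(f)$ is a $k\times M$ matrix of full row rank $k$, because $S(f)\in\GL_M(\F_q)$ by Lemma~\ref{lm: regularity-G}. Every codeword is then $c=xG(f)$ for a unique $x\in\F_q^k$, and its restriction to a column set $J$ is $c_J=xG(f)_J$.

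For the direction ($\Leftarrow$), assume that $G(f)_J$ has rank $k$ for every $J$ with $|J|=M-d+1$.
1. Take a nonzero codeword $c=xG(f)$ with $x\neq 0$, and suppose $\wt_H(c)\le d-1$.
2. Then the zero set of $c$ has at least $M-d+1$ elements, so we can choose $J$ inside it with $|J|=M-d+1$.
3. This gives $xG(f)_J=c_J=0$ with $x\neq 0$, so the rows of $G(f)_J$ are dependent and its rank is less than $k$, a contradiction.

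For the direction ($\Rightarrow$), assume that some $J$ with $|J|=M-d+1$ has $\operatorname{rank} G(f)_J<k$.
1. Then there is $x\neq 0$ with $xG(f)_J=0$.
2. The codeword $c=xG(f)$ is nonzero, since $G(f)$ has full row rank.
3. Its support lies in the complement of $J$, which has $d-1$ elements, so $d_H(C(f))\le d-1$.

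Some edge cases need a comment. If $M-d+1<k$, no $J$ of that size can have rank $k$; the statement is still consistent, because the Singleton bound already forces $d_H<d$. If $M-d+1>M$ (that is, $d\le 0$), both sides hold trivially.

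The last sentence of the statement follows from the elementary fact that a $k\times|J|$ matrix has rank $k$ if and only if some $k\times k$ minor is nonzero. By Lemma~\ref{lm: regularity-G}, each such minor is a polynomial of degree at most $k$ in the variables $b_{i,j}$, which is presumably how the result will be used later.

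I do not expect any step to be a real obstacle. The only point needing care is to use full row rank of $G(f)$, which guarantees that $x\neq 0$ gives a nonzero codeword; this comes from the invertibility of $S(f)$ together with $G$ being a generator matrix.
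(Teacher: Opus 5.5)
Your proposal is correct and is essentially the paper's approach: the paper's proof just says ``This is the classical characterization,'' and you have written out that standard argument (a low-weight codeword $\Leftrightarrow$ a nonzero $x$ with $xG(f)_J=0$ for some $J$ with $|J|=M-d+1$). You also correctly flag the one hypothesis the classical fact needs, namely that $G(f)=G\,S(f)$ has full row rank $k$, which follows from $S(f)\in\GL_M(\F_q)$.
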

\begin{proof}
This is the classical characterization
\end{proof}

\begin{theorem}\label{th:large-q-existence}
Fix $(X,L,D)$ and $d\in\mathbb{N}$ with $1\le d\le M-k+1$. Then there exists a constant $\Delta=\Delta(M,k,d)$ such that, if $q>\Delta$ the following holds: if there exists an extension field $\F_{q^m}$ and parameters $f^\star\in \mathcal P(\F_{q^m})$ such that
\[
d_H\big(C(X,L,D,t_D,\gamma_D(f^\star))\big)\ \ge\ d,
\]
then, there exists $f\in \mathcal P(\F_q)$ satisfying
\[
d_H\big(C(X,L,D,t_D,\gamma_D(f))\big)\ \ge\ d.
\]
\end{theorem}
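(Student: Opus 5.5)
The plan is a Schwartz--Zippel type argument on the parameter space $\mathcal P$, using the polynomial description of the generator matrix. By Lemma~\ref{lm: regularity-G}, the entries of $G(f)=G\,S(f)$ are linear forms in the $M$ coordinates $b_{i,j}$ of $\mathcal P$, with coefficients in $\F_q$ (the initial matrix $G$ is defined over $\F_q$). For each $J\subseteq\{1,\dots,M\}$ with $|J|=M-d+1$, and each $k$-subset $K\subseteq J$, the minor $\det G(f)_{K}$ is therefore a polynomial over $\F_q$ in the variables $b_{i,j}$ of total degree at most $k$. By Lemma~\ref{lm:minors}, the condition $d_H(C(f))\ge d$ says that for every such $J$ some minor $\det G(f)_K$, $K\subseteq J$, is nonzero. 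To turn this into a single nonvanishing condition, I will use that the field is not fixed: the hypothesis gives $f^\star\in\mathcal P(\F_{q^m})$ at which, for each $J$, some minor $\det G(f^\star)_{K_J}$ is nonzero. Fix one such $K_J$ for each $J$ and define
\[
F(b):=\prod_{J}\det G(b)_{K_J}\cdot \prod_{i=1}^s b_{i,0}\in \F_q[b_{i,j}].
\]
Then $F(f^\star)\neq 0$, so $F$ is a nonzero polynomial over $\F_q$; conversely any $f\in\F_q^M$ with $F(f)\ne 0$ lies in $\mathcal P(\F_q)$ (since all $b_{i,0}\ne0$) and satisfies $d_H(C(f))\ge d$ by Lemma~\ref{lm:minors}.

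It remains to find an $\F_q$-rational nonzero of $F$. The degree is bounded by $\deg F\le k\binom{M}{d-1}+s\le k\binom{M}{d-1}+M$, a quantity depending only on $(M,k,d)$. By the Schwartz--Zippel lemma (or simply: a nonzero polynomial of degree $<q$ in each variable, hence total degree $<q$, cannot vanish on all of $\F_q^M$), if $q>\deg F$ there exists $f\in\F_q^M$ with $F(f)\neq0$. So I will take $\Delta(M,k,d):=k\binom{M}{d-1}+M$; in fact Schwartz--Zippel gives that a proportion at least $1-\Delta/q$ of parameters work.

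The only delicate point, which I expect to be the main obstacle, is ensuring that the minors are genuinely polynomials with coefficients in $\F_q$ that are valid over extensions, i.e., that the parametrization $f\mapsto G\,S(f)$ of Lemma~\ref{lm: regularity-G} and the bijection of Lemma~\ref{lm:transitivity-units} commute with base change to $\F_{q^m}$, so that $C(X,L,D,t_D,\gamma_D(f^\star))$ really is generated by $G\,S(f^\star)$. This holds because $S_{p_i}(\overline f_i)$ is given by the universal Toeplitz formula $(S)_{m,j}=b_{i,m-j}$, which is defined over the prime field, and $G$ has entries in $\F_q$; I will state this explicitly. A secondary subtlety is that one must choose the minors $K_J$ after seeing $f^\star$, rather than multiplying over all minors, since an individual minor may vanish identically; the product over the chosen ones sidesteps this.
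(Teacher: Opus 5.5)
Your proof is correct and follows the paper's argument: you use $f^\star$ to pick one nonvanishing $k\times k$ minor for each $J$, multiply them into a single nonzero polynomial over $\F_q$ of degree at most $k\binom{M}{d-1}$, and apply Schwartz--Zippel over $\F_q$. Your only change, the extra factor $\prod_i b_{i,0}$, improves on the paper. The paper instead asserts that $P(f)\neq 0$ already forces every $b_{i,0}\neq 0$, but $b_{i,0}=0$ kills only one column of the $i$-th block of $G(f)$, and that column need not lie in any of the chosen sets $K_J$. Your version closes this gap at the cost of raising $\Delta$ by at most $M$.
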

\begin{proof}
By Lemma \ref{lm: regularity-G}, each entry of $G(f)$ is a regular function (in fact, a linear form) on the parameter space $\mathcal{P}$.
By Lemma~\ref{lm:minors}, the condition $d_H(C(f)) \ge d$ is equivalent to $\rk(G(f)_J) = k$ for every subset $J \subseteq \{1, \dots, M\}$ with $|J| = M-d+1$. By hypothesis, there exists $f^\star \in \mathcal{P}(\mathbb{F}_{q^m})$ such that this condition holds. Thus, for each $J$, there exists at least one $k \times k$ minor of $G(f^\star)_J$ which is non-zero. Let $P_J(f)$ denote such a minor; it is a polynomial of degree at most $k$ and with $M$ variables. We define
\[
P(f) := \prod_{J:\ |J|=M-d+1} P_J(f) \in \mathbb{F}_q[b_{i,j}].
\]
By construction, $P(f^\star) \neq 0$, so $P\neq 0$. Its total degree is bounded by
\[
\deg(P) \le k \cdot \binom{M}{M-d+1} = k \cdot \binom{M}{d-1} =: \Delta(M,k,d).
\]
By the Schwartz--Zippel lemma (applied to $P$), the number of zeros of $P$ in $\mathbb{F}_{q}^M$ is at most $\Delta q^{M-1}$. If $q > \Delta$, then $\Delta q^{M-1} < q^M$, ensuring the existence of $f \in \mathbb{F}_{q}^M$ such that $P(f) \neq 0$. To conclude, note that the condition $P(f) \neq 0$ already implies $b_{i,0} \neq 0$ for all $i$ (otherwise the corresponding columns in $G(f)$ would be zero, contradicting the full rank of the minors). Thus, $f$ necessarily lies in $\mathcal{P}(\mathbb{F}_q)$, and the code $C(f)$ satisfies $d_H(C(f)) \ge d$.
\end{proof}

\begin{remark}\label{rmk:large-q-comments}
Theorem~\ref{th:large-q-existence} shows that the Zariski open subset of $\mathcal{P}$ defined by $d_H\ge d$ has a rational point if it is non-empty and the base field is large enough. The bound $\Delta(M,k,d)$ given here is certainly rough and it would be interesting for practical reasons to refine it.
\end{remark}

\section{Two fundamental results}

In this final section, we present two fundamental results that highlight the importance of differential Goppa codes. First, we show that any linear block code can be realized as a differential Goppa code on a projective curve. Second, we establish that strong geometric Goppa codes, often called SAG codes in the literature, form a proper subclass of strong differential Goppa codes. Together, these findings emphasize a key advantage of our approach: the ability to construct Goppa-like codes of large length, even when working with curves that have few points, regardless of the base field size.

\subsection{Every linear block code is a differential Goppa code}

Pellikaan, Shen, and van Wee showed that every linear code can be obtained from a curve by means of Goppa's construction, thereby placing arbitrary linear codes within a geometric framework \cite{Pellikan1991}. On the other hand, related realization results in the context of convolutional codes have also been given \cite{IglesiasMunozMunozSerrano}. The following theorem may be regarded as a differential counterpart of this philosophy in the one-point setting on $\mathbb{P}^1$.

\begin{theorem}\label{th: every linear code}
Every linear code, $C$, over $\F_q$ can be expressed as a differential Goppa code $C=C(\mathbb{P}^1,L,D,\Gamma,t_D,\gamma_D)$, with $C(\mathbb{P}^1,L,D,t_D,\gamma_D)$ as in \S \ref{sec:one point} and $\Gamma\subset H^0(\mathbb{P}^1,L)$ being a suitable vector subspace.
\end{theorem}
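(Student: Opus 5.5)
Let $C\subseteq\F_q^n$ be a linear code of length $n$ and dimension $k$, with generator matrix $A\in\F_q^{k\times n}$ of rank $k$. The plan is to use the one-point construction of \S\ref{sec:one point} with the multiplicity equal to the length: $X=\mathbb{P}^1$, $L=O_{\mathbb{P}^1}((n-1)p_\infty)$, $D=n\,p_\infty$, uniformizer $u=t^{-1}$ and trivialization $\gamma(s)=t^{-(n-1)}s$, so that the parameter called $k$ in \S\ref{sec:one point} is taken equal to $n$. By the computation there, the basis $\{1,t,\dots,t^{n-1}\}$ of $H^0(\mathbb{P}^1,L)$ is mapped by $ev_{t_D,\gamma_D}$ to the rows of the anti-diagonal matrix $J_n$. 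Explicitly, $ev_{t_D,\gamma_D}(t^m)=e_{n-1-m}$, the standard basis vector with a $1$ in position $n-1-m$. Hence $G(\infty)=J_n$ is invertible and
$$
ev_{t_D,\gamma_D}:H^0(\mathbb{P}^1,L)\longrightarrow \F_q^n
$$
is a linear isomorphism. Equivalently, $\deg L=n-1<n=\deg D$, so Lemma~\ref{lm:dimension Goppa} gives injectivity, and the dimensions $n=n$ agree.

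Given this isomorphism, we set $\Gamma:=ev_{t_D,\gamma_D}^{-1}(C)\subseteq H^0(\mathbb{P}^1,L)$, which is a $k$-dimensional subspace. Definition~\ref{def:Goppa} then gives directly
$$
C(\mathbb{P}^1,L,D,\Gamma,t_D,\gamma_D)=ev_{t_D,\gamma_D}(\Gamma)=C.
$$
Concretely, if the rows of $A$ are $a^{(r)}=(a^{(r)}_0,\dots,a^{(r)}_{n-1})$, then $\Gamma$ is spanned by the polynomials $\sum_{j} a^{(r)}_j t^{\,n-1-j}$, $r=1,\dots,k$.

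The only point needing care is the verification that $ev_{t_D,\gamma_D}(t^m)=e_{n-1-m}$, namely that $\gamma(t^m)=u^{n-1-m}$ is a monomial in the uniformizer, so that its Hasse derivatives at $p_\infty$ are Kronecker deltas. This is already carried out in \S\ref{sec:one point}, so there is no real obstacle.

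For the two-point version mentioned in the abstract, one could alternatively split the length as $n=n_1+n_2$ between $p_\infty$ and $p_0$. The same argument works there, using the block matrices $J$ and $P(0)$ together with the injectivity given by Lemma~\ref{lm:dimension Goppa}.
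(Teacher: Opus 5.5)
Your proposal is correct and follows essentially the same route as the paper's proof. Both take $L=O_{\mathbb P^1}((n-1)p_\infty)$ and $D=n\,p_\infty$ with the one-point Taylor data, so that $ev_{t_D,\gamma_D}$ is an isomorphism (generator matrix $J_n$), and then set $\Gamma=ev_{t_D,\gamma_D}^{-1}(C)$; your explicit spanning polynomials $\sum_j a^{(r)}_j t^{\,n-1-j}$ coincide with those in the remark following the theorem.
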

\begin{proof}
Let $C\subseteq \F_q^n$ be a linear code of dimension $k$. Let $p_\infty\in \mathbb{P}^1$ be the point at infinity. Set $L:= O_{\mathbb P^1}(n-1)$ and $D:=n\,p_\infty$.
Choose a uniformizer $t$ at $p_\infty$ and a trivialization $\gamma$ as in the one-point construction of \S\ref{sec:one point}. 
As shown in the one-point differential Goppa construction,
$\ev_{t_D,\gamma_D}\colon H^0( \mathbb P^1,L)\longrightarrow \F_q^n$ is an isomorphism (\S\ref{sec:one point}).
Now, defining
\[
\Gamma:=\ev_{t_D,\gamma_D}^{-1}(C)\subseteq H^0(\mathbb P^1,L).
\]
we get
$C=C(\mathbb P^1,L,D,\Gamma,t_D,\gamma_D)$.
\end{proof}

\begin{remark}
\begin{enumerate}
\item The subspace \(\Gamma\subset H^0(\mathbb P^1,L)\) can be written down explicitly from the expression the matrix $G(p_{\infty})$ and a basis (that is, a generator matrix) of $C\subset\F_q^n$, say $\{c_1,\hdots,c_k\}$. A simple calculation shows that a basis of $\Gamma:=G^{-1}(C)\subset V$ is given by the polynomials
\[
f_{\ell}(t)=\sum_{i=0}^{n-1}c_{\ell,n-i}t^{i}, \quad \ell=1,\hdots, k.
\]
where $c_i=(c_{i,1},\hdots,c_{i,n})$ for each $i=1,\hdots,k$.
\item We can state the above theorem in terms of the genus zero differential Goppa codes appearing in the two-point case \S \ref{sec:two points}. Again, we can describe explicitly the subspace $\Gamma$. It is easy to see that $G(\alpha)^{-1}=G(-\alpha)$. Therefore, if a basis (that is, a generator matrix) of $C\subset\F_q^n$ is given by $\{c_1,\hdots,c_k\}$,  a basis of $\Gamma:=G^{-1}(C)\subset V$ is given by the polynomials
\[
f_\ell(t)=\sum_{m=1}^{r}\left(\sum_{j=1}^{m}\binom{m-1}{j-1}(-\alpha)^{m-j}c_{\ell,j}\right)t^{m-1}, \quad \ell=1,\hdots, k.
\]
\item The choice of \(\mathbb P^1\) is made only for simplicity. The same result can be formulated on an elliptic curve under the one-point situation (see \S \ref{sec:on point genus one}).
\end{enumerate}
\end{remark}

\subsection{Strong geometric Goppa codes form a proper subclass}

Theorem~\ref{th: every linear code} shows that allowing non-complete differential Goppa structures to represent linear codes makes the underlying geometric data to be useless. Therefore, in order for the differential Goppa structure to have a direct impact on the parameters and behavior of the code it represents, one must restrict the degrees of freedom allowed in the choice of the differential Goppa structures under consideration. 

A differential (or geometric) Goppa code $C:=C(X,L,D,t_D,\gamma_D)$ with $g=g(X)$, $d=\deg(L)$ and $n=\deg(D)$ is called strong if $2g-2<d<n$. In this case, $H^0(X,L(-D))=H^1(X,L)=0$ and Riemann-Roch theorem allows to compute the dimension of the code:
$$
\dim C=1-g+d.
$$
Strong geometric Goppa codes (also named SAG codes \cite{Pellikan1991}) are particularly interesting for many reasons. We will see now that the class of strong geometric Goppa codes form a proper class of strong differential Goppa codes. That is, there are strong differential Goppa codes that can not be described as a strong geometric Goppa code over any smooth projective curve.

\begin{proposition}
\label{prop:numerical-obstruction-Fq}
Let $q$ be a prime power, and let $n,k\in\mathbb N$ satisfy $1\le k<n$. Assume that there exists a strong geometric Goppa code (in the sense of Tsfasman--Vl\u{a}du\c{t})
$C=C(X,L,D,\overline{\gamma_D})$ over $\F_q$ such that $X$ is a smooth projective curve of genus $g$, $L$ is an invertible sheaf on $X$ of degree $d:=\deg(L)$, $D=p_1+\cdots+p_n$ is a sum of $n$ different rational and $\dim C = k$.
If we set
\[
t_q(n):=\max\!\left\{0,\left\lceil \frac{n-(q+1)}{2\sqrt q}\right\rceil\right\}.
\]
then necessarily
 $
t_q(n)\le k\le n-t_q(n).
 $
%Equivalently, if
%\[
%1\le k\le t_q(n)-1
%\qquad\text{or}\qquad
%n-t_q(n)+1\le k\le n-1,
%\]
%then no such geometric Goppa code can exist.
\end{proposition}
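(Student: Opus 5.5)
The plan is to combine the Hasse--Weil bound with the strong condition $2g-2<d<n$ and Riemann--Roch, as recalled just before the statement.

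First, the $n$ distinct rational points $p_1,\dots,p_n$ lie on $X$, so $n\le |X(\F_q)|\le q+1+2g\sqrt q$. Hence
\[
g\ \ge\ \frac{n-(q+1)}{2\sqrt q},
\]
and since $g$ is an integer and $g\ge 0$, we get $g\ge t_q(n)$.

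Second, we translate this lower bound on $g$ into bounds on $k$. In the strong case, $H^0(X,L(-D))=0$ because $\deg L(-D)=d-n<0$, and $H^1(X,L)=0$ because $d>2g-2$. Riemann--Roch together with Lemma~\ref{lm:dimension Goppa} then gives $k=1-g+d$.

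The upper bound on $k$ comes from $d<n$, i.e. $d\le n-1$. This gives $k\le n-g\le n-t_q(n)$.

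The lower bound on $k$ comes from $d>2g-2$, i.e. $d\ge 2g-1$. This gives $k\ge 1-g+2g-1=g\ge t_q(n)$.

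Combining the two yields $t_q(n)\le k\le n-t_q(n)$.

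No step is a real obstacle. The points that need care are the following:
\begin{itemize}
\item Hasse--Weil applies because $X$ is geometrically irreducible, smooth and projective over $\F_q$, which is the standing assumption.
\item The ceiling in $t_q(n)$ is justified by the integrality of $g$.
\item When $n\le q+1$ the bound reads $t_q(n)=0$, which is trivially true.
\item It is worth checking that the hypothesis $k<n$ is consistent, but it plays no role in deriving the inequalities.
\end{itemize}
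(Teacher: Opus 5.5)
Your proposal is correct and follows essentially the same route as the paper. In the strong range, Riemann--Roch gives $k=1-g+d$. The inequalities $2g-2<d<n$ then yield $g\le k$ and $k\le n-g$, which the paper packages as $g\le\min(k,n-k)$, and the Hasse--Weil bound together with the integrality of $g$ gives $g\ge t_q(n)$.
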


\begin{proof}
Since the code is strong, Riemann--Roch gives  $k=d+1-g $, hence  $d=k+g-1 $.
From  $2g-2<d=k+g-1 $ we get   $g\le k $.
Since  $d=k+g-1<n $ it holds $g<n-k+1 $, hence  $g\le n-k $.
Therefore  
$$g\le \min(k,n-k).$$ 
On the other hand, since $D$ is supported on $n$ distinct rational points,  $n\le \#X(\F_q) $.
By the Hasse--Weil bound (see \cite{Weil1949,Stichtenoth, Ihara1981}),  $\#X(\F_q)\le q+1+2g\sqrt q $.
Thus  $n\le q+1+2g\sqrt q $, so
 $
g\ge \frac{n-(q+1)}{2\sqrt q}.
 $
Since $g$ is an integer,
\[
g\ge \left\lceil \frac{n-(q+1)}{2\sqrt q}\right\rceil.
\]
If this lower bound is negative, it is vacuous, so the effective lower bound is
\[
g\ge t_q(n):=\max\!\left\{0,\left\lceil \frac{n-(q+1)}{2\sqrt q}\right\rceil\right\}.
\]
Combining both inequalities for $g$, we obtain
 $
t_q(n)\le \min(k,n-k),
 $
which is equivalent to
 $
t_q(n)\le k\le n-t_q(n).
 $
This proves the claim.
\end{proof}

\begin{theorem}
\label{thm:proper-subclass-strong-Fq}
Let $q$ be a prime power, and let
\[
t_q(n):=\max\!\left\{0,\left\lceil \frac{n-(q+1)}{2\sqrt q}\right\rceil\right\}.
\]
Let $n,k$ be integers with $1\le k<n$ and assume that $ n\ge q+2+\lfloor 2\sqrt q\rfloor$.
If
\[
1\le k\le t_q(n)-1
\qquad\text{or}\qquad
n-t_q(n)+1\le k\le n-1.
\]
then there exists a strong differential Goppa code over $\F_q$ of length $n$ and dimension $k$ which is not a strong geometric Goppa code over $\F_q$.

In particular, over every finite field $\F_q$, the class of strong geometric Goppa codes is a proper subclass of the class of strong differential Goppa codes.
\end{theorem}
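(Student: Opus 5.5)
The argument has two halves: the obstruction for strong geometric Goppa codes, and a construction of strong differential Goppa codes with the same length and dimension.

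\textbf{Obstruction.} Proposition~\ref{prop:numerical-obstruction-Fq} says that a strong geometric Goppa code of length $n$ and dimension $k$ over $\F_q$ must satisfy $t_q(n)\le k\le n-t_q(n)$. So if $k$ lies in one of the two ranges in the statement, no such code exists, and it is enough to build a strong differential Goppa code with parameters $[n,k]$.

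The hypothesis $n\ge q+2+\lfloor 2\sqrt q\rfloor$ serves to make these ranges nonempty. It gives $n-(q+1)>2\sqrt q$, hence $t_q(n)\ge 2$, so $k=1$ lies in the first range. Since $k\le n-1$ always, the value $k=n-1$ also qualifies once $t_q(n)\ge 2$.

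\textbf{Construction.} I take $X=\mathbb P^1$, so $g=0$. I choose $L=O_{\mathbb P^1}((k-1)p_\infty)$ with $\deg L=k-1$, and a divisor $D$ of degree $n$ supported on at most $q+1$ rational points, for instance $D=n\,p_\infty$ as in \S\ref{sec:one point}. Any effective $D$ of degree $n$ would do, because multiplicities remove the constraint on the number of rational points.

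The strong condition $2g-2<\deg L<\deg D$ reads $-2<k-1<n$, which holds because $1\le k<n$. Lemma~\ref{lm:dimension Goppa} then gives
\[
\dim C(\mathbb P^1,L,D,t_D,\gamma_D)=\ell(L)-\ell(L(-D)).
\]
Here $\ell(L)=k$, and $\ell(L(-D))=0$ since $\deg L(-D)=k-1-n<0$. So the dimension is $k$. The length is $M=\deg D=n$. This matches the explicit generator matrix $[J_k\mid 0]$ of \S\ref{sec:one point}, which has rank $k$. Since the field is $\F_q$ and the uniformizer $t^{-1}$ and the trivialization $\gamma$ are defined over $\F_q$, this is a strong differential Goppa code over $\F_q$.

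\textbf{Conclusion and main obstacle.} Combining the two halves: the code constructed above is strong differential Goppa of length $n$ and dimension $k$, and by the obstruction it cannot equal any strong geometric Goppa code over $\F_q$ on any curve. For the final claim, I fix $q$ and take $n=q+2+\lfloor 2\sqrt q\rfloor$ and $k=1$, using $t_q(n)\ge 2$ as shown above. This gives a strong differential Goppa code that is not a strong geometric one, so the inclusion is proper for every $q$.

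The main subtlety is interpretive rather than computational. Proposition~\ref{prop:numerical-obstruction-Fq} rules out a code with the same parameters, and therefore rules out any code equal to (or equivalent to) ours as a strong geometric Goppa code. This is the sense of "is not a strong geometric Goppa code" needed here, so no further argument should be required.
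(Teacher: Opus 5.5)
Your proposal is correct and takes essentially the same route as the paper. You use the one-point code on $\mathbb{P}^1$ with $L=\mathcal{O}_{\mathbb{P}^1}((k-1)p_\infty)$ and $D=n\,p_\infty$, which is strong of length $n$ and dimension $k$; Proposition~\ref{prop:numerical-obstruction-Fq} then excludes it as a strong geometric Goppa code, and $n\ge q+2+\lfloor 2\sqrt q\rfloor$ forces $t_q(n)\ge 2$, so the pair $(n,1)$ is admissible.
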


\begin{proof}
Set $X=\mathbb P^1_{\F_q}$
and let $p_\infty\in X(\F_q)$ be the point at infinity. Set  $d:=k-1 $.
Since $1\le k<n$, we have  $0\le d<n $.
Now consider the one-point differential Goppa code $C=C(X,L,D,t_D,\gamma_D)$
defined by $L=\mathcal O_{\mathbb P^1}(d)$ and $D=n\,p_\infty$. Since  $g(X)=0 $, the strong condition is
$-2<d<n$,
which holds true. Hence $C$ is a strong differential Goppa code. By Riemann--Roch,
 $
\dim C=1-g+d=1+0+(k-1)=k.
 $
So $C$ has length $n$ and dimension $k$.

Suppose, for contradiction, that $C$ were a strong geometric Goppa code over $\F_q$. Since $C$ has length $n$ and dimension $k$, Proposition~\ref{prop:numerical-obstruction-Fq} would force
 $
t_q(n)\le k\le n-t_q(n).
 $
This contradicts the assumption on $k$. Hence $C$ is not a strong geometric Goppa code.

It remains to show that, for each finite field $\F_q$, the hypothesis of the theorem is nonempty for at least one pair $(n,k)$. Choose an integer $n$ such that $n\ge q+2+\lfloor 2\sqrt q\rfloor$. Then  $t_q(n)\ge 2 $. In particular, $1\le t_q(n)-1$, so the choice  $k=1 $ satisfies $1\le k\le t_q(n)-1$.
Hence the pair  $(n,1) $ is in the range covered by the theorem. Now apply the first part of the proof to this specific pair  $(n,1) $. The construction on  $\mathbb P^1_{\F_q} $ yields a strong differential Goppa code of length  $n $ and dimension  $1 $. By the numerical obstruction in Proposition~\ref{prop:numerical-obstruction-Fq}, no strong geometric Goppa code over  $\F_q $ can have these parameters. Therefore this code is not a strong geometric Goppa code. It follows that for every $\F_q$ there exists a strong differential Goppa code which is not a strong geometric Goppa code. 
\end{proof}

%-------------------------------------------------
\appendix

\section{Jets, higher order derivations and differential calculus}\label{app: jets}

Throughout this appendix, $X$ denotes a smooth projective curve over a field
$\mathbb{F}_{q}$. Most of the constructions and results presented here remain valid
for general separated schemes, possibly with minor modifications. Although the standard reference for jet bundles is \cite{EGAIV}, all the results given in the appendix can be found in \cite{Laksov1984,LaksovThorup1991,Laksov1994,PerkinsonCurves,Piene1977} in one way or another.

Let $X$ be a smooth projective curve over a field $\mathbb{F}_{q}$. The diagonal immersion 
$\Delta:X\hookrightarrow X\times X$ is closed and we denote by $I$ the ideal of $X$ in $X\times X$. Let $\pi_{1},\pi_{2}:X\times X\rightarrow X$  be the projections onto the first and the second factor respectively. 

\subsection{The trivial case. Rational functions}\label{sec:ppal_parts_ring}
\begin{definition}
Let $X$ be a smooth projective curve over a field $\mathbb{F}_{q}$.
For each natural number $i\in\mathbb{N}$  the sheaf of jets of $X$ order $i$ is the the sheaf of rings
$$
J^i_{X}=\pi_{1*}\left(O_{X\times X}/I^{i+1}\right).
$$
\end{definition}
The sheaf $J^{i}_X$ admits two different $O_X$-module (therefore, $O_X$-algebra) structures. Recall that for $l=1,2$, the structure morphism of schemes $\pi_l$ induces a homomorphism of sheaves of rings on $X$, denoted by $\pi_l^\sharp: O_X \to \pi_{l*} O_{X\times X}$.
\begin{enumerate}
    \item
    Consider the composition of $\pi_1^\sharp$ with the pushforward of the quotient map $q: O_{X\times X} \to O_{X\times X}/I^{i+1}$:
    $$ 
    \iota^{i}_X: O_X \xrightarrow{\pi_1^\sharp} \pi_{1*} O_{X\times X} \xrightarrow{\pi_{1*}(q)} \pi_{1*}(O_{X\times X}/I^{i+1}) = J^i_X.
    $$
    This ring homomorphism $\iota^{i}_{X}$ defines the first $O_X$-module structure (the standard vector bundle structure). Locally, it maps $f \mapsto f \otimes 1 \pmod{I^{i+1}}$.
    \item
    Similarly, we have the map $\pi_2^\sharp: O_X \to \pi_{2*} O_{X\times X}$. Since the sheaf $O_{X\times X}/I^{i+1}$ is supported on the diagonal $\Delta(X)$, and $\pi_1|_{\Delta(X)} = \pi_2|_{\Delta(X)}$, there is a canonical isomorphism of sheaves of rings $\phi: \pi_{2*}(O_{X\times X}/I^{i+1}) \xrightarrow{\sim} \pi_{1*}(O_{X\times X}/I^{i+1})$. We define the second map $d^i$ as the composition:
    $$ 
    d^i_X: O_X \xrightarrow{\pi_2^\sharp} \pi_{2*} O_{X\times X} \xrightarrow{\pi_{2*}(q)} \pi_{2*}(O_{X\times X}/I^{i+1}) \xrightarrow{\phi} J^i_X.
    $$
    Locally, it maps $f \mapsto 1 \otimes f \pmod{I^{i+1}}$ and it defines the second action.  The map $d^i_X$ is a morphism of $\mathbb{F}_{q}$-algebras.
\end{enumerate}
We will always be considering the $O_X$-algebra structure on $J^{i}_X$ given by $\iota^{i}_X$.

\subsubsection{{Fundamental properties}} Assume now that $X$ is a smooth projective curve over $\mathbb{F}_{q}$. The sheaves $J^{i}_{X}$ enjoy the following fundamental properties:
\begin{enumerate}
\item There is a canonical short exact sequence of $O_X$-modules for each $i\in\mathbb{N}$, $$0\rightarrow \omega_X^{\otimes i} \rightarrow J^{i}_{X}\rightarrow J^{i-1}_{X}\rightarrow 0.$$
\item If $U\subset X$ is an open subset, then $J^{i}_{X}|_{U}\simeq J^{i}_{U}$  canonically.
\item If $x\in X$ is a rational point, then $J^{i}_{X}|_{x}\simeq O_{X,x}/\mathfrak{m}^{i+1}O_{X,x}$ canonically. 
\item If $U=\textrm{Spec}(A)$ is an affine open subset, then $$J^{i}_{X}|_{U}=(A\otimes_k A)/I_U^{i+1}\simeq O_{U}^{i+1}, \ \ I_U:=\Ker(A\otimes_k A\rightarrow A),$$ 
non-canonically. A basis for $J^{i}_{X}|_{U}$ can be found as follows. Let $z\in A$ be a generator of $\omega_{X}|_U=I_U/I_U^2$. Let us denote by $\delta^{i}_{X}=d^{i}_{X}-\iota^{i}_{X}$. Define 
$$\xi:=\delta^{i}_{U}(z)=(z\otimes 1- 1\otimes z) \pmod{I^{i+1}}\in J^{i}_{X}|_{U}.$$ 
Then, $\{1,\xi,\xi^2,\hdots,\xi^{i}\}$ is a basis of $J^{i}_{X}|_{U}$ as a $O_X$-module.
\end{enumerate}
\begin{remark}\label{rmk:uniformizer basis}
For a rational point $p\in U$, the fiber $J_p^n=(J_X^n)_p\otimes_{O_{X,p}}k(p)$ is obtained by evaluating the first factor at $p$, so $\xi$ maps to
$z(p)-z=-(z-z(p))\in \mathfrak m_p/\mathfrak m_p^{n+1}$.
Since $dz(p)\neq 0$, the element $t:=z-z(p)$ is a uniformizer of $p$, and the basis $\{1,\xi,\xi^2,\hdots,\xi^{i}\}$ specializes to the basis
$\{1,t,\dots,t^n\}$ of $J_p^n$ (up to a sign).
\end{remark}

\subsubsection{{Higher derivatives and Taylor expansions of rational functions via jets}}\label{par:calculus}
Let $\mathbb{F}_{q}$ be a ring, let $A$ and $B$ $\mathbb{F}_{q}$-algebras, and  $m \in \mathbb{N} \cup \{\infty\}$. 
\begin{definition}
A {higher derivation of order $m$} from $A$ to $B$ over $\mathbb{F}_{q}$ is a sequence $(D_0, \dots, D_m)$, where $D_0 \colon A \to B$ is an $\mathbb{F}_{q}$-algebra homomorphism and $D_1, \dots, D_m \colon A \to B$ are $\mathbb{F}_{q}$-module homomorphisms such that for all $x, y \in A$ and all $l = 0, \dots, m$:
    \begin{equation}
        D_l(xy) = \sum_{i+j=l} D_i(x)D_j(y) \ \ \textrm{(Leibniz rule) }.
    \end{equation}
\end{definition}
We denote by $\textrm{Der}^{m}_{k}(A,B)$ the set of higher derivations of order $m$ from $A$ to $B$.

The $\mathbb{F}_{q}$-linear map $d^{i}_{X}$ is intimately related to higher order derivations of functions on the curve in the following sense. We restrict ourselves to an affine open subset $U=\textrm{Spec}(A)$ of the smooth projective curve $X$  and we pick $z\in A$ such that $\xi:=\delta^{i}_{U}(z)$ generates  $\omega_{X}|_U$. Then, given $a\in A$, we have 
$$
d^{i}_{U}(a)=D_{U,z}^{0}(a)+D_{U,z}^{1}(a)\xi+\cdots +D_{U,z}^{i}(a)\xi^{i}
$$
where 
\begin{equation}\label{eq:higher derivative of function}
D_{U,z}^{j}(a) = \left[ 
\begin{array}{l}
\textrm{$j$-th coefficient of $d^{i}_{U}(a)$} \\
\textrm{in the basis $\{1,\xi,\hdots,\xi^{i}\}$}
\end{array}
\right].
\end{equation}
Since $d^{i}_{U}$ is an $\mathbb{F}_{q}$-algebra homomorphism, it follows that each $D_{U,z}^{j}$ determines map $D_{U,z}^{j}:A\rightarrow A$ which is $\mathbb{F}_{q}$-linear and satisfies
$$
D^{j}_{U,z}(ab)=\sum_{l=0}^{j}D_{U,z}^{l}(a)\,D_{U,z}^{j-l}(b),
\qquad j=0,\dots,i.
$$
That is, $(D_{U,z}^{0},\hdots,D_{U,z}^{i})\in\textrm{Der}^{i}_{k}(A,A)$. There is an important fact to highlight: each $D_{U,z}^{j}$ above is a differential operator in the sense of Grothendieck; that is, each $D_{U,z}^{j}:A\rightarrow A$ is equal to $pr_j \circ d_{U}^{i}$ where $pr_j:J^{i}_{U}\rightarrow A$ is the projection onto the $j$-th factor. In particular, $D_{U,z}^{1}$ is an ordinary derivation.

Finally, note that the canonical surjections $J^{i}_{X}\rightarrow J^{i-1}_{X}$ determine a projective system and we may define 
\begin{equation*}
J^{\infty}_{X}:=\varprojlim_i J_{X}^{i}.
\end{equation*}
The canonical maps $d_X^{i}$ induce a map
$$
d^{\infty}_X: O_X\rightarrow  J^{\infty}_{X}
$$
which is locally described by the full Taylor sequence. More precisely, given an affine open subset $U=\textrm{Spec}(A)$ and $z\in A$ such that $\xi:=\delta^{i}_{U}(z)$ generates  $\omega_{X}|_U$, then
$$
d^{\infty}_{U}(a)=D_{U,z}^{0}(a)+D_{U,z}^{1}(a)\xi+\cdots +D_{U,z}^{i}(a)\xi^{i}+\cdots
$$

\subsubsection{Comparison with the classical Taylor expansion}
We refer the reader to \cite[\S 4.2]{Stichtenoth} for a detailed exposition of Taylor expansions in function fields.

Let $x: K\rightarrow \mathbb{Z}\cup\{\infty\}$, be a discrete $\mathbb{F}_{q}$-valuation and $O_{x}\subset K$ the corresponding discrete valuation ring. The $\mathfrak{m}_{x}$-adic completion is defined by
$$
\widehat{O}_{x}:=\varprojlim_{n}O_{x}/\mathfrak{m}_{x}^n
$$
Clearly, the canonical morphism of $O_x$-modules $O_{x}\rightarrow\widehat{O}_{x}$ is injective. Therefore, the induced morphism between their fields of fractions
$$
\mathfrak{l}_{t_x}:K\hookrightarrow K_{x}
$$
is injective as well. Given a parameter at $x$, $t_x\in\mathfrak{m}_{x}\setminus\mathfrak{m}_{x}^2$, it is well known that $K_{x}\simeq \mathbb{F}_{q}((t_x))$. Thus, given $f\in K$, we define the Laurent expansion of $f$ at $x$ respect to $t_x$ as
\begin{equation}\label{eq:laurent expansion}
f(t_x):=\mathfrak{l}_{t_x}(f)=\sum_{n\in\mathbb{Z}}f_{n}t_{x}^{n}\in \mathbb{F}_{q}((t_x)).
\end{equation}
It is easy to see that the order of $f$ at $x$ satisfies:
$$
\textrm{ord}_x(f)=\textrm{min}\{n\in\mathbb{N}| \ f_n\neq 0\}
$$
If $f$ is regular at $x$, that is, $f\in O_{x}$, then
$$
f(t_x)=\sum_{n=0}^{\infty}f_{n}t_{x}^{n}\in \mathbb{F}_{q}[[t_x]]\footnote{Note that, fixed $t_x$, threre are canonical isomorphisms $\mathbb{F}_{q}[[t_x]]/t_{x}^{i+1}\simeq \bigoplus_{n=0}^{i}\mathfrak{m}_{x}^{n}/\mathfrak{m}_{x}^{n+1}$ and, therefore, a canonical isomorphism $\mathbb{F}_{q}[[t_x]]\simeq \varprojlim_i \bigoplus_{n=0}^{i}\mathfrak{m}_{x}^{n}/\mathfrak{m}_{x}^{n+1}$. This can be used to compute the derivatives.}
$$
and we define the {derivative of order $n$ of $f$ at $x$ respect to $t_x$} as
\begin{equation}\label{eq:derivative at a point}
D_{t_{x}}^{n}(f):=f_n.
\end{equation}
In order to justify this definition, note that, $\mathfrak{l}_{t_x}$ being a morphism of $\mathbb{F}_{q}$-algebras, it holds
\begin{enumerate}
\item $D_{t_{x}}^{0}(f)=f(x)$ for all $f\in O_x$\footnote{Note that, $O_x$ being discrete $\mathbb{F}_{q}$-valuation ring imples, in particular, that $\mathbb{F}_{q}\subset O_x$.Therefore, the composition $\mathbb{F}_{q}\subset O_x\rightarrow O_x/\mathfrak{m}_x$ determines an isomorphism $\mathbb{F}_{q}\simeq O_x/\mathfrak{m}_x$ for every rational point $x\in X$.}.
\item $D_{t_{x}}^{n}(\lambda f+\mu g)=\lambda D_{t_{x}}^{n}(f)+\mu D_{t_{x}}^{n}(g)$ for all $\lambda,\mu\in\mathbb{F}_{q}$ and $f,g\in O_x$.
\item (Leibniz rule) $D_{t_{x}}^{n}(f g)=\sum_{i=0}^{n}D_{t_{x}}^{i}(f) D_{t_{x}}^{n-i}(g)$ for all $f,g\in O_{x}$.
\end{enumerate}
As we have seen in \ref{par:calculus}, each truncated tuple $(D_{t_{x}}^{0},D_{t_{x}}^{1},\hdots, D_{t_{x}}^{n})$ determines a higher order derivation. In particular, $D_{t_{x}}^{1}$ is an ordinary derivation. We will also use the classical notation 
\begin{equation}\label{eq:classical derivative}
f'(t_x)=D_{t_{x}}^{1}(f).
\end{equation}
Finally, note that if $f$ has a pole at $x$ of order $n$ then, for each $n\geq i>0$, we have
\begin{equation}\label{eq:negative coeff derivatives}
a_{-i}=D_{t_x}^{n-i}(t_x^n f)
\end{equation}

\subsection{The general case. Meromorphic sections}
Let $X$ be a  smooth projective curve over a field $\mathbb{F}_{q}$.
Consider now an invertible sheaf $L$ on $X$. Then, the sheaf of jets of $L$ of order $i$ is defined as
\begin{equation}\label{eq:n-jets}
J^i_{X}(L):=\pi_{1*}\left(\pi_{2}^{*}L\otimes_{O_{X\times X}}O_{X\times X}/I^{i+1}\right).
\end{equation}
This sheaf carries a natural structure of module over the algebra of jets $P^i_X := J^i_X(O_X)$. Consequently, the two $O_X$-module structures on $P^i_X$ induce two compatible structures on $J^i_{X}(L)$. 

Unless stated otherwise, we will consider on $J^i_{X}(L)$ the $O_X$-module structure deduced from the left homomorphism $\iota^{i}: O_X \to P^i_X$. \subsubsection{{Fundamental properties}:} 
Assume now that $X$ is a smooth projective curve over $\mathbb{F}_{q}$.
The sheaves $J^{i}_{X}(L)$ enjoy the fundamental properties analogous to $J^{i}_X$:
\begin{enumerate}
\item There is a canonical short exact sequence of $O_X$-modules, 
\begin{equation}\label{eq:fundamental exact sequence}
0\rightarrow \omega_X^{\otimes i}\otimes L \rightarrow J^{i}_{X}(L)\rightarrow J^{i-1}_{X}(L)\rightarrow 0.
\end{equation}
\item If $U\subset X$ is an open subset, then $J^{i}_{X}(L)|_{U}\simeq J^{i}_{U}(L|_U)$  canonically.
\item If $x\in X$ is a rational point, then $J^{i}_{X}(L)|_{x}\simeq L_{x}/\mathfrak{m}^{i+1}L_{x}$ canonically. 
\item If $U=\textrm{Spec}(A)$ is an affine open subset trivializing $L$ and $\gamma:L|_U\simeq O_X|_U$ is a trivialization, then 
$$J^{i}_{X}(L)|_{U}\simeq J^{i}_{X}(L|_{U})\overset{J^{i}(\gamma)}{\simeq} J^{i}_{X}|_{U} \simeq O_{U}^{i+1}$$ 
non-canonically. This isomorphism will be denoted by $J^{i}(\gamma)$ as well.
\item The canonical surjection $\pi_2^*L\rightarrow\pi_2^*L\otimes O_{X\times X}/I^{i+1}$ induces by adjunction a morphism of $O_X$-modules $L\rightarrow \pi_{2*}(\pi_2^{*} L\otimes O_{X\times X}/I^{i+1})$. Since $\pi_2^{*} L\otimes O_{X\times X}/I^{i+1}$ is supported in the diagonal, there is a canonical $\mathbb{F}_{q}$-module isomorphism $ \pi_{2*}(\pi_2^{*} L\otimes O_{X\times X}/I^{i+1})\simeq  \pi_{1*}(\pi_2^{*} L\otimes O_{X\times X}/I^{i+1})$. All together gives a $\mathbb{F}_{q}$-linear map
$$
d^{i}_{L}:=L\rightarrow J^{i}_{X}(L).
$$
\end{enumerate}
\begin{definition}\label{def:truncated Taylor map}
The $i$-th truncated Taylor sequence map of $L$ is the $\mathbb{F}_{q}$-linear map $d^{i}_{L}$ defined above. 
\end{definition}
It is important to hilight that the canonical isomorphism mentioned above,
\[
J_X^{i}(L)\big|_{x} \;\simeq\; L_x/\mathfrak{m}_x^{i+1}L_x = L/L(-(i+1)x),
\]
makes the truncated Taylor sequence map $d_L^{i}:L\to J_X^{i}(L)$ composed with the restriction morphism $J_X^{i}(L)\rightarrow J_X^{i}(L)|_x$ to coincide with the natural quotient map
$$L\to L/L(-(i+1)x).$$

\subsubsection{{Derivatives of sections of invertible sheaves}}\label{sec: variations}

A key property of the truncated Taylor sequence map is that for every open subset, $U\subset X$, we have a commutative square 
\begin{equation}\label{eq: Derivatives of sections of invertible sheaves}
\xymatrix{
L|_U \ar[r]^{d^{i}_{L}|_U} \ar[d]_{\gamma} & J^{i}(L)|_U  \ar[r]^{\textrm{canonical}} & J^{i}(L|_U) \ar[d]^{J^{i}(\gamma)} \\
O_U \ar[rr]_{d^{i}_{U}} & &  J^{i}_U 
}
\end{equation}
Therefore, given a local section $s\in L|_U$, and a generator, $z\in\omega_{X}|_U$, we have
$$
d^{i}_{L}|_U(s)=D_{U,z}^{0}(\gamma(s))+D_{U,z}^{1}(\gamma(s))\overline{\xi}+\cdots +D_{U,z}^{i}(\gamma(s))\overline{\xi^{i}}
$$
with $\overline{\xi^j}:=J^{i}(\gamma)^{-1}(\xi^{j})$. 
\begin{definition}\label{def: derivative of sections}
We define the $j$-th derivative of $s$ with respect to the trivialization $(U,\gamma)$ and the parameter $z$ as
\begin{equation}\label{eq: derivative of local section}
D^{j}_{U,\gamma,z}(s):=D^{j}_{U,z}(\gamma(s))
\end{equation}
\end{definition}

The dependency of $D^{j}_{U,\gamma,z}$ with respect to $U$, $\gamma$ and $z$ (assuming $U=\textrm{Spec}(A)$) can be described as follows:
\begin{enumerate}
\item Change of trivialization: Let $\gamma':L|_{U}\simeq O_U$ be another trivialization. Then, there is a rational function $f\in O_U$ such that $\gamma'=f\gamma$. So,
\begin{equation}\label{eq: trivialization derivation}
D^j_{U,\gamma',z}(s)=D^j_{U,z}(\gamma'(s))=D^j_{U,z}(f\gamma(s))=\sum_{t=0}^{j}D^{j-t}_{U,z}(f)D^t_{U,z,\gamma}(s)
\end{equation}
\item Change of parameter (Faà di Bruno Formula): Fix now another generator $w\in A$ of $\omega_X|_U$ and set $\theta:=\delta_{U}^{i}(w)\in J^{i}_U$. With respect to the basis $\{1,\theta,\hdots,\theta^{i}\}$, we have 
$$\xi=\sum_{l=0}^{i}D_{U,w}^{l}(z)\theta^{l}.$$ 
On the other hand, the two different expressions of $d^{i}_{L}|_U(s)$ must coincide. That is, we have the equality
$$
\sum_{j=0}^{i}D^j_{U,z,\gamma}(s)\xi^j=\sum_{j=0}^{i}D^j_{U,w,\gamma}(s)\theta^j
$$
Combining the above equalities yields, for $m=0,\dots,i$,
\begin{equation}\label{eq: faa di bruno}
\begin{split}
D^{m}_{U,w,\gamma}(s)
&=
\sum_{j=0}^{m}
D^{j}_{U,z,\gamma}(s)
\left(
\sum_{\substack{l_1+\cdots+l_j=m \\ 1 \le l_\alpha \le m}}
\prod_{\alpha=1}^{j} D^{\,l_\alpha}_{U,w}(z)
\right)=\\
&=
\sum_{j=0}^{m}
D^{j}_{U,z,\gamma}(s)
\left(
\sum_{\substack{(n_1,\dots,n_m)\in\mathbb{N}^m\\
\sum_{r=1}^m r n_r = m\\[2pt]
\sum_{r=1}^m n_r = j}}
\binom{j}{n_1,n_2,\dots,n_m}
%\frac{j!}{\prod_{r=1}^i n_r!}\;
\prod_{r=1}^m \big(D^{r}_{U,w}(z)\big)^{n_r}
\right).
\end{split}
\end{equation}
\item Change of open subset: Regarding the dependency on $U$ it is worth to point out that $D^{j}_{U,\gamma,z}(s)\in A$ and if $V=\textrm{Spec}(B)\subset U$ (here $B=A_V$), then $D^{j}_{V,\gamma,z}(s|_U)$ is nothing but the localization
\begin{equation}\label{eq: localization of derivation}
D^{j}_{V,\gamma,z}(s|_U)=\dfrac{D^{j}_{U,\gamma,z}(s)}{1}\in B,
\end{equation}
A particularly important consequence is that, given a rational point $x\in U\subset X$, the evaluation at $x$ of the rational function $D_{U,\gamma,z}^{j}(s)$ is obviously
$$
D_{U,\gamma,z}^{j}(s)(x)=
\left[
\begin{array}{l}
\textrm{coefficient of order $j$ of the Taylor sequence of $\gamma(s)$}\\
\textrm{at $x\in X$ respect to the uniformizer $t_x:=\dfrac{z}{1}\in O_x$}
\end{array}
\right].
$$
That is, 
\begin{equation}\label{eq: coincidence derivatives}
D_{U,\gamma,z}^{j}(s)(x)=D^n_{t_x}(\gamma(s))
\end{equation} 
as defined in \ref{eq:derivative at a point}.
\end{enumerate}

\begin{example}
Let $\mathbb{F}_{q},K,X$ be as above, $G=\sum_{x\in X}n_{x}\cdot x$ a divisor of $X$ and $O_X(G)$ the invertible sheaf associated to $G$.
Let us see now how to trivialize $O_X(G)$ around a rational points $x\in X$. We distinguish two situations:
\begin{enumerate}
\item If $x\notin \textrm{supp}(G)$ and we set $U_x:=X\setminus \textrm{supp}(G)$, then $O_X(G)(U_x)=\{f\in K| \ (f)_{U_x}\geq 0\}\cup\{0\}= {O}_{X}(U_x)$. Thus, $O_X(G)|_{U_{x}}$ is indeed trivial.
\item If $x\in\textrm{supp}(G)$, let $t_{x}\in K$ be a uniformizer of the point $x$. Let us set $U_{x}=X\setminus\{\textrm{poles of $t_{x}$}\}$. For any $f\in O_X(G)(U_{x})$, consider the element $t_{x}^{n_{x}}\cdot f\in K$. Note that $(t_{x}^{n_{x}}\cdot f)\geq 0$, so $t_{x}^{n_{x}}\cdot f\in {O}_{X}(U_{x})$. The morphism 
\begin{equation}\label{eq: trivialization invertible sheaf}
\gamma: O_X(G)(U_{x})\rightarrow {O}_{X}(U_{x})
\end{equation}
given by $f\mapsto t_{x}^{n_{x}}\cdot f$ is an isomorphism, that is, a trivialization over $U_{x}$.
\end{enumerate}
Assume we are at the second situation. Set $V_x:=U_x\setminus \{\textrm{ zeroes of $t_x$} \}$. Then, $\gamma:O_{X}(G)(V_x)\simeq O_{X}(V_x)$ is a trivialization and $dt_x$ generates $\omega_X(V_x)$. Then, the value at $x$ of the derivative $D_{U,\gamma,z}^{j}(f)$ is just $D^n_{t_x}(t_x^{n_x}f)$.
\end{example}

\bibliographystyle{amsplain}

\bibliography{biblio}

\end{document}